\title{Abelian covers of surfaces and the homology of the level $L$ mapping class group}
\author{Andrew Putman\footnote{Supported in part by NSF grant DMS-1005318}}
\theoremstyle{plain}
\newtheorem{theorem}{Theorem}[section]
\newtheorem{maintheorem}{Theorem}
\newtheorem{lemma}[theorem]{Lemma}
\newtheorem{claim}{Claim}
\newtheorem{step}{Step}
\newcommand\BeginClaims{\setcounter{claim}{0}}
\newcommand\BeginSteps{\setcounter{step}{0}}
\theoremstyle{definition}
\newtheorem*{definition}{Definition}
\theoremstyle{remark}
\newtheorem*{remark}{Remark}
\DeclareMathOperator{\Ker}{ker}
\DeclareMathOperator{\Image}{Im}
\DeclareMathOperator{\Mod}{Mod}
\DeclareMathOperator{\Sp}{Sp}
\DeclareMathOperator{\SL}{SL}
\newcommand\C{\ensuremath{\mathbb{C}}}
\newcommand\Z{\ensuremath{\mathbb{Z}}}
\newcommand\Q{\ensuremath{\mathbb{Q}}}
\DeclareMathOperator{\HH}{H}
\DeclareMathOperator{\Min}{min}
\newcommand\Span[1]{\ensuremath{\langle #1 \rangle}}
\newcommand\CaptionSpace{\hspace{0.2in}}
\DeclareMathOperator{\Dim}{dim}
\newcommand\Set[3][]{\ensuremath{\{\text{#2 $|$ \pbox[t]{\textwidth}{#3$\}$#1 }}}}
\newcommand\Figure[3]{
\begin{figure}[t]
\centering
\centerline{\psfig{file=#2,scale=60}}
\caption{#3}
\label{#1}
\end{figure}}
\newcommand\KRHO[1]{\ensuremath{\llbracket #1 \rrbracket}}
\newcommand\CAB[1]{\ensuremath{\overline{#1}}}
\newcommand\CL[2]{\ensuremath{\langle\!\!\langle #1 \rangle\!\!\rangle^{#2}}}
\newcommand\CLL[1]{\ensuremath{\langle\!\!\langle #1 \rangle\!\!\rangle}}
\newcommand\BBB[1]{\ensuremath{\mathcal{B}_{#1}}}
\newcommand\KKK[1]{\ensuremath{K_{#1}}}
\newcommand\CCC[1]{\ensuremath{C_{#1}}}
\newcommand\QQQ[1]{\ensuremath{Q_{#1}}}
\newcommand\SSS{\ensuremath{\mathcal{S}_g}}
\newcommand\SSSS[1]{\ensuremath{\mathcal{S}_g(#1)}}
\newcommand\SKER{\ensuremath{\mathcal{K}_g}}
\newcommand\III[1]{\ensuremath{I_{#1}}}
\newcommand\CVR[1]{\ensuremath{S^K_{#1}}}
\newcommand\REID[2]{\ensuremath{\omega(#1,#2)}}
\begin{document}

\maketitle

\begin{abstract}
We calculate the first homology group of the mapping class group with coefficients in the first rational homology group
of the universal abelian $\Z / L$-cover of the surface.  If the surface has one marked point, then the answer
is $\Q^{\tau(L)}$, where $\tau(L)$ is the number of positive divisors of $L$.  If the surface instead has
one boundary component, then the answer is $\Q$.  We also perform the same calculation for the level $L$ subgroup
of the mapping class group.  Set $H_L = \HH_1(\Sigma_g;\Z/L)$.
If the surface has one marked point, then the answer is $\Q[H_L]$, the rational group ring of $H_L$.  
If the surface instead has one boundary component, then the answer is $\Q$.
\end{abstract}

\section{Introduction}

Let $\Sigma_{g,b}^n$ be an oriented genus $g$ surface with $b$ boundary components and $n$ marked points and let
$\Mod_{g,b}^n$ be its {\em mapping class group}.  This is the group of homotopy classes of orientation-preserving
diffeomorphisms of $\Sigma_{g,b}^n$ that act as the identity on the boundary components and marked points.  We
will usually omit the $b$ and the $n$ if they vanish.  The homology groups of $\Mod_{g,b}^n$, which play important
roles in both algebraic geometry and low-dimensional topology, have been studied intensely for the past 40 years.  The
culmination of much recent work is the resolution of the Mumford conjecture by Madsen and Weiss \cite{MadsenWeiss},
which identifies $\HH^{\ast}(\Mod_{g,b}^n;\Q)$ in a stable range.

\paragraph{Twisted coefficient systems.}
For many applications, it is important to know the homology groups of $\Mod_{g,b}^n$ with respect to various twisted 
coefficient systems.  For simplicity, assume that $(b,n) \in \{(0,0),(1,0),(0,1)\}$.  
A lot is known about coefficient systems that factor through the standard {\em symplectic
representation} of $\Mod_{g,b}^n$.  This is the natural representation $\Mod_{g,b}^n \rightarrow \Sp_{2g}(\Z)$ that arises 
from the action of $\Mod_{g,b}^n$ on $\HH_1(\Sigma_{g,b}^n;\Z)$.  Its target 
is the symplectic group because the action preserves the algebraic intersection form.  For any rational
representation $V$ of the algebraic group $\Sp_{2g}$, Looijenga \cite{LooijengaTwisted} has completely determined 
$\HH^{\ast}(\Mod_{g};V)$ as a module over $\HH^{\ast}(\Mod_{g};\Q)$ in a stable range.  Over $\Z$, a bit less is known.
Morita \cite{MoritaFamilies} has calculated $\HH_1(\Mod_{g,b}^n;\HH_1(\Sigma_{g,b}^n;\Z))$ for $g \geq 3$.  For $b \geq 1$,
this was later generalized by Kawazumi \cite{Kawazumi}, who calculated 
$\HH^{\ast}(\Mod_{g,b}^n;(\HH^1(\Sigma_{g,b}^n;\Z))^{\otimes k})$ as a module over $\HH^{\ast}(\Mod_{g,b}^n;\Z)$
in a stable range.

Fix some $L \geq 2$.  
In this paper, we calculate the first homology group of the mapping class group with coefficients in the first rational
homology group of the universal abelian $\Z/L$-cover of the surface (see below for the definition).
We remark that this representation does {\em not}
factor through $\Sp_{2g}(\Z)$.  
Our techniques also give results for certain finite-index
subgroups of the mapping class group.  These results play an important technical role in a
recent pair of papers by the author \cite{PutmanSecondHomologyLevel, PutmanPicard}
that study the second cohomology group and Picard group
of the moduli space of curves with level $L$ structures.

\paragraph{Universal abelian $\Z/L$-cover.}
Let $\KKK{g}$
be the kernel of the natural map $\pi_1(\Sigma_g) \rightarrow \HH_1(\Sigma_g;\Z/L)$.  The group $\KKK{g}$ is the
fundamental group of the universal abelian $\Z/L$-cover of $\Sigma_g$.  Since $\Mod_{g}^1$ fixes a basepoint
on $\Sigma_g$, it acts on $\pi_1(\Sigma_g)$.  This action preserves $\KKK{g}$.  We thus obtain an action
of $\Mod_{g}^1$ on $\HH_1(\KKK{g};\Q)$, the first homology group of the universal abelian $\Z/L$-cover of $\Sigma_g$.  
This representation has been previously studied by Looijenga \cite{LooijengaPrym}, who essentially determined its image.

\begin{remark}
In \cite{LooijengaPrym}, Looijenga more generally studied the actions of appropriate finite-index subgroups of $\Mod_{g}^1$
on the first rational homology groups $V$ of arbitrary finite abelian covers of $\Sigma_{g}$.  Letting $\Mod_{g}^1(L)$
denote the level $L$ subgroup of $\Mod_g^1$ (see below), we can choose $L$ so that $\Mod_{g}^1(L)$ acts on $V$.  It
then follows from Lemma \ref{lemma:decomposecover} below that $V$ appears a direct summand in the $\Mod_{g}^1(L)$-module
$\HH_1(\KKK{g};\Q)$, so one can use our results to study $V$ as well.
\end{remark}

\paragraph{Statements of theorems.}
Let $\tau(L)$ be the number of positive divisors of $L$ (including $1$ and $L$).  Our first theorem
is as follows.

\begin{maintheorem}
\label{theorem:mainmcg}
For $g \geq 4$ and $L \geq 2$, we have $\HH_1(\Mod_{g}^1;\HH_1(\KKK{g};\Q)) \cong \Q^{\tau(L)}$.
\end{maintheorem}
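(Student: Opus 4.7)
The plan is to reduce Theorem A to a companion statement for the level $L$ subgroup $\Mod_{g}^1(L)$ (the kernel of $\Mod_{g}^1 \twoheadrightarrow \Sp_{2g}(\Z_L)$), and then to use the finiteness of $\Sp_{2g}(\Z_L)$ together with the rationality of coefficients to descend back to $\Mod_{g}^1$.

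The main step is to establish the companion calculation
$$\HH_1(\Mod_{g}^1(L);\HH_1(\KKK{g};\Q)) \cong \Q[H_L]$$
as $\Sp_{2g}(\Z_L)$-modules, where $\Sp_{2g}(\Z_L)$ acts on $\Q[H_L]$ by the natural permutation action on $H_L$. Because $\Mod_{g}^1(L)$ acts trivially on $H_L$ by definition, it preserves the $\chi$-eigenspace decomposition of $\HH_1(\KKK{g};\overline{\Q})$ as $\chi$ ranges over characters of $H_L$. By Lemma \ref{lemma:decomposecover} each such eigenspace sits inside $\HH_1$ of an intermediate finite abelian cover of $\Sigma_g$, and the representations of finite-index subgroups of $\Mod_{g}^1$ on those covers are controlled by Looijenga's work \cite{LooijengaPrym}. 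Computing $\HH_1(\Mod_{g}^1(L); -)$ on each eigenspace and then repackaging Galois-conjugate characters into $\Q$-rational summands should assemble into the permutation module $\Q[H_L]$; the hypothesis $g \geq 4$ is needed to invoke stability-type results and to rule out extraneous contributions.

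Once the level $L$ calculation is available, Theorem A follows from the Hochschild-Serre spectral sequence associated to
$$1 \longrightarrow \Mod_{g}^1(L) \longrightarrow \Mod_{g}^1 \longrightarrow \Sp_{2g}(\Z_L) \longrightarrow 1.$$
Because $\Sp_{2g}(\Z_L)$ is finite and all coefficients live in $\Q$-vector spaces, $\HH_p(\Sp_{2g}(\Z_L); V) = 0$ for every $p \geq 1$. The spectral sequence therefore collapses in total degree $1$ and gives
$$\HH_1(\Mod_{g}^1;\HH_1(\KKK{g};\Q)) \cong \HH_1(\Mod_{g}^1(L);\HH_1(\KKK{g};\Q))_{\Sp_{2g}(\Z_L)} \cong \Q[H_L]_{\Sp_{2g}(\Z_L)}.$$
The coinvariants of the permutation module $\Q[H_L]$ have $\Q$-dimension equal to the number of $\Sp_{2g}(\Z_L)$-orbits on $H_L \cong (\Z_L)^{2g}$. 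A standard orbit analysis for the symplectic group identifies these orbits with the divisors $d \mid L$ (the orbit attached to $d$ being the set of elements of order exactly $d$), giving exactly $\tau(L)$ orbits and hence the stated isomorphism.

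The principal obstacle is the level $L$ computation in the second paragraph: pinning down $\HH_1(\Mod_{g}^1(L); -)$ on each character eigenspace, descending correctly from $\overline{\Q}$ to $\Q$ across Galois orbits of characters, and verifying that the $\Sp_{2g}(\Z_L)$-equivariance assembles exactly to the permutation representation $\Q[H_L]$ rather than a twisted or enlarged variant. Everything after that is formal.
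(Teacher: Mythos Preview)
Your reduction of Theorem~A to the level-$L$ calculation is exactly what the paper does: invoke the Hochschild--Serre collapse (the paper packages this as Lemma~\ref{lemma:supertransfer}) to get
$\HH_1(\Mod_{g}^1;\HH_1(\KKK{g};\Q)) \cong (\Q[H_L])_{\Sp_{2g}(\Z_L)}$,
and then count $\Sp_{2g}(\Z_L)$-orbits on $H_L$. Your orbit parametrization by the order of the element is equivalent to the paper's parametrization by the divisor $c$ in $v=cv'$ with $v'$ primitive. So the formal part of your argument is correct and matches the paper.

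The gap is where you say it is, and it is substantial. Looijenga's paper \cite{LooijengaPrym} identifies the \emph{image} of the Prym representations; it does not compute $\HH_1(\Mod_{g}^1(L);V_\chi)$ for the nontrivial eigenspaces $V_\chi$, and there is no simple passage from one to the other. You would need, for each nontrivial character $\chi$ of $H_L$, to prove that $\HH_1(\Mod_{g}^1(L);V_\chi)$ is one-dimensional over the relevant cyclotomic field, and that the $\Sp_{2g}(\Z_L)$-action permutes these lines compatibly with its permutation of characters. Neither step is available in the literature you cite, and the ``should assemble'' sentence is doing all the work.

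The paper takes a completely different route to the level-$L$ statement, avoiding the eigenspace decomposition entirely. It splits off $\HH_1(\Sigma_g;\Q)$ using the transfer (Lemma~\ref{lemma:decomposecover}) and handles that summand by Hain's Theorem~\ref{theorem:h1modlh}. For the complementary piece $\CCC{g}$, it passes to $\Mod_{g,1}(L)$ and exploits the short exact sequence $0\to\III{g}\to\CCC{g,1}\to\CCC{g}\to 0$ of $\Mod_{g,1}(L)$-modules, where $\III{g}$ is identified with the augmentation ideal of $\Q[H_L]$ (Lemma~\ref{lemma:identifyi}). The heart of the argument is then the vanishing $\HH_1(\Mod_{g,1}(L);\CCC{g,1})=0$ (Theorem~\ref{theorem:mainboundary2}), proved by analyzing the Birman exact sequence and the connecting map $\partial:\HH_1(\KKK{g};\CCC{g})\to\III{g}$ via an explicit generating set $\SSS$. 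This is a hands-on topological computation rather than a representation-theoretic decomposition, and it is what makes the level-$L$ result go through.
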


In fact, our proof of Theorem \ref{theorem:mainmcg} also gives a result for the 
{\em level $L$ subgroup} of $\Mod_g^1$, denoted $\Mod_{g}^1(L)$.  This is the kernel of the action of $\Mod_{g}^1$ on
$\HH_1(\Sigma_{g}^1;\Z/L)$.  Far less is known about its homology.  The only previous result of which
the author is aware is a paper of Hain \cite{HainTorelli} that calculates $\HH_1(\Mod_{g,b}^n(L);V)$ for
rational representations $V$ of the algebraic group $\Sp_{2g}$.  Our theorem is as follows.

\begin{maintheorem}
\label{theorem:mainclosed}
For $g \geq 4$ and $L \geq 2$, we have
$\HH_1(\Mod_{g}^1(L);\HH_1(\KKK{g};\Q)) \cong \Q[H_L]$,
where $H_L$ equals $\HH_1(\Sigma_g;\Z/L)$ and $\Q[H_L]$ is the rational group ring 
of the abelian group $H_L$.
\end{maintheorem}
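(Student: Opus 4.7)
Write $V = \HH_1(\KKK{g};\Q)$. The plan is to equip $\HH_1(\Mod_g^1(L); V)$ with a natural $\Q[H_L]$-module structure and then show that this module is isomorphic to the regular representation $\Q[H_L]$ by analyzing each character-eigenspace separately. The $\Q[H_L]$-action arises from the deck transformations of the $\Z_L$-cover $\widetilde{\Sigma}_g \to \Sigma_g$: these yield an action of $H_L$ on $V$, and this action commutes with the action of $\Mod_g^1(L)$ because $\Mod_g^1(L)$ acts trivially on $H_L$ by definition.

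The first step is to apply Lemma \ref{lemma:decomposecover} to decompose $V = \bigoplus_{\chi \in \widehat{H_L}} V_\chi$ into the $\chi$-isotypic subspaces under $H_L$, each of which is a $\Mod_g^1(L)$-submodule. Because the two actions commute and the summation has finitely many terms, this yields a decomposition
\[
\HH_1(\Mod_g^1(L); V) \;=\; \bigoplus_{\chi \in \widehat{H_L}} \HH_1(\Mod_g^1(L); V_\chi)
\]
of $\Q[H_L]$-modules, with $H_L$ acting via $\chi$ on the $\chi$-summand. The problem therefore reduces to showing $\HH_1(\Mod_g^1(L); V_\chi) \cong \Q$ for every character $\chi$: once that is established, the direct sum is a $|H_L|$-dimensional space in which every character of $H_L$ appears with multiplicity one, which is precisely the regular representation $\Q[H_L]$.

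For the trivial character, $V_1 = \HH_1(\Sigma_g;\Q)$ is the standard symplectic representation, and the required identity $\HH_1(\Mod_g^1(L); \HH_1(\Sigma_g;\Q)) \cong \Q$ is a special case of Hain's calculation cited in the introduction. For a non-trivial character, $V_\chi$ is one of the Prym-type representations of $\Mod_g^1(L)$ classified by Looijenga, and this per-character computation is the main obstacle. I expect the argument to mirror the proof of Theorem \ref{theorem:mainmcg}: produce an explicit generator via an abelian cycle associated to a Dehn twist about a simple closed curve whose lift to the cover realizes a nonzero class in $V_\chi$, and then bound the dimension from above by analyzing a suitable generating set for $\Mod_g^1(L)$ and using the known action of the Torelli subgroup on $V_\chi$.

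As a consistency check, Theorem \ref{theorem:mainmcg} is recovered from Theorem \ref{theorem:mainclosed} by taking $\Sp_{2g}(\Z_L)$-coinvariants: because $\Sp_{2g}(\Z_L)$ is finite, the Hochschild--Serre spectral sequence for the extension $1 \to \Mod_g^1(L) \to \Mod_g^1 \to \Sp_{2g}(\Z_L) \to 1$ collapses over $\Q$ to give
\[
\HH_1(\Mod_g^1; V) \;=\; \bigl(\HH_1(\Mod_g^1(L); V)\bigr)_{\Sp_{2g}(\Z_L)}.
\]
The $\Sp_{2g}(\Z_L)$-coinvariants of $\Q[H_L]$ are spanned by the orbit sums for the action of $\Sp_{2g}(\Z_L)$ on $H_L$, and for $g \geq 2$ these orbits are in natural bijection with the divisors of $L$, producing the $\Q^{\tau(L)}$ of Theorem \ref{theorem:mainmcg}.
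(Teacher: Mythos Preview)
Your proposal has a genuine gap: the heart of the computation---showing $\HH_1(\Mod_g^1(L); V_\chi) \cong \Q$ for each nontrivial character $\chi$---is not actually carried out. You write that you ``expect the argument to mirror the proof of Theorem \ref{theorem:mainmcg},'' but in the paper Theorem \ref{theorem:mainmcg} is \emph{deduced from} Theorem \ref{theorem:mainclosed} (precisely via the coinvariant argument you present as a consistency check), so there is no independent proof of Theorem \ref{theorem:mainmcg} to mirror. The sketch involving ``an abelian cycle associated to a Dehn twist'' and ``analyzing a suitable generating set'' is not a proof; this is in fact where almost all of the work in the paper lies.

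There is also a technical issue with the decomposition. Characters of $H_L$ take values in $\mu_L \subset \C$, not in $\Q$, so $V$ does not split as $\bigoplus_\chi V_\chi$ over $\Q$; over $\Q$ one only obtains a decomposition indexed by Galois orbits of characters, and the summands are not one-dimensional. Lemma \ref{lemma:decomposecover} yields the much coarser splitting $V \cong \HH_1(\Sigma_g;\Q) \oplus \CCC{g}$, not a full isotypic decomposition.

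The paper's route is entirely different. After splitting off $\HH_1(\Sigma_g;\Q)$ (handled by Hain, Theorem \ref{theorem:h1modlh}), one is reduced to proving $\HH_1(\Mod_g^1(L);\CCC{g}) \cong \III{g}$, where $\III{g}$ has been identified with the augmentation ideal of $\Q[H_L]$ (Lemma \ref{lemma:identifyi}). This isomorphism arises as the connecting map in the long exact sequence for $0 \to \III{g} \to \CCC{g,1} \to \CCC{g} \to 0$: one proves $\HH_1(\Mod_{g,1}(L);\CCC{g,1}) = 0$ (Theorem \ref{theorem:mainboundary2}) and $(\CCC{g,1})_{\Mod_{g,1}(L)} = 0$ (Lemma \ref{lemma:killcg}), which forces the connecting map to be an isomorphism. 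The vanishing of $\HH_1(\Mod_{g,1}(L);\CCC{g,1})$ is the hard step and occupies most of the paper, ultimately resting on a detailed analysis (Lemmas \ref{lemma:sinimage} and \ref{lemma:sallofimage}) of the boundary map $\HH_1(\KKK{g};\CCC{g}) \to \III{g}$ restricted to the point-pushing subgroup.
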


\begin{remark}
Both $\HH_1(\Mod_{g}^1(L);\HH_1(\KKK{g};\Q))$ and $\Q[H_L]$ possess natural $\Mod_{g}^1$-actions.  The
action of $\Mod_g^1$ on $\HH_1(\Mod_{g}^1(L);\HH_1(\KKK{g};\Q))$ comes from  conjugation, and the action on $\Q[H_L]$
factors through the symplectic group.  The isomorphism in Theorem \ref{theorem:mainclosed} is equivariant
with respect to these actions.
\end{remark}

Somewhat surprisingly, things are quite different for surfaces with boundary.  Define $\Mod_{g,1}(L)$ to be the
kernel of the action of $\Mod_{g,1}$ on $\HH_1(\Sigma_{g,1};\Z/L)$.  Fixing a basepoint
for $\pi_1(\Sigma_{g,1})$ on $\partial \Sigma_{g,1}$, the groups $\Mod_{g,1}$ and $\Mod_{g,1}(L)$ act 
on $\pi_1(\Sigma_{g,1})$.  Define $\KKK{g,1}$ to be the kernel of the map 
$\pi_1(\Sigma_{g,1}) \rightarrow \HH_1(\Sigma_{g,1};\Z/L)$.  The group $\KKK{g,1}$ is the fundamental
group of the universal abelian $\Z/L$-cover of $\Sigma_{g,1}$ and is preserved by the actions
of $\Mod_{g,1}$ and $\Mod_{g,1}(L)$.  We then have the following theorem.

\begin{maintheorem}
\label{theorem:mainboundary}
For $g \geq 4$ and $L \geq 2$, we have 
$$\HH_1(\Mod_{g,1};\HH_1(\KKK{g,1};\Q)) \cong \HH_1(\Mod_{g,1}(L);\HH_1(\KKK{g,1};\Q)) \cong \Q.$$
\end{maintheorem}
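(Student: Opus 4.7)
The plan is to deduce Theorem~\ref{theorem:mainboundary} from Theorem~\ref{theorem:mainclosed} via a comparison between the covers $\KKK{g,1}$ and $\KKK{g}$. Capping the boundary of $\Sigma_{g,1}$ with a disk that contains the marked point induces a surjection $\pi_1(\Sigma_{g,1}) \twoheadrightarrow \pi_1(\Sigma_g^1)$ whose kernel is normally generated by the boundary loop $\gamma := [a_1,b_1]\cdots[a_g,b_g]$. Since $\gamma$ lies in the commutator subgroup of $\pi_1(\Sigma_{g,1})$, it already belongs to $\KKK{g,1}$, and the surjection restricts to $\KKK{g,1} \twoheadrightarrow \KKK{g}$. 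A Fox-derivative computation shows that the class $[\gamma] \in \HH_1(\KKK{g,1};\Q)$ has zero projection onto the $H_L$-invariant summand but nonzero projection onto every other $H_L$-isotypic component, so the $\Q[H_L]$-submodule $J := \Q[H_L]\cdot[\gamma]$ is isomorphic to the augmentation ideal of $\Q[H_L]$. Since $\gamma$ is fixed pointwise by $\Mod_{g,1}$ and the $\Mod_{g,1}(L)$-action on $V := \HH_1(\KKK{g,1};\Q)$ commutes with the deck group $H_L$-action, $\Mod_{g,1}(L)$ acts trivially on $J$. This yields a short exact sequence of $\Mod_{g,1}(L)$-modules
\[
0 \to J \to V \to \HH_1(\KKK{g};\Q) \to 0
\]
with $J$ carrying the trivial action.

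The boundary Dehn twist $T_\gamma \in \Mod_{g,1}$ acts on $\KKK{g,1}$ by conjugation by $\gamma \in \KKK{g,1}$, hence trivially on $V$. The Birman extension $1 \to \Z\langle T_\gamma \rangle \to \Mod_{g,1}(L) \to \Mod_g^1(L) \to 1$ and its Hochschild--Serre spectral sequence therefore give a five-term exact sequence
\[
\HH_2(\Mod_g^1(L); V) \to V_{\Mod_g^1(L)} \to \HH_1(\Mod_{g,1}(L); V) \to \HH_1(\Mod_g^1(L); V) \to 0.
\]
Meanwhile, applying $\HH_*(\Mod_g^1(L); -)$ to the short exact sequence of the first paragraph and using Theorem~\ref{theorem:mainclosed} to identify $\HH_1(\Mod_g^1(L); \HH_1(\KKK{g};\Q)) \cong \Q[H_L]$, together with Hain's vanishing of $\HH_1(\Mod_g^1(L);\Q)$, reduces the long exact sequence to
\[
0 \to \HH_1(\Mod_g^1(L); V) \to \Q[H_L] \xrightarrow{\delta} J \to V_{\Mod_g^1(L)} \to \HH_1(\KKK{g};\Q)_{\Mod_g^1(L)} \to 0.
\]
The crucial claim is that the connecting homomorphism $\delta$ is the canonical projection $\Q[H_L] \twoheadrightarrow J$ onto the augmentation ideal. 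Granting this and using Looijenga's description of the Prym representation to see that $\HH_1(\KKK{g};\Q)_{\Mod_g^1(L)}$ vanishes in our stable range, we get $V_{\Mod_g^1(L)} = 0$ and $\HH_1(\Mod_g^1(L); V) \cong \ker \delta \cong \Q$. The five-term sequence then collapses to $\HH_1(\Mod_{g,1}(L); V) \cong \Q$. The isomorphism for $\Mod_{g,1}$ itself follows by averaging over the finite group $\Mod_{g,1}/\Mod_{g,1}(L) \cong \Sp_{2g}(\Z_L)$, since the surviving $\Q$ is $\Sp$-invariant: it is detected by the composition of Morita's crossed homomorphism $\Mod_{g,1} \to \HH_1(\Sigma_{g,1};\Q)$ with the transfer embedding of $\HH_1(\Sigma_{g,1};\Q)$ as the trivial-character summand of $V$.

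The main obstacle is the identification of $\delta$. My approach would be to pick an explicit crossed-homomorphism representative $c : \Mod_g^1(L) \to \HH_1(\KKK{g};\Q)$ for a chosen generator of the $\Q[H_L]$ in Theorem~\ref{theorem:mainclosed}, lift $c$ to a set-theoretic map $\tilde c : \Mod_g^1(L) \to V$, and verify that its coboundary (a $J$-valued 2-cocycle) matches the corresponding element of $J$ under the projection $\Q[H_L] \twoheadrightarrow J$. A secondary difficulty is controlling the $\HH_2$ transgression and the coinvariants $V_{\Mod_g^1(L)}$ that appear in the two spectral sequences; the hypothesis $g \geq 4$ enters through the stability results required for these pieces.
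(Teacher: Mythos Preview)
Your strategy runs against the logical flow of the paper: in this paper Theorem~\ref{theorem:mainclosed} is \emph{deduced from} Theorem~\ref{theorem:mainboundary}, not the other way around. Concretely, the proof of $\HH_1(\Mod_g^1(L);\CCC{g}) \cong \III{g}$ (the heart of Theorem~\ref{theorem:mainclosed}) uses the long exact sequence attached to $0 \to \III{g} \to \CCC{g,1} \to \CCC{g} \to 0$ together with the vanishing $\HH_1(\Mod_{g,1}(L);\CCC{g,1}) = 0$, and that vanishing \emph{is} Theorem~\ref{theorem:mainboundary} (after splitting off the $\HH_1(\Sigma_{g,1};\Q)$ summand via Hain). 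So invoking Theorem~\ref{theorem:mainclosed} here is circular unless you supply an independent proof of it, which you do not.

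Even setting the circularity aside and asking whether your outline could serve as an alternative route, the step you flag as ``the main obstacle'' really is the whole problem. Identifying the connecting map $\delta : \Q[H_L] \to J$ with the augmentation projection is equivalent, after unwinding the identifications, to the computation the paper carries out in Lemmas~\ref{lemma:sinimage} and~\ref{lemma:sallofimage}: one must show that the boundary map $\partial : \HH_1(\KKK{g};\CCC{g}) \to \III{g}$ is surjective with kernel exactly $\Span{\SSS}$, and that $\SSS$ dies in $\HH_1(\Mod_g^1(L);\CCC{g})$. Your proposed method (lift a crossed homomorphism, compute its coboundary) would have to reproduce essentially this calculation. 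Likewise, the vanishing of $\HH_1(\KKK{g};\Q)_{\Mod_g^1(L)}$ that you attribute to Looijenga is in the paper a consequence of Lemma~\ref{lemma:killcg}, which again rests on Lemma~\ref{lemma:sallofimage}. In short, the paper's direct attack on the boundary case via the Birman sequence for $\Mod_{g,1}(L)$ and the explicit analysis of $\partial$ is not a detour you can avoid; it is the computational core that your reversed strategy would still need.
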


\begin{remark}
The group $\Mod_{g}$ does not act on the universal abelian $\Z/L$-cover of $\Sigma_g$.  Each individual
mapping class can be lifted to a diffeomorphism of the cover, but a fixed basepoint is necessary to make
this lift canonical and thereby provide a representation of the entire group.  The best one can achieve
is as follows.  There is a Birman exact sequence (see \S \ref{section:mcgprelim}) of the form
\begin{equation}
\label{eqn:birman}
1 \longrightarrow \pi_1(\Sigma_g) \longrightarrow \Mod_g^1 \longrightarrow \Mod_g \longrightarrow 1.
\end{equation}
Since $\Mod_{g}^1$ acts on $\HH_1(\KKK{g};\Q)$, the group $\Mod_g$ acts on the 
$(\HH_1(\KKK{g};\Q))_{\pi_1(\Sigma_g)}$.  However, using the transfer map (see Lemma \ref{lemma:supertransfer} 
below) one can show 
that this ring of coinvariants is simply $\HH_1(\Sigma_g;\Q)$, so no new representation is obtained.
\end{remark}

\paragraph{Comments on the proofs.}
The key observation underlying the proofs of our theorems is as follows.  The group $\Mod_{g}^1$ contains a natural
copy of $\pi_1(\Sigma_g)$, known as the ``point-pushing subgroup'' (see \S \ref{section:mcgprelim} below).  This
fits into the Birman exact sequence \eqref{eqn:birman} above.
While the action of $\Mod_{g}^1$ on $\KKK{g}$ is very complicated, the action of $\pi_1(\Sigma_g)$ on
$\KKK{g} \lhd \pi_1(\Sigma_g)$ is simply conjugation.  Moreover, it turns out that in some vague sense the action of
$\Mod_{g}^1$ on $\HH_1(\KKK{g};\Q)$ is ``concentrated'' in the action of $\pi_1(\Sigma_g)$ on $\HH_1(\KKK{g};\Q)$.  
Intuitively, this happens because (as noted in the remark above) the quotient of $\Mod_{g}^1$ 
by $\pi_1(\Sigma_g)$, namely $\Mod_g$, does {\em not} act in any natural way on $\KKK{g}$.  We prove our
theorems by carefully examining all of these groups and actions.

\paragraph{Some related results.}
Some additional related results should be mentioned.  First, Ivanov \cite{IvanovTwisted} has proven
a homological stability result for the homology of $\Mod_{g,1}$ with respect to very general systems
of coefficients (those of ``bounded degree''; the system $\HH_1(\KKK{g,1};\Z)$ satisfies this condition).  This 
generalizes Harer's \cite{HarerStability}
well-known untwisted homological stability theorem for the mapping class group.
Ivanov's theorem has been extended to $\Sigma_{g,b}$ for $b > 1$ by
Boldsen \cite{BoldsenStability}.  We remark that such a result is false for closed surfaces.  Indeed, in
\cite[Corollary 5.4]{MoritaFamilies}, Morita showed that
$$\HH_1(\Mod_g;\HH_1(\Sigma_g;\Z)) \cong \Z / (2g-2)\Z$$
for $g \geq 2$.  In a somewhat different direction, a recent series of papers by Anderson and
Villemoes \cite{AndersonVill1, AndersonVill2, AndersonVill3} calculate the first homology groups of $\Mod_{g,b}^n$
with coefficients in certain spaces of functions on representations varieties of $\Mod_{g,b}^n$.

\paragraph{Outline of paper.}
In \S \ref{section:preliminaries}, we discuss some background results about group cohomology and the mapping class
group.  Next, in \S \ref{section:cast} we introduce a number of groups and group actions that will play
important roles in our paper.  At the end of this section, we state two key technical lemmas whose proofs
are postponed until later.  In \S \ref{section:mainproofs}, we prove
our main theorems (assuming the truth of these two technical lemmas).  In \S \ref{section:mainlemma}, we give the outline
of the proof of our two key technical lemmas, reducing them to two other results, the first of which is proven in \S \ref{section:linearalgebra} (using
some preliminary calculations that are first done in \S \ref{section:generatorsandrelations})
and the second in \S \ref{section:kills}.

\paragraph{Notation and conventions.}
We will denote by $i(x,y) \in \Z/L$ the algebraic intersection number of $x,y \in \HH_1(\Sigma_g;\Z/L)$.
All surfaces we mention will contain a basepoint unless otherwise specified, 
and all maps between surfaces will respect this basepoint.
Also, if $G$ is a group, then we define $[g_1,g_2] = g_1 g_2 g_1^{-1} g_2^{-1}$ and $g_1^{g_2} = g_2 g_1 g_2^{-1}$ for $g_1, g_2 \in G$.

\paragraph{Acknowledgments.}
I wish to thank an anonymous referee who pointed out Lemma \ref{lemma:higherpairing} below to me.  This
dramatically simplified my original proofs.

\section{Preliminaries}
\label{section:preliminaries}

\subsection{Group homology}

We begin by reviewing some facts about group homology and establishing
some notation (see \cite{BrownCohomology} for more details).

\paragraph{Degree zero.}
Let $G$ be a group and $M$ be a $G$-module.  The {\em coinvariants}
of $M$, denoted $M_G$, is the quotient $M/K$, where $K$ is the submodule spanned by the set
$\{\text{$g \cdot x - x$ $|$ $x \in M$, $g \in G$}\}$.  We have $\HH_0(G;M) = M_G$.

\paragraph{The five-term exact sequence.}
Let
$$1 \longrightarrow K \longrightarrow G \longrightarrow Q \longrightarrow 1$$
be a short exact sequence of groups and let $M$ be a $G$-module.
We then have a 5-term exact sequence
$$\HH_2(G;M) \longrightarrow \HH_2(Q;M_K) \longrightarrow (\HH_1(K;M))_Q \longrightarrow \HH_1(G;M) \longrightarrow
\HH_1(Q;M_K) \longrightarrow 0.$$

\paragraph{The long exact sequence.}
Let $G$ be a group and let
$$0 \longrightarrow M_1 \longrightarrow M_2 \longrightarrow M_3 \longrightarrow 0$$
be a short exact sequence of $G$-modules.  Then there is a long exact sequence of the form
$$\cdots \longrightarrow \HH_k(G;M_1) \longrightarrow \HH_k(G;M_2) \longrightarrow \HH_k(G;M_3) \longrightarrow
\HH_{k-1}(G;M_1) \longrightarrow \cdots.$$

\paragraph{The transfer map.}
If $G_2 < G_1$ are groups satisfying $[G_1:G_2] < \infty$ and $M$ is a $G_1$-module, then for all $k$ there exists a
{\em transfer map} of the form $t : \HH_k(G_1;M) \rightarrow \HH_k(G_2;M)$
(see, e.g., \cite[Chapter III.9]{BrownCohomology}).
The key property of $t$ (see \cite[Proposition III.9.5]{BrownCohomology})
is that if $i : \HH_k(G_2;M) \rightarrow \HH_k(G_1;M)$ is the map induced
by the inclusion, then $i \circ t : \HH_k(G_1;M) \rightarrow \HH_k(G_1;M)$ is multiplication by  $[G_1:G_2]$.
In particular, if $M$ is a $G_1$-vector space over $\Q$,
then we obtain a right inverse $\frac{1}{[G_1:G_2]} t$ to $i$.  This yields the following standard lemma.

\begin{lemma}
\label{lemma:transfer}
Let $G_2 < G_1$ be groups satisfying $[G_1:G_2] < \infty$ and let $M$ be a $G_1$-vector space over $\Q$.  Then the
map $\HH_k(G_2;M) \rightarrow \HH_k(G_1;M)$ is surjective for all $k \geq 0$.
\end{lemma}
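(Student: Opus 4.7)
The plan is to deduce this directly from the property of the transfer map recalled just before the statement. Let $i : \HH_k(G_2;M) \to \HH_k(G_1;M)$ denote the map induced by the inclusion $G_2 \hookrightarrow G_1$, and let $t : \HH_k(G_1;M) \to \HH_k(G_2;M)$ be the transfer. The cited property tells us that $i \circ t$ is multiplication by $n := [G_1:G_2]$ on $\HH_k(G_1;M)$.

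The key observation is that since $M$ is a $\Q$-vector space, the homology group $\HH_k(G_1;M)$ is also a $\Q$-vector space. This is because it can be computed as the homology of $F_{\bullet} \otimes_{\Z G_1} M$, where $F_{\bullet}$ is a projective resolution of $\Z$ over $\Z G_1$; the scalar action of $\Q$ on the second tensor factor descends to an action on homology. In particular, multiplication by $n$ is an invertible endomorphism of $\HH_k(G_1;M)$, with inverse given by multiplication by $\frac{1}{n}$.

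Consequently, the map $s := \frac{1}{n} t : \HH_k(G_1;M) \to \HH_k(G_2;M)$ satisfies $i \circ s = \frac{1}{n}(i \circ t) = \mathrm{id}$, exhibiting $i$ as a split surjection. Surjectivity of $i$ follows immediately. There is no real obstacle here; the entire argument is a one-line consequence of the transfer relation together with the divisibility of the coefficient module, and I would simply present it as such.
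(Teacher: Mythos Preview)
Your argument is correct and is essentially identical to the paper's: the paper observes just before the lemma that $\frac{1}{[G_1:G_2]} t$ is a right inverse to $i$ when $M$ is a $\Q$-vector space, and deduces surjectivity from this. There is nothing to add.
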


Assume now that $M = \Q$ and that $\Gamma$ is a group acting on $G_2$ and $G_1$ such that the inclusion is
$\Gamma$-equivariant.  The induced map $\HH_k(G_2;M) \rightarrow \HH_k(G_1;M)$ is therefore
$\Gamma$-equivariant.  Moreover, the map $t : \HH_k(G_1;M) \rightarrow \HH_k(G_2;M)$ is also $\Gamma$-equivariant, so
the surjection $\HH_k(G_2;M) \rightarrow \HH_k(G_1;M)$ splits in a $\Gamma$-equivariant manner.  We
obtain the following lemma.

\begin{lemma}
\label{lemma:decomposecover}
Fix $k \geq 0$, and let $G_2 < G_1$ be groups satisfying $[G_1:G_2] < \infty$.
Let $\Gamma$ be a group acting on $G_1$ and $G_2$ such
that the inclusion map $G_2 \rightarrow G_1$ is $\Gamma$-equivariant.  Define $C$ to be the kernel of the
surjection $\HH_k(G_2;\Q) \rightarrow \HH_k(G_1;\Q)$.  We then have a $\Gamma$-invariant splitting
$\HH_k(G_2;\Q) \cong \HH_k(G_1;\Q) \oplus C$.
\end{lemma}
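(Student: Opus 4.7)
The plan is to upgrade the $\Q$-linear splitting already discussed in the paragraph preceding the lemma to a $\Gamma$-equivariant one. Write $n = [G_1 : G_2]$ and let $i : \HH_k(G_2;\Q) \to \HH_k(G_1;\Q)$ be the map induced by inclusion and $t : \HH_k(G_1;\Q) \to \HH_k(G_2;\Q)$ be the transfer. The identity $i \circ t = n \cdot \mathrm{id}$ means that $s := \tfrac{1}{n} t$ is a $\Q$-linear section of $i$, and so $\HH_k(G_2;\Q) = \Image(s) \oplus \Ker(i) = \HH_k(G_1;\Q) \oplus C$. The only thing left to check is that $s$, and hence this splitting, is $\Gamma$-equivariant; since $s$ is just a rational scalar multiple of $t$, it suffices to prove that $t$ commutes with the $\Gamma$-action.

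Equivariance of $i$ is immediate from the assumption that $G_2 \hookrightarrow G_1$ is $\Gamma$-equivariant. For equivariance of $t$, I would use its standard chain-level formula (see \cite[Chapter III.9]{BrownCohomology}): fix a set of coset representatives $\{r_1,\ldots,r_n\}$ for $G_2 \backslash G_1$, so that at the level of the bar resolution the transfer is given by a fixed finite sum involving the $r_j$. For $\gamma \in \Gamma$, the set $\{\gamma(r_1),\ldots,\gamma(r_n)\}$ is again a set of coset representatives because $\gamma$ preserves $G_2$. Comparing the transfer computed from $\{r_j\}$ after applying $\gamma_\ast$ to the transfer computed from $\{\gamma(r_j)\}$ before applying $\gamma_\ast$, the two chain-level expressions agree (they differ only by the ambiguity in the choice of coset representatives, which is well-known to be a boundary), so $t \circ \gamma_\ast = \gamma_\ast \circ t$ on homology.

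With $s$ and $i$ both $\Gamma$-equivariant, the decomposition $\HH_k(G_2;\Q) = \Image(s) \oplus C$ is a decomposition of $\Gamma$-modules, and $\Image(s) \cong \HH_k(G_1;\Q)$ via $i$ as $\Gamma$-modules. The only mildly delicate point is the chain-level verification of $\Gamma$-equivariance of $t$; once that is in hand, everything else is formal.
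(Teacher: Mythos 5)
Your proof is correct and follows essentially the same route as the paper, which likewise observes that both the inclusion-induced map and the transfer are $\Gamma$-equivariant, so that $\frac{1}{[G_1:G_2]}t$ is a $\Gamma$-equivariant section. You additionally sketch the chain-level justification for $\Gamma$-equivariance of the transfer (via the observation that $\gamma$ permutes coset representatives of $G_2$ in $G_1$, combined with independence of the transfer from that choice), which the paper simply asserts.
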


Finally, for finite-index normal subgroups, the Hochschild-Serre spectral sequences implies the following
strengthening of Lemma \ref{lemma:transfer}.

\begin{lemma}
\label{lemma:supertransfer}
Let $G_2 \lhd G_1$ be groups satisfying $[G_1:G_2] < \infty$ and let $M$ be a $G_1$-vector space over $\Q$.  Then
$\HH_k(G_1;M) \cong (\HH_k(G_2;M))_{G_1}$ for all $k \geq 0$.
\end{lemma}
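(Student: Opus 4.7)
The plan is to apply the Hochschild--Serre spectral sequence associated to the short exact sequence
$$1 \longrightarrow G_2 \longrightarrow G_1 \longrightarrow Q \longrightarrow 1,$$
where $Q = G_1/G_2$ is a finite group. This is the first-quadrant homological spectral sequence
$$E^2_{p,q} = \HH_p(Q; \HH_q(G_2;M)) \Longrightarrow \HH_{p+q}(G_1;M).$$

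The decisive observation is that each $\HH_q(G_2;M)$ is a $\Q$-vector space, since $M$ is. For any finite group $Q$ and any $\Q[Q]$-module $N$, we have $\HH_p(Q;N)=0$ whenever $p>0$: applying the transfer formalism behind Lemma \ref{lemma:transfer} to the trivial subgroup $1 \leq Q$, the endomorphism $\HH_p(Q;N) \to \HH_p(1;N) \to \HH_p(Q;N)$ is on the one hand zero (since $\HH_p(1;N)=0$ for $p>0$) and on the other hand equal to multiplication by $|Q|$, which is invertible on the $\Q$-vector space $N$. Hence $E^2_{p,q} = 0$ for all $p>0$, the spectral sequence collapses onto the column $p=0$ with no extension problems, and we read off
$$\HH_k(G_1;M) \cong E^2_{0,k} = \HH_0(Q;\HH_k(G_2;M)) = (\HH_k(G_2;M))_Q.$$

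To finish, I would verify that the $G_1$-action on $\HH_k(G_2;M)$ factors through $Q$, so that coinvariants under $Q$ coincide with coinvariants under $G_1$. This is the standard fact that inner automorphisms of a group act trivially on its homology with any compatible coefficient system: since $G_2 \lhd G_1$ acts on itself by conjugation and on $M$ by its given action, the $G_2$-action on $\HH_k(G_2;M)$ is trivial, and the full $G_1$-action descends to the $Q$-action appearing in the spectral sequence. Combining this with the displayed identification gives $\HH_k(G_1;M) \cong (\HH_k(G_2;M))_{G_1}$.

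No substantive obstacle is expected. The argument is essentially a two-line application of Hochschild--Serre whose only non-formal ingredient is the vanishing of positive-degree $Q$-homology with $\Q$-coefficients, an observation that is already built into the transfer machinery used earlier in this section.
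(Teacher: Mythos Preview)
Your argument is correct and is exactly the approach the paper indicates: the paper does not give a proof but simply remarks that the lemma follows from the Hochschild--Serre spectral sequence, and your proposal fills in precisely those details (collapse at $E^2$ because $Q$ is finite and the coefficients are rational, plus the observation that the $G_1$-action factors through $Q$).
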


\subsection{Rational group rings}

Let $G$ be a finite group and let $\Q[G]$ be the rational group ring of $G$.  We will consider
$\Q[G]$ to be a left $G$-module.
Let $\epsilon : \Q[G] \rightarrow \Q$ be the {\em augmentation map}, i.e.\ the unique linear map
such that $\epsilon(g) = 1$ for all $g \in G$.  The map $\epsilon$ is a map of $G$-modules, where
$\Q$ has the trivial $G$-action.  Its kernel is the {\em augmentation ideal} $I(G)$.  We thus
have a short exact sequence of $G$-modules
\begin{equation}
\label{eqn:augseq}
0 \longrightarrow I(G) \longrightarrow \Q[G] \stackrel{\epsilon}{\longrightarrow} \Q \longrightarrow 0.
\end{equation}
Set $\theta = \sum_{g \in G} g \in \Q[G]$.
The element $\theta$ is invariant under $G$, and the exact sequence \eqref{eqn:augseq} splits
via the $G$-equivariant map $\psi : \Q \rightarrow \Q[G]$ defined by $\psi(1) = \frac{1}{|G|} \theta$.  The associated
projection $\phi : \Q[G] \rightarrow I(G)$ satisfies $\Ker(\phi) = \Span{\theta}$.  From these considerations,
we obtain the following lemma.

\begin{lemma}
\label{lemma:groupalgebra}
Let $G$ be a finite group.  Then $\Q[G] \cong \Q \oplus I(G)$, where $I(G)$ is isomorphic to the
quotient of $\Q[G]$ by $\Span{\theta}$.
\end{lemma}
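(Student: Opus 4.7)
The plan is to extract both assertions directly from the splitting data already assembled in the paragraph preceding the lemma. The short exact sequence \eqref{eqn:augseq} of $G$-modules is the backbone; everything else is a consequence of exhibiting an explicit $G$-equivariant splitting.

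First I would verify that the map $\psi : \Q \rightarrow \Q[G]$ sending $1 \mapsto \frac{1}{|G|} \theta$ is indeed a $G$-equivariant section of $\epsilon$. The $G$-equivariance reduces to the observation that $g \cdot \theta = \theta$ for every $g \in G$, which holds because left multiplication by $g$ merely permutes the summands of $\theta = \sum_{h \in G} h$; and $\epsilon \circ \psi = \mathrm{id}_{\Q}$ follows from $\epsilon(\theta) = |G|$. Standard homological algebra then splits \eqref{eqn:augseq} as a direct sum of $G$-modules, giving $\Q[G] \cong \Q \oplus I(G)$, which is the first claim.

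For the second claim, I would look at the associated projection $\phi : \Q[G] \rightarrow I(G)$, defined by $\phi(x) = x - \psi(\epsilon(x))$. This is a $G$-equivariant surjection onto $I(G)$, and I need to identify its kernel with $\Span{\theta}$. One inclusion is immediate: $\phi(\theta) = \theta - \psi(|G|) = \theta - \theta = 0$, so $\Span{\theta} \subseteq \Ker(\phi)$. For the other, if $\phi(x) = 0$ then $x = \psi(\epsilon(x)) = \frac{\epsilon(x)}{|G|} \theta \in \Span{\theta}$. The first isomorphism theorem then yields $I(G) \cong \Q[G]/\Span{\theta}$.

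There is no real obstacle here; the only thing worth being careful about is that the splitting is $G$-equivariant (so that the decomposition is genuinely one of $G$-modules, not merely of $\Q$-vector spaces), and this is precisely what the $G$-invariance of $\theta$ buys us. The finiteness of $G$ is used only to ensure that $\theta$ is a well-defined element of $\Q[G]$ and that $|G|$ is invertible in $\Q$.
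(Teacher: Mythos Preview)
Your proposal is correct and follows essentially the same approach as the paper: the paper's argument is contained in the paragraph immediately preceding the lemma, which introduces $\psi$ as a $G$-equivariant section of $\epsilon$ and asserts that the associated projection $\phi$ has kernel $\Span{\theta}$. You have simply fleshed out the verification of these two facts, which the paper leaves implicit.
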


\subsection{The mapping class group}
\label{section:mcgprelim}

\paragraph{Dehn twists.}
We will denote by $T_{\gamma}$ the left Dehn twist about a simple closed curve $\gamma$ on a surface.

\paragraph{The Birman exact sequence.}
For simplicity, assume that $g \geq 2$.
The Birman exact sequence describes the effect on the mapping class group of deleting a marked point
or gluing a disc to a boundary component.  The first version, due to Birman (see \cite{BirmanBook}), is
of the form
\begin{equation}
\label{eqn:birman1}
1 \longrightarrow \pi_1(\Sigma_{g,b}^p,\ast) \longrightarrow \Mod_{g,b}^{p+1} \longrightarrow \Mod_{g,b}^p
\longrightarrow 1.
\end{equation}
Here $\ast$ is a marked point and the map $\Mod_{g,b}^{p+1} \rightarrow \Mod_{g,b}^p$ comes from deleting $\ast$.
For $\gamma \in \pi_1(\Sigma_{g,b}^p,\ast)$, the associated mapping class in the kernel of \eqref{eqn:birman1}
``pushes'' the deleted marked point around the path $\gamma$.  For this reason, the kernel $\pi_1(\Sigma_{g,b}^p,\ast)$
is known as the ``point-pushing subgroup''.  If $\gamma \in \pi_1(\Sigma_{g,b}^p,\ast)$ can be realized by a simple
closed curve, then there is a nice formula for the associated ``point-pushing'' mapping class $\rho_{\gamma}$.  Namely, let
$\gamma_1$ and $\gamma_2$ be the boundary components of a regular neighborhood of $\gamma$ (see Figure \ref{figure:pointpushing}.c).
Assume that $\gamma_1$ lies to the left of $\gamma$ and $\gamma_2$ to the right.  
Then as is clear from Figure \ref{figure:pointpushing}.a--b,
we have $\rho_{\gamma} = T_{\gamma_1} T_{\gamma_2}^{-1}$.

\Figure{figure:pointpushing}{PointPushing}{a. $\gamma \in \pi_1(\Sigma_{g,b}^p,\ast)$ can be realized by a simple closed curve.
\CaptionSpace b. The effect of the ``point-pushing'' map $\rho_{\gamma}$.
\CaptionSpace c. $\rho_{\gamma} = T_{\gamma_1} T_{\gamma_2}^{-1}$}

The second form of the Birman exact sequence, due to Johnson \cite{JohnsonFinite}, is of the form
\begin{equation}
\label{eqn:birman2}
1 \longrightarrow \pi_1(U\Sigma_{g,b}^p) \longrightarrow \Mod_{g,b+1}^p \longrightarrow \Mod_{g,b}^p \longrightarrow 1.
\end{equation}
Here $U\Sigma_{g,b}^p$ is the unit tangent bundle of $\Sigma_{g,b}^p$ and the map
$\Mod_{g,b+1}^p \rightarrow \Mod_{g,b}^p$ comes from gluing a disc to a boundary component $\beta$ of
$\Sigma_{g,b+1}^p$ and extending mapping classes by the identity.  The fiber of the kernel $\pi_1(U\Sigma_{g,b}^p)$
corresponds to the mapping class $T_{\beta}$.  

The group $\Mod_{g,b+1}^p$ acts on $\HH_1(\Sigma_{g,b+1}^p;\Q)$.  Restrict this action to $\pi_1(U\Sigma_{g,b}^p) < \Mod_{g,b+1}^p$.
Since $T_{\beta}$ acts trivially on $\HH_1(\Sigma_{g,b+1}^p;\Q)$, the action of $\pi_1(U\Sigma_{g,b}^p)$ on
$\HH_1(\Sigma_{g,b+1}^p;\Q)$ factors through an action of $\pi_1(\Sigma_{g,b}^p,\ast)$.  This action has the following
simple description.

\begin{lemma}
\label{lemma:homologyaction}
The above action of $\pi_1(\Sigma_{g,b}^p,\ast)$ on $\HH_1(\Sigma_{g,b+1}^p;\Q)$ is given by the formula
$$\gamma(v) = v+i([\gamma],v) \cdot [\beta] \quad \quad (\text{$\gamma \in \pi_1(\Sigma_{g,b}^p,\ast)$, $v \in \HH_1(\Sigma_{g,b+1}^p;\Q)$}),$$
where the boundary component $\beta$ is oriented such that the interior of the surface is to its right.
\end{lemma}
\begin{proof}
It is enough to check this for elements $\gamma \in \pi_1(\Sigma_{g,b}^p,\ast)$ that can be realized by simple closed curves.  For such
curves, this formula is immediate from Figure \ref{figure:pointpushing}.a--b.
\end{proof}

\paragraph{The level $L$ subgroup.}
Now assume that $b=p=0$ and fix some $L \geq 2$.  The kernels of \eqref{eqn:birman1} and \eqref{eqn:birman2} both
lie in the level $L$ subgroup of the mapping class group.  We thus have short exact sequences
$$1 \longrightarrow \pi_1(\Sigma_g) \longrightarrow \Mod_{g}^1(L) \longrightarrow \Mod_{g}(L) \longrightarrow 1$$
and
$$1 \longrightarrow \pi_1(U\Sigma_g) \longrightarrow \Mod_{g,1}(L) \longrightarrow \Mod_{g}(L) \longrightarrow 1.$$
We will also refer to these as Birman exact sequences.

We will need two cohomological results about the level $L$ subgroups of the mapping class group, both
of which are due to Hain.  To simplify their statements, we will denote the whole mapping class group by
$\Mod_{g,1}(1)$ and $\Mod_{g}^1(1)$.

\begin{theorem}[{Hain, \cite{HainTorelli}}]
\label{theorem:h1modl}
For $g \geq 3$ and $L \geq 1$, we have $\HH_1(\Mod_{g,1}(L);\Q) = 0$.
\end{theorem}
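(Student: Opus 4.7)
The plan is to apply the five-term exact sequence to the short exact sequence
\[
1 \longrightarrow \Torelli_{g,1} \longrightarrow \Mod_{g,1}(L) \longrightarrow \Sp_{2g}(\Z,L) \longrightarrow 1,
\]
where $\Torelli_{g,1}$ is the Torelli group (the kernel of $\Mod_{g,1} \to \Sp_{2g}(\Z)$) and $\Sp_{2g}(\Z,L) = \Ker(\Sp_{2g}(\Z) \to \Sp_{2g}(\Z_L))$ is the principal level-$L$ congruence subgroup. The Torelli group lies inside $\Mod_{g,1}(L)$ because it already acts trivially on $\HH_1(\Sigma_{g,1};\Z)$, and the quotient $\Mod_{g,1}(L)/\Torelli_{g,1}$ is precisely $\Sp_{2g}(\Z,L)$. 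With trivial $\Q$-coefficients, the tail of the five-term sequence reads
\[
(\HH_1(\Torelli_{g,1};\Q))_{\Sp_{2g}(\Z,L)} \longrightarrow \HH_1(\Mod_{g,1}(L);\Q) \longrightarrow \HH_1(\Sp_{2g}(\Z,L);\Q) \longrightarrow 0,
\]
so it suffices to show that both outer terms vanish.

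For the left term, the key input is Johnson's theorem: for $g \geq 3$ there is an $\Sp_{2g}(\Z)$-equivariant isomorphism $\HH_1(\Torelli_{g,1};\Q) \cong \wedge^3 H_{\Q}$, where $H_{\Q} := \HH_1(\Sigma_g;\Q)$. This is a rational representation of the algebraic group $\Sp_{2g}$, and for $g \geq 3$ it decomposes as $H_{\Q} \oplus \wedge^3_0 H_{\Q}$, with neither summand being the trivial representation. Since $\Sp_{2g}(\Z,L)$ has finite index in $\Sp_{2g}(\Z)$, it is Zariski dense in $\Sp_{2g}(\Q)$ (by Borel density), so its coinvariants on any rational $\Sp_{2g}$-representation coincide with those of $\Sp_{2g}(\Q)$. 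Coinvariants of a semisimple rational representation vanish when there is no trivial summand, so $(\wedge^3 H_{\Q})_{\Sp_{2g}(\Z,L)} = 0$.

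For the right term, the vanishing $\HH_1(\Sp_{2g}(\Z,L);\Q) = 0$ follows from Kazhdan's property (T): the group $\Sp_{2g}(\Z)$ has property (T) for $g \geq 2$, this property is inherited by finite-index subgroups, and any group with property (T) has vanishing first rational cohomology (equivalently, vanishing first rational homology). Alternatively, one could invoke Borel's stable cohomology calculation for arithmetic lattices in $\Sp_{2g}$. Feeding both vanishings into the five-term exact sequence yields $\HH_1(\Mod_{g,1}(L);\Q) = 0$, which is the desired conclusion.

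The main obstacle --- or rather, the main dependence on outside results --- is the citation of two deep inputs: Johnson's computation of the rational abelianization of the Torelli group, and the vanishing of $\HH^1(\Sp_{2g}(\Z,L);\Q)$ for the arithmetic group. Once these are granted, the remaining work is routine: the five-term exact sequence and Hochschild--Serre formalism of \S\ref{section:preliminaries}, together with standard Zariski-density facts for finite-index subgroups of $\Sp_{2g}(\Z)$. A conceptually cleaner alternative --- which avoids property (T) entirely --- would be to embed everything inside $\Mod_{g,1}$, use Lemma \ref{lemma:supertransfer} to realize $\HH_1(\Mod_{g,1}(L);\Q)$ as the coinvariants under $\Sp_{2g}(\Z_L)$ of itself, and then bootstrap from the classical $\HH_1(\Mod_{g,1};\Q) = 0$; however, this still requires knowing enough about the $\Sp_{2g}(\Z_L)$-action on $\HH_1(\Mod_{g,1}(L);\Q)$ to conclude that taking coinvariants kills everything, which ultimately reduces to the same representation-theoretic input used above.
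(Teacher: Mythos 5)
The paper does not prove this statement; it is quoted as a black box from Hain \cite{HainTorelli}, so there is no internal proof to compare against. Your argument is correct and is essentially the standard one (and essentially Hain's): the extension $1 \to \Torelli_{g,1} \to \Mod_{g,1}(L) \to \Sp_{2g}(\Z,L) \to 1$, Johnson's identification $\HH_1(\Torelli_{g,1};\Q) \cong \wedge^3 \HH_1(\Sigma_g;\Q)$ together with Borel density to kill the coinvariants, and property (T) (or Borel/Raghunathan vanishing) to kill $\HH_1(\Sp_{2g}(\Z,L);\Q)$; all three inputs are applied correctly and the hypotheses $g \geq 3$ (for Johnson and for the absence of trivial summands in $\wedge^3 H$) and $g \geq 2$ (for property (T)) are respected.
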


\begin{theorem}[{Hain, \cite{HainTorelli}}]
\label{theorem:h1modlh}
For $g \geq 3$ and $L \geq 1$, we have
$$\HH_1(\Mod_{g,1}(L);\HH_1(\Sigma_{g};\Q)) \cong \HH_1(\Mod_{g}^1(L);\HH_1(\Sigma_g;\Q)) \cong \Q.$$
\end{theorem}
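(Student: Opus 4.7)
The plan is to deduce both isomorphisms from the Birman exact sequences discussed in \S\ref{section:mcgprelim}, reducing everything to cohomological computations about $\Mod_g(L)$. Set $H := \HH_1(\Sigma_g;\Q)$ throughout.

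First, I would establish $\HH_1(\Mod_{g,1}(L);H) \cong \HH_1(\Mod_g^1(L);H)$ by applying the five-term exact sequence to the central extension
$$1 \longrightarrow \Z \longrightarrow \Mod_{g,1}(L) \longrightarrow \Mod_g^1(L) \longrightarrow 1$$
obtained by capping off the boundary of $\Sigma_{g,1}$ with a disc carrying a marked point. The kernel is generated by the boundary Dehn twist $T_\partial$, which lies in the Torelli group and therefore acts trivially on $H$. Consequently $\HH_1(\Z;H) = H$, and the relevant tail of the five-term sequence becomes
$$H_{\Mod_g^1(L)} \longrightarrow \HH_1(\Mod_{g,1}(L);H) \longrightarrow \HH_1(\Mod_g^1(L);H) \longrightarrow 0.$$
The leftmost group vanishes because $\Mod_g^1(L)$ acts on $H$ through the level-$L$ congruence subgroup of $\Sp_{2g}(\Z)$, whose action on $H$ is irreducible and nontrivial, so its coinvariants vanish by semisimplicity over $\Q$.

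To compute the common value, I would feed the Birman exact sequence
$$1 \longrightarrow \pi_1(\Sigma_g) \longrightarrow \Mod_g^1(L) \longrightarrow \Mod_g(L) \longrightarrow 1$$
into the five-term sequence. Since the point-pushing action of $\pi_1(\Sigma_g)$ on $H$ is trivial, we have $\HH_0(\pi_1(\Sigma_g);H) = H$ and $\HH_1(\pi_1(\Sigma_g);H) \cong H \otimes H$ with the diagonal $\Mod_g(L)$-action. The relevant portion of the five-term sequence then reads
$$\HH_2(\Mod_g(L);H) \stackrel{d}{\longrightarrow} (H \otimes H)_{\Mod_g(L)} \longrightarrow \HH_1(\Mod_g^1(L);H) \longrightarrow \HH_1(\Mod_g(L);H) \longrightarrow 0.$$
By semisimplicity, $(H \otimes H)_{\Mod_g(L)}$ coincides with the $\Sp_{2g}(\Z)$-invariants of $H \otimes H$, which are one-dimensional and spanned by the dual of the algebraic intersection pairing. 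The desired isomorphism $\HH_1(\Mod_g^1(L);H) \cong \Q$ would then follow from (i) the vanishing $\HH_1(\Mod_g(L);H) = 0$, and (ii) the vanishing of the differential $d$.

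The main obstacle is (i), which is not a formal consequence of symplectic representation theory: $\Mod_g(L)$ is much larger than its image in $\Sp_{2g}(\Z)$, and one must rule out any contribution from the Torelli subgroup $\Torelli_g < \Mod_g(L)$. My plan would be to apply the five-term sequence to
$$1 \longrightarrow \Torelli_g \longrightarrow \Mod_g(L) \longrightarrow G \longrightarrow 1,$$
where $G$ denotes the level-$L$ congruence subgroup of $\Sp_{2g}(\Z)$, and to use Johnson's description of $\HH_1(\Torelli_g;\Q)$ as an $\Sp_{2g}$-module together with an invariants calculation to kill the $G$-coinvariants of $\HH_1(\Torelli_g;\Q) \otimes H$. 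This reduces (i) to the vanishing of $\HH_1(G;H)$, which should follow from Borel-type results on the cohomology of arithmetic groups with coefficients in standard representations. Once (i) is in hand, (ii) should be accessible by exhibiting an explicit geometric class in $\HH_1(\Mod_g^1(L);H)$ that surjects onto the one-dimensional space $(H\otimes H)_{\Mod_g(L)}$, forcing the image of $d$ to vanish for dimensional reasons.
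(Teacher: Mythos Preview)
The paper does not prove this theorem at all: it is stated in \S\ref{section:mcgprelim} as a result of Hain and cited to \cite{HainTorelli}, with no argument given. So there is no ``paper's own proof'' against which to compare your proposal.

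Your outline is a sensible strategy and the first reduction (from $\Mod_{g,1}(L)$ to $\Mod_g^1(L)$ via the boundary-capping extension) is correct and is exactly the kind of argument the paper uses elsewhere (cf.\ Lemma~\ref{lemma:boundarytopuncture}). The second reduction via the Birman sequence is also the natural approach, and your identification $(H\otimes H)_{\Mod_g(L)}\cong\Q$ is right.

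There are, however, two genuine soft spots. First, your step~(ii) is not a proof but a promissory note: you say you would ``exhibit an explicit geometric class'' in $\HH_1(\Mod_g^1(L);H)$ to force $d=0$, but you do not produce one. This is the heart of the matter---without such a class the five-term sequence only gives an upper bound of $\Q$, not the actual value. The class in question does exist (it is essentially Morita's crossed homomorphism, or equivalently comes from the first Johnson homomorphism viewed as a cocycle), but writing it down and checking nontriviality for the level-$L$ subgroup is real work that your proposal elides. Second, your approach to~(i) leans on ``Borel-type results'' for $\HH_1$ of the level-$L$ congruence subgroup of $\Sp_{2g}(\Z)$ with coefficients in $H$. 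Borel's stable vanishing theorems come with a range that is not obviously $g\ge 3$; you would need either to check the range carefully or to invoke a sharper result (e.g.\ Raghunathan-type vanishing, or the computations in Hain's paper itself). The Torelli step, by contrast, is fine: $(\wedge^3 H/H)\otimes H$ has no $\Sp_{2g}$-invariants since the two factors are non-isomorphic irreducibles.
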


\begin{remark}
In fact, in \cite{HainTorelli} Hain calculated $\HH_1(\Mod_{g,b}^p(L);M)$ for all rational representations $M$ of the algebraic
group $\Sp_{2g}$.
\end{remark}

\section{The cast of characters}
\label{section:cast}

\subsection{Homology groups of abelian covers}

As in the introduction, define
$$\KKK{g} = \Ker(\pi_1(\Sigma_{g}) \rightarrow \HH_1(\Sigma_g;\Z/L)) \quad \text{and} \quad \KKK{g,1} = \Ker(\pi_1(\Sigma_{g,
1}) \rightarrow \HH_1(\Sigma_{g,1};\Z/L)).$$
Here the basepoint for $\pi_1(\Sigma_{g,1})$ lies on $\partial \Sigma_{g,1}$.  Also, let $\CVR{g}$ (resp.\
$\CVR{g,1}$) be the cover of $\Sigma_{g}$ (resp.\ $\Sigma_{g,1}$) corresponding to $\KKK{g}$ (resp.\
$\KKK{g,1}$).  We will identify $\HH_1(\KKK{g};\Q)$ and $\HH_1(\KKK{g,1};\Q)$ with $\HH_1(\CVR{g};\Q)$
and $\HH_1(\CVR{g,1};\Q)$, respectively.

We have actions of $\Mod_{g}^1$ and $\Mod_{g,1}$ on $\KKK{g}$ and $\KKK{g,1}$, respectively.  Define
$$\CCC{g} = \Ker(\HH_1(\KKK{g};\Q) \rightarrow \HH_1(\Sigma_g;\Q)) \quad \text{and} \quad \CCC{g,1} = \Ker(\HH_1(\KKK{g,1};\Q) \rightarrow \HH_1(\Sigma_{g,1};\Q)).$$
By Lemma \ref{lemma:decomposecover}, we have mapping class group invariant decompositions
$$\HH_1(\KKK{g};\Q) \cong \HH_1(\Sigma_{g};\Q) \oplus \CCC{g} \quad \text{and} \quad \HH_1(\KKK{g,1};\Q) \cong \HH_1(\Sigma_{g,1};\Q) \oplus \CCC{g,1}.$$
The surjective map $\pi_1(\Sigma_{g,1}) \rightarrow \pi_1(\Sigma_{g})$ induced by gluing a disc to the boundary
component of $\Sigma_{g,1}$ restricts to a surjection
$\KKK{g,1} \rightarrow \KKK{g}$.  Let $\III{g}$ be its kernel, so
we have a short exact sequence
\begin{equation}
\label{eqn:kkkiii}
0 \longrightarrow \III{g} \longrightarrow \KKK{g,1} \longrightarrow \KKK{g} \longrightarrow 0.
\end{equation}
We now prove the following.

\begin{lemma}
\label{lemma:identifyi}
For $g \geq 1$ and $L \geq 2$, the vector space $\III{g}$ is isomorphic as a $\Mod_{g,1}$-module
to the augmentation ideal of $\Q[H_L]$, where $H_L = \HH_1(\Sigma_{g,1};\Z/L)$.
\end{lemma}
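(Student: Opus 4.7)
The plan is to construct a $\Mod_{g,1}$-equivariant surjection $\phi : \Q[H_L] \to \III{g}$ defined by $\phi(h) = \KRHO{h}$ for $h \in H_L$ (extended $\Q$-linearly), and then identify its kernel with the line $\Q\cdot\theta$, where $\theta = \sum_{h \in H_L} h$. Lemma \ref{lemma:groupalgebra} will then yield $\III{g} \cong \Q[H_L]/\Q\theta \cong I(H_L)$.

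The map $\phi$ is well-defined because any two lifts $\tilde h, \tilde h' \in \pi_1(\Sigma_{g,1})$ of $h \in H_L$ differ by an element of $\KKK{g,1}$, and inner automorphisms act trivially on $\HH_1(\KKK{g,1};\Q)$.  It is surjective by the generating set for $\III{g}$ displayed just above the statement of the lemma.  For $\Mod_{g,1}$-equivariance, the essential point is that $\gamma$ is fixed by $\Mod_{g,1}$ (since mapping classes act as the identity on $\partial\Sigma_{g,1}$), so $f \cdot \KRHO{h} = \KRHO{f_\ast h}$ for $f \in \Mod_{g,1}$; this matches the permutation action on $\Q[H_L]$ induced by the symplectic representation on $H_L$.

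The key step is the geometric computation of $\ker\phi$.  Let $\tilde\Sigma \to \Sigma_{g,1}$ be the regular cover with deck group $H_L$ whose fundamental group is $\KKK{g,1}$, so $\HH_1(\tilde\Sigma;\Q) = \HH_1(\KKK{g,1};\Q)$.  Since $\gamma$ lies in $[\pi_1(\Sigma_{g,1}),\pi_1(\Sigma_{g,1})] \subset \KKK{g,1}$, the boundary $\partial \Sigma_{g,1}$ lifts to $L^{2g}$ disjoint boundary circles of $\tilde\Sigma$, naturally indexed by $H_L$.  The cover of $\Sigma_g$ associated to $\KKK{g}$ is obtained from $\tilde\Sigma$ by attaching a disk along each boundary circle, so the long exact sequence of the corresponding pair identifies the kernel of $\HH_1(\KKK{g,1};\Q) \to \HH_1(\KKK{g};\Q)$ with the image of $\HH_1(\partial\tilde\Sigma;\Q) \to \HH_1(\tilde\Sigma;\Q)$.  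A short diagram chase (using that $\HH_1(\Sigma_{g,1};\Q) \to \HH_1(\Sigma_g;\Q)$ is an isomorphism) shows this kernel coincides with $\III{g}$.  Unwinding the definition of $\KRHO{h} = [\gamma^{\tilde h}]$, one sees that up to a bijective re-indexing of $H_L$, the set $\{\KRHO{h} : h \in H_L\}$ consists precisely of the classes of the $L^{2g}$ boundary circles of $\tilde\Sigma$.

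To finish, I invoke the classical fact that for any compact orientable surface $S$ with nonempty boundary, the map $\HH_1(\partial S;\Q) \to \HH_1(S;\Q)$ has one-dimensional kernel spanned by the sum of the boundary classes; this follows from the long exact sequence of $(S,\partial S)$ together with Poincar\'e-Lefschetz duality.  Applied to $\tilde\Sigma$, this gives $\ker\phi = \Q\cdot\theta$, and combining with Lemma \ref{lemma:groupalgebra} completes the proof.  The main technical hurdle is the explicit matching of each $\KRHO{h}$ with the correct boundary class of $\tilde\Sigma$; once this correspondence is pinned down, the remaining steps are essentially formal.
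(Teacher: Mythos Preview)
Your proof is correct and follows essentially the same approach as the paper: both pass to the cover $\tilde\Sigma$ corresponding to $\KKK{g,1}$, identify $\III{g}$ with the span of the homology classes of the boundary components of $\tilde\Sigma$ (indexed by $H_L$ via the deck action on the lift $\tilde\gamma$), and use that the unique relation among these classes is that their sum vanishes. The paper is slightly terser and pins down the correspondence $\KRHO{v}\leftrightarrow v(\tilde\gamma)$ explicitly rather than ``up to re-indexing,'' but the content is the same.
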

\begin{proof}
Let $\ast \in \partial \Sigma_{g,1}$ be the basepoint and $\tilde{\ast} \in \CVR{g,1}$ be a lift
of $\ast$, so $K_{g,1} = \pi_1(\CVR{g,1},\tilde{\ast})$.  The subgroup
$\III{g} < \HH_1(\CVR{g,1};\Q)$ is exactly the subgroup generated by the homology classes of the
boundary components of $\CVR{g,1}$.
The group of deck transformations $H_L$ acts on these boundary components.  Let
$\gamma \in \pi_1(\Sigma_{g,1},\ast)$ be the simple closed curve that goes once around the boundary
component with the surface to its right.  Since $\gamma$
lifts to a simple closed curve $\tilde{\gamma} \in \pi_1(\CVR{g,1},\tilde{\ast})$,
the action of $H_L$ on the boundary components of $\tilde{\Sigma}$ is free.  For $v \in H_L$, let
$\KRHO{v} \in \III{g}$ denote the homology class of the $v$-translate of $\tilde{\gamma}$.
The only relation between the homology classes of the boundary components
of a surface with boundary is that their sum is $0$.  We conclude that $\III{g}$ is isomorphic to the
quotient of the $\Q$-vector space with basis the formal symbols
$\{\text{$\KRHO{v}$ $|$ $v \in H_L$}\}$ by the $1$-dimensional subspace generated by
$\sum_{v \in H_L} \KRHO{v}$.  This is exactly the augmentation ideal of $\Q[H_L]$, and we are done.
\end{proof}

Throughout the rest of this paper, we will denote by $\KRHO{v}$ the element of $\III{g}$ corresponding
to $v \in H_L$ as in the proof above.

\begin{remark}
It is clear that $\III{g} < \CCC{g,1}$, so the exact sequence \eqref{eqn:kkkiii} restricts to a short
exact sequence
$$0 \longrightarrow \III{g} \longrightarrow \CCC{g,1} \longrightarrow \CCC{g} \longrightarrow 0.$$
\end{remark}

\subsection{Actions on homology groups of abelian covers}

It is clear that $\Mod_{g,1}$ acts on $\KKK{g,1}$ and $\Mod_g^1$ acts on $\KKK{g}$.  Letting
$\beta$ be the boundary component of $\Sigma_{g,1}$, these
two mapping class groups are related by a short exact sequence
$$1 \longrightarrow \Z \longrightarrow \Mod_{g,1} \longrightarrow \Mod_g^1 \longrightarrow 1,$$
where $\Z$ is generated by $T_{\beta}$.  Since the loop around $\beta$ lies in $\KKK{g,1}$, the
action of $T_{\beta}$ on $\HH_1(\KKK{g,1};\Q)$ is trivial.  This implies that the action
of $\Mod_{g,1}$ on $\HH_1(\KKK{g,1};\Q)$ factors through an action of $\Mod_{g}^1$.

Let $\pi_1(\Sigma_g) < \Mod_g^1$ be the point-pushing subgroup.  The action of $\Mod_g^1$
on $\HH_1(\KKK{g};\Q)$ restricts to the action of $\pi_1(\Sigma_g)$ on $\HH_1(\KKK{g};\Q)$
induced by conjugation.  Restricting this action further to $\KKK{g} < \pi_1(\Sigma_g)$ thus
yields the trivial action.  However, if we instead restrict the action of $\Mod_g^1$ on
$\HH_1(\KKK{g,1};\Q)$ to $\KKK{g}$, we do {\em not} get a trivial action.

For $\gamma \in \KKK{g}$, denote by $\CLL{\gamma}$ the associated element of $\HH_1(\KKK{g};\Q)$.
Since $\KKK{g} < \Mod_g^1$ acts trivially on the kernel and cokernel of the short exact sequence
$$0 \longrightarrow \III{g} \longrightarrow \HH_1(\KKK{g,1};\Q) \stackrel{\rho}{\longrightarrow} \HH_1(\KKK{g};\Q) \longrightarrow 0,$$
the action of $\KKK{g}$ on $\HH_1(\KKK{g,1};\Q)$ is of the form
$$\gamma(x) = x + \REID{\CLL{\gamma}}{\rho(x)} \quad \quad \text{($\gamma \in \KKK{g}$ and $x \in \HH_1(\KKK{g,1};\Q)$)}$$
for some $\III{g}$-valued bilinear form $\REID{\cdot}{\cdot}$ on $\HH_1(\KKK{g};\Q)$.
This bilinear form has the following nice description.
Let $\langle \cdot, \cdot \rangle_{K}$ be the algebraic intersection pairing on
$\HH_1(\KKK{g};\Q) = \HH_1(\CVR{g};\Q)$.  The group $H_L = \HH_1(\Sigma_g;\Z/L)$ acts on $\HH_1(\CVR{g};\Q)$ via deck transformations.
The bilinear pairing in the following lemma first appeared in work of 
Reidemeister \cite{Reidemeister1, Reidemeister2} and has since been studied by
many people (see, e.g.,\ \cite{ChurchPixton, HempelIntersection, LooijengaPrym}).  We
will call it the {\em Reidemeister pairing}.

\begin{lemma}
\label{lemma:higherpairing}
For $\gamma \in \KKK{g}$ and $x \in \KKK{g}$, we have
$$\REID{\CLL{\gamma}}{x} = \sum_{v \in H_L} \langle v \cdot \CLL{\gamma}, x \rangle_{K} \KRHO{v}.$$
\end{lemma}
\begin{proof}
An immediate consequence of Lemma \ref{lemma:homologyaction}.
\end{proof}

\subsection{A key technical lemma}
\label{section:keylemma}

One of the linchpins of our proofs of our main theorems is the following lemma
about the action of $\KKK{g}$ on $\CCC{g,1} < \KKK{g,1}$.  Its proof
is lengthy and is given in \S \ref{section:mainlemma}.

\begin{lemma}
\label{lemma:mainlemma}
For $g \geq 4$ and $L \geq 2$, the map $\HH_1(\KKK{g};\CCC{g,1}) \rightarrow \HH_1(\Mod_{g}^1(L);\CCC{g})$
is the zero map.
\end{lemma}

\begin{remark}
The map $\HH_1(\KKK{g};\CCC{g,1}) \rightarrow \HH_1(\Mod_{g}^1(L);\CCC{g})$ factors through $\HH_1(\KKK{g};\CCC{g})$, and most
of our hard work is devoted to characterizing the image of $\HH_1(\KKK{g};\CCC{g,1})$ in $\HH_1(\KKK{g};\CCC{g})$.  It would
be much easier if we could instead prove that the map $\HH_1(\KKK{g};\CCC{g}) \rightarrow \HH_1(\Mod_g^1(L);\CCC{g})$ was the
zero map, but alas a careful examination of our proof of Theorem \ref{theorem:mainclosed} shows that this is not true.
\end{remark}

\noindent
In the course of proving Lemma \ref{lemma:mainlemma}, we will also prove the following.

\begin{lemma}
\label{lemma:killcg}
For $g \geq 3$ and $L \geq 2$, we have $(\CCC{g})_{\pi_1(\Sigma_g)} = (\CCC{g,1})_{\pi_1(\Sigma_g)} = 0$.
\end{lemma}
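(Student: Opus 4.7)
The plan is to handle the two equalities separately, both keyed to the short exact sequence
\[0 \to \III{g} \to \CCC{g,1} \to \CCC{g} \to 0\]
set up at the start of \S\ref{section:technicallemmas}.

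First I would address $(\CCC{g})_{\pi_1(\Sigma_g)} = 0$ by a transfer argument. The point-pushing subgroup $\pi_1(\Sigma_g) \subset \Mod_g^1$ acts on $\pi_1(\Sigma_g,\ast)$ by inner automorphisms (a standard consequence of the Birman exact sequence), so it acts on the normal subgroup $\KKK{g}$ by conjugation. Since conjugation by $\KKK{g}$ is trivial on $\HH_1(\KKK{g};\Q)$, the induced $\pi_1(\Sigma_g)$-action factors through $H_L = \pi_1(\Sigma_g)/\KKK{g}$. Lemma \ref{lemma:supertransfer} with $M = \Q$ identifies $(\HH_1(\KKK{g};\Q))_{H_L}$ with $\HH_1(\pi_1(\Sigma_g);\Q) = \HH_1(\Sigma_g;\Q)$, and combining this with the $H_L$-invariant splitting $\HH_1(\KKK{g};\Q) \cong \HH_1(\Sigma_g;\Q) \oplus \CCC{g}$ (in which $H_L$ fixes the first summand) forces $(\CCC{g})_{H_L} = 0$, as required.

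Next I would tackle $(\CCC{g,1})_{\pi_1(\Sigma_g)} = 0$ by applying the long exact sequence in $\pi_1(\Sigma_g)$-homology to the displayed short exact sequence. Because point-pushing lies in the kernel of the symplectic representation $\Mod_g^1 \to \Sp_{2g}(\Z)$, the subgroup $\pi_1(\Sigma_g)$ is contained in $\Mod_g^1(L)$, which by the remark following Lemma \ref{lemma:identifyi} acts trivially on $\III{g}$. Thus $\HH_0(\pi_1(\Sigma_g); \III{g}) = \III{g}$, and the previous paragraph lets the tail of the long exact sequence collapse to
\[\HH_1(\pi_1(\Sigma_g); \CCC{g}) \xrightarrow{\partial'} \III{g} \to (\CCC{g,1})_{\pi_1(\Sigma_g)} \to 0.\]
It therefore suffices to show $\partial'$ is surjective. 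By naturality of the connecting homomorphism along $\KKK{g} \hookrightarrow \pi_1(\Sigma_g)$, the analogous connecting map $\partial : \HH_1(\KKK{g}; \CCC{g}) \to \HH_0(\KKK{g};\III{g}) = \III{g}$ from \S\ref{section:technicallemmas} factors through $\partial'$, and Lemma \ref{lemma:sallofimage} supplies the surjectivity of $\partial$.

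The main obstacle is that, unlike $\CCC{g}$, the module $\CCC{g,1}$ does \emph{not} admit a $\pi_1(\Sigma_g)$-action that factors through $H_L$: the lift of a point-pushing class to $\Mod_{g,1}$ is a product of Dehn twists along parallel copies of the defining loop, which is not an inner automorphism of $\pi_1(\Sigma_{g,1})$. This is exactly why the clean transfer argument for $\CCC{g}$ cannot be reused for $\CCC{g,1}$, and why the proof must invoke the deep Lemma \ref{lemma:sallofimage} to produce enough $\pi_1(\Sigma_g)$-coboundaries in $\CCC{g,1}$ to annihilate $\III{g}$.
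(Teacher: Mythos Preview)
Your argument is correct, and its overall architecture matches the paper's: both proofs use the long exact sequence attached to $0 \to \III{g} \to \CCC{g,1} \to \CCC{g} \to 0$ together with the surjectivity of $\partial$ supplied by Lemma~\ref{lemma:sallofimage} to reduce the $\CCC{g,1}$ statement to the $\CCC{g}$ statement. The paper phrases this reduction as ``take $\KKK{g}$-coinvariants first to get $(\CCC{g,1})_{\KKK{g}} \cong \CCC{g}$, then take $H_L$-coinvariants,'' while you invoke naturality of the connecting map along $\KKK{g} \hookrightarrow \pi_1(\Sigma_g)$; these are equivalent.

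The genuine difference is in how $(\CCC{g})_{\pi_1(\Sigma_g)} = 0$ is established. The paper does an explicit computation: for $x,y \in \pi_1(\Sigma_g)$ one has $\CLL{x^L,y^L} = 0$ since $x^L,y^L \in \KKK{g}$, and expanding via Lemma~\ref{lemma:commutatoridentities2} gives $0 = \sum_{i,j} \CL{x,y}{i\CAB{x}+j\CAB{y}}$, which becomes $L^2 \CLL{x,y}$ after passing to coinvariants. Your route is more conceptual: the action factors through $H_L$, and Lemma~\ref{lemma:supertransfer} identifies $(\HH_1(\KKK{g};\Q))_{H_L}$ with $\HH_1(\Sigma_g;\Q)$, so the $\CCC{g}$-summand must die. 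Your approach is cleaner and makes transparent \emph{why} the coinvariants vanish (it is the transfer), while the paper's calculation has the virtue of being entirely self-contained and not relying on the equivariance of the splitting in Lemma~\ref{lemma:decomposecover}. Both are perfectly valid.
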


One useful consequence of Lemma \ref{lemma:killcg} is the following.

\begin{lemma}
\label{lemma:boundarytopuncture}
For $g \geq 3$ and $L \geq 2$, the natural map $\HH_1(\Mod_{g,1}(L);\CCC{g}) \rightarrow \HH_1(\Mod_{g}^1(L);\CCC{g})$
is an isomorphism.
\end{lemma}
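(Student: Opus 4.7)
The plan is to compare the two homology groups via the Birman exact sequence
$$1 \longrightarrow \langle T_{\beta} \rangle \longrightarrow \Mod_{g,1}(L) \longrightarrow \Mod_{g}^1(L) \longrightarrow 1,$$
whose kernel is the infinite cyclic group generated by the Dehn twist about $\partial \Sigma_{g,1}$. The key observation is that $T_{\beta}$ acts trivially on $\CCC{g}$. Indeed, $\CCC{g}$ is intrinsically a $\Mod_{g}^1$-module (it is manufactured from $\pi_1(\Sigma_g)$, which $\Mod_{g,1}$ does not see directly), and the $\Mod_{g,1}(L)$-action on it is induced through the Birman surjection above, in whose kernel $T_{\beta}$ lies.

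Given this trivial action of the kernel, I would feed the sequence into the 5-term exact sequence in group homology. Using $\HH_1(\langle T_{\beta} \rangle; \CCC{g}) \cong \CCC{g}$ and $(\CCC{g})_{\langle T_{\beta} \rangle} = \CCC{g}$, the relevant portion becomes
$$(\CCC{g})_{\Mod_{g}^1(L)} \longrightarrow \HH_1(\Mod_{g,1}(L); \CCC{g}) \longrightarrow \HH_1(\Mod_{g}^1(L); \CCC{g}) \longrightarrow 0.$$
Surjectivity of the right-hand map is automatic, so the problem reduces to showing $(\CCC{g})_{\Mod_{g}^1(L)} = 0$. For this, I would invoke the fact from Section \ref{section:mcgprelim} that the point-pushing subgroup $\pi_1(\Sigma_g)$ sits inside $\Mod_{g}^1(L)$. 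Coinvariants under a larger group are a quotient of coinvariants under a subgroup, so $(\CCC{g})_{\Mod_{g}^1(L)}$ is a quotient of $(\CCC{g})_{\pi_1(\Sigma_g)}$, which was already shown to vanish in Lemma \ref{lemma:killcg}.

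The proof itself is short, since all the substance has been absorbed into Lemma \ref{lemma:killcg} (and ultimately into Lemma \ref{lemma:sallofimage}). The only point that requires a moment's care is matching the connecting map in the 5-term exact sequence with the ``natural map'' named in the statement; but both are induced by the Birman projection $\Mod_{g,1}(L) \twoheadrightarrow \Mod_{g}^1(L)$, so they agree. No genuine obstacle arises beyond correctly verifying the trivial action of $T_{\beta}$ on $\CCC{g}$.
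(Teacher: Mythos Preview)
Your proof is correct and follows essentially the same route as the paper: apply the 5-term exact sequence to the central extension $1 \to \langle T_\beta \rangle \to \Mod_{g,1}(L) \to \Mod_g^1(L) \to 1$, use that $T_\beta$ acts trivially on $\CCC{g}$ to identify the relevant term as $(\CCC{g})_{\Mod_g^1(L)}$, and kill it via Lemma~\ref{lemma:killcg}. The paper's argument is identical, only terser; your added remark that $(\CCC{g})_{\Mod_g^1(L)}$ is a quotient of $(\CCC{g})_{\pi_1(\Sigma_g)}$ makes explicit exactly how Lemma~\ref{lemma:killcg} is being invoked.
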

\begin{proof}
Let $\beta$ be the boundary component of $\Sigma_{g,1}$.  We have a short exact sequence
$$1 \longrightarrow \Z \longrightarrow \Mod_{g,1}(L) \longrightarrow \Mod_g^1(L) \longrightarrow 1,$$
where $\Z = \Span{T_{\beta}}$.  The last 3 terms of the
associated 5-term exact sequence with coefficients $\CCC{g}$ are
$$(\CCC{g})_{\Mod_g^1(L)} \longrightarrow \HH_1(\Mod_{g,1}(L);\CCC{g}) \longrightarrow \HH_1(\Mod_g^1(L);\CCC{g}) \longrightarrow 0.$$
By Lemma \ref{lemma:killcg}, we have $(\CCC{g})_{\Mod_g^1(L)} = 0$, and the lemma follows.
\end{proof}

\section{Proofs of the main theorems}
\label{section:mainproofs}

We now turn to the proofs of our main theorems.  In this section, we will assume the truth 
of Lemmas \ref{lemma:mainlemma} and \ref{lemma:killcg}, which
are proven in subsequent sections.

\subsection{Surfaces with boundary}

We begin with Theorem \ref{theorem:mainboundary}, which
asserts that if $g \geq 4$ and $L \geq 2$, then
$$\HH_1(\Mod_{g,1};\HH_1(\KKK{g,1};\Q)) \cong \HH_1(\Mod_{g,1}(L);\HH_1(\KKK{g,1};\Q)) \cong \Q.$$
As was noted
in \S \ref{section:cast}, we can use Lemma \ref{lemma:decomposecover} to obtain a $\Mod_{g,1}$-invariant
decomposition
$$\HH_1(\KKK{g,1};\Q) \cong \HH_1(\Sigma_{g,1};\Q) \oplus \CCC{g,1}.$$
This implies that
$$\HH_1(\Mod_{g,1};\HH_1(\KKK{g,1};\Q)) \cong \HH_1(\Mod_{g,1};\HH_1(\Sigma_{g,1};\Q)) \oplus
\HH_1(\Mod_{g,1};\CCC{g,1}),$$
and similarly for $\Mod_{g,1}(L)$.  Theorem \ref{theorem:h1modlh} says that
$$\HH_1(\Mod_{g,1};\HH_1(\Sigma_{g,1};\Q)) \cong \HH_1(\Mod_{g,1}(L);\HH_1(\Sigma_{g,1};\Q)) \cong \Q.$$
To prove Theorem \ref{theorem:mainboundary}, therefore, it is enough to prove the following theorem.

\begin{theorem}
\label{theorem:mainboundary2}
For $g \geq 4$ and $L \geq 2$, we have
$$\HH_1(\Mod_{g,1};\CCC{g,1}) \cong \HH_1(\Mod_{g,1}(L);\CCC{g,1}) = 0.$$
\end{theorem}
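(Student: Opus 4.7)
The plan is to reduce the second vanishing to the first by transfer: Lemma \ref{lemma:transfer} produces a surjection $\HH_1(\Mod_{g,1}(L);\CCC{g,1}) \twoheadrightarrow \HH_1(\Mod_{g,1};\CCC{g,1})$, so it suffices to prove $\HH_1(\Mod_{g,1}(L);\CCC{g,1}) = 0$. Feeding the short exact sequence $0 \to \III{g} \to \CCC{g,1} \to \CCC{g} \to 0$ into the long exact sequence in $\Mod_{g,1}(L)$-homology, and using (i) the remark after Lemma \ref{lemma:identifyi} together with Theorem \ref{theorem:h1modl} to kill $\HH_1(\Mod_{g,1}(L);\III{g})$, and (ii) Lemma \ref{lemma:boundarytopuncture} to rewrite the $\CCC{g}$-term, one obtains the four-term exact sequence
\[
0 \longrightarrow \HH_1(\Mod_{g,1}(L);\CCC{g,1}) \longrightarrow \HH_1(\Mod_g^1(L);\CCC{g}) \stackrel{\partial}{\longrightarrow} \III{g}.
\]
It therefore suffices to show that $\partial$ is injective; in fact I will show that $\partial$ is an isomorphism.

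The next step is to compute $\HH_1(\Mod_g^1(L);\CCC{g})$. The five-term exact sequence applied to the Birman exact sequence $1 \to \pi_1(\Sigma_g) \to \Mod_g^1(L) \to \Mod_g(L) \to 1$, combined with the vanishing $(\CCC{g})_{\pi_1(\Sigma_g)} = 0$ from Lemma \ref{lemma:killcg}, yields
\[
\HH_1(\Mod_g^1(L);\CCC{g}) \cong \bigl(\HH_1(\pi_1(\Sigma_g);\CCC{g})\bigr)_{\Mod_g(L)}.
\]
Since $\KKK{g}$ is a finite-index normal subgroup of $\pi_1(\Sigma_g)$ with quotient $H_L$, Lemma \ref{lemma:supertransfer} identifies this further with $\bigl(\HH_1(\KKK{g};\CCC{g})\bigr)_{\Mod_g^1(L)}$. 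Lemma \ref{lemma:kills} says that the image of $\SSS$ in this coinvariant quotient vanishes, so I obtain a surjection
\[
q : \HH_1(\KKK{g};\CCC{g}) / \Span{\SSS} \twoheadrightarrow \HH_1(\Mod_g^1(L);\CCC{g}).
\]

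To finish, I would invoke naturality of the connecting homomorphism for $0 \to \III{g} \to \CCC{g,1} \to \CCC{g} \to 0$ with respect to the inclusion $\KKK{g} < \Mod_g^1(L)$. Both groups act trivially on $\III{g}$, so the induced map $\HH_0(\KKK{g};\III{g}) \to \HH_0(\Mod_g^1(L);\III{g})$ is the identity on $\III{g}$; the resulting commutative square shows that the composition
\[
\HH_1(\KKK{g};\CCC{g}) / \Span{\SSS} \stackrel{q}{\longrightarrow} \HH_1(\Mod_g^1(L);\CCC{g}) \stackrel{\partial}{\longrightarrow} \III{g}
\]
is exactly the map induced by the $\KKK{g}$-level connecting homomorphism. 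Lemma \ref{lemma:sallofimage} asserts that this induced map is an isomorphism, which forces both $q$ and $\partial$ to be isomorphisms. In particular $\HH_1(\Mod_{g,1}(L);\CCC{g,1}) = 0$, completing the proof.

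The hardest part of the whole argument is not in this assembly, but rather in the two postponed technical inputs, Lemmas \ref{lemma:sinimage} and \ref{lemma:sallofimage} (the former subsumed into the latter for present purposes), which precisely pin down the kernel of the connecting map as $\Span{\SSS}$. Granted those results together with the vanishings in Lemmas \ref{lemma:killcg} and \ref{lemma:kills}, the argument above amounts to unwinding two five-term sequences and one naturality square — essentially formal bookkeeping.
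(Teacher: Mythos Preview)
Your argument is correct and uses exactly the same technical inputs as the paper (Lemmas \ref{lemma:sinimage}, \ref{lemma:sallofimage}, \ref{lemma:killcg}, \ref{lemma:boundarytopuncture}, \ref{lemma:kills}, and Theorem \ref{theorem:h1modl}), but the assembly is genuinely different. The paper attacks $\HH_1(\Mod_{g,1}(L);\CCC{g,1})$ via the Birman exact sequence for $\Mod_{g,1}(L)$ with kernel $\pi_1(U\Sigma_g)$: it shows the edge map $(\HH_1(\pi_1(U\Sigma_g);\CCC{g,1}))_{\Mod_g(L)} \to \HH_1(\Mod_{g,1}(L);\CCC{g,1})$ is simultaneously surjective and zero, by factoring it through $\HH_1(\KKK{g};\CCC{g,1}) \to \HH_1(\KKK{g};\CCC{g}) \to \HH_1(\Mod_g^1(L);\CCC{g})$ and invoking $j(\SSS)=0$. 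You instead first embed $\HH_1(\Mod_{g,1}(L);\CCC{g,1})$ into $\HH_1(\Mod_g^1(L);\CCC{g})$ via the coefficient long exact sequence plus Lemma \ref{lemma:boundarytopuncture}, then compute $\HH_1(\Mod_g^1(L);\CCC{g})$ directly from the Birman sequence for $\Mod_g^1(L)$ together with supertransfer for $\KKK{g} \lhd \pi_1(\Sigma_g)$, obtaining $\HH_1(\Mod_g^1(L);\CCC{g}) \cong (\HH_1(\KKK{g};\CCC{g}))_{\Mod_g^1(L)}$; Lemma \ref{lemma:kills} then gives the surjection $q$, and naturality plus Lemma \ref{lemma:sallofimage} force $q$ and $\partial$ to be isomorphisms. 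Your route has the pleasant side effect of proving $\HH_1(\Mod_g^1(L);\CCC{g}) \cong \III{g}$ in the same breath, whereas the paper establishes that isomorphism separately (in the theorem following the proof of Theorem \ref{theorem:mainmcg}) by \emph{using} Theorem \ref{theorem:mainboundary2} as input. So your reorganization is slightly more economical; the paper's order has the virtue of keeping the $\Mod_{g,1}$ and $\Mod_g^1$ stories cleanly separated.
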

\begin{proof}
Since $\Mod_{g,1}(L)$ is a finite-index subgroup of $\Mod_{g,1}$, Lemma \ref{lemma:transfer} implies that it is
enough to prove that $\HH_1(\Mod_{g,1}(L);\CCC{g,1}) = 0$.  Associated to the Birman exact sequence
$$1 \longrightarrow \pi_1(U\Sigma_g) \longrightarrow \Mod_{g,1}(L) \longrightarrow \Mod_{g}(L) \longrightarrow 1$$
is a 5-term exact sequence in homology with coefficients in $\CCC{g,1}$.  The last 3 terms of this are
$$(\HH_1(\pi_1(U\Sigma_g);\CCC{g,1}))_{\Mod_{g,1}(L)} \stackrel{f}{\longrightarrow} \HH_1(\Mod_{g,1}(L);\CCC{g,1}) \longrightarrow \HH_1(\Mod_g(L);(\CCC{g,1})_{\pi_1(U\Sigma_g)}) \longrightarrow 0.$$
Lemma \ref{lemma:killcg} says that
$$(\CCC{g,1})_{\pi_1(U\Sigma_g)} = (\CCC{g,1})_{\pi_1(\Sigma_g)} = 0.$$
To prove the theorem, therefore, it is enough to show that $f = 0$.  This is equivalent
to showing that the map
$$f' : \HH_1(\pi_1(U\Sigma_g);\CCC{g,1}) \rightarrow \HH_1(\Mod_{g,1}(L);\CCC{g,1})$$
is the zero map.

Our goal is to deduce the fact that $f'=0$ from Lemma \ref{lemma:mainlemma}.
To do this, we perform a series of reductions.  From the short exact sequence
$$0 \longrightarrow \III{g} \longrightarrow \CCC{g,1} \longrightarrow \CCC{g} \longrightarrow 0$$
of $\Mod_{g,1}(L)$-modules we obtain a long exact sequence in homology.  This long exact sequence
contains the segment
$$\HH_1(\Mod_{g,1}(L);\III{g}) \longrightarrow \HH_1(\Mod_{g,1}(L);\CCC{g,1}) \longrightarrow \HH_1(\Mod_{g,1}(L);\CCC{g}).$$
Since $\Mod_{g,1}(L)$ acts trivially on $\III{g}$, Theorem \ref{theorem:h1modl} implies that
$\HH_1(\Mod_{g,1}(L);\III{g})=0$.  It follows that the map
$\HH_1(\Mod_{g,1}(L);\CCC{g,1}) \rightarrow \HH_1(\Mod_{g,1}(L);\CCC{g})$
is injective.  Lemma \ref{lemma:boundarytopuncture} says that
$\HH_1(\Mod_{g,1}(L);\CCC{g}) \cong \HH_1(\Mod_{g}^1(L);\CCC{g})$, so we deduce that
to prove that $f'=0$, it is enough to prove that the map
$$\HH_1(\pi_1(U\Sigma_g);\CCC{g,1}) \longrightarrow \HH_1(\Mod_{g}^1(L);\CCC{g})$$
is the zero map.  This map factors through the map
\begin{equation}
\label{eqn:mapzero}
\HH_1(\pi_1(\Sigma_g);\CCC{g,1}) \longrightarrow \HH_1(\Mod_{g}^1(L);\CCC{g}).
\end{equation}
Since $\KKK{g}$ is a finite-index subgroup of $\pi_1(\Sigma_g)$, Lemma \ref{lemma:transfer}
says that the map
$\HH_1(\KKK{g};\CCC{g,1}) \longrightarrow \HH_1(\pi_1(\Sigma_g);\CCC{g,1})$
is surjective.  Thus to show that the map in \eqref{eqn:mapzero} vanishes, it is enough to
show that the map
$$\HH_1(\KKK{g};\CCC{g,1}) \rightarrow \HH_1(\Mod_{g}^1(L);\CCC{g})$$
vanishes, which is exactly the content of Lemma \ref{lemma:mainlemma}.
\end{proof}

\subsection{Closed surfaces}

We now turn to Theorems \ref{theorem:mainmcg} and \ref{theorem:mainclosed}, which assert
that if $g \geq 4$ and $L \geq 2$, then
$$\HH_1(\Mod_{g}^1;\HH_1(\KKK{g};\Q)) \cong \Q^{\tau(L)} \quad \text{and} \quad \HH_1(\Mod_{g}^1(L);\HH_1(\KKK{g};\Q)) \cong \Q[H_L].$$
Here $\tau(L)$ is the number of positive divisors of $L$ and $H_L \cong \HH_1(\Sigma_g;\Z/L)$.  Also, the
second isomorphism should be equivariant with respect to $\Mod_{g}^1$ actions on $\HH_1(\Mod_{g}^1(L);\HH_1(\KKK{g};\Q))$
and $\Q[H_L]$.  We begin by deriving Theorem \ref{theorem:mainmcg} from Theorem \ref{theorem:mainclosed}.

\begin{proof}[{Proof of Theorem \ref{theorem:mainmcg}, assuming Theorem \ref{theorem:mainclosed}}]
Lemma \ref{lemma:supertransfer} implies that
$$\HH_1(\Mod_{g}^1;\HH_1(\KKK{g};\Q)) \cong (\HH_1(\Mod_{g}^1(L);\HH_1(\KKK{g};\Q)))_{\Mod_{g}^1}.$$
Applying Theorem \ref{theorem:mainclosed}, we must show that $(\Q[H_L])_{\Mod_{g}^1} \cong \Q^{\tau(L)}$.  The
vector space $\Q[H_L]$ has a basis that is permuted by the action of $\Mod_{g}^1$, namely the elements of $H_L$.
It is enough, therefore, to show that there are $\tau(L)$ orbits of the action of $\Mod_{g}^1$ on $H_L$.  This action
factors through the surjection $\Mod_{g}^1 \rightarrow \Sp_{2g}(\Z/L)$.  Let $v \in H_L$ be a fixed primitive vector, and
set
$$X = \{\text{$cv$ $|$ $c$ is a positive divisor of $L$}\} \subset H_L.$$
The set $X$ has cardinality $\tau(L)$, and clearly no two elements of $X$ are in the same $\Sp_{2g}(\Z/L)$-orbit.
Also, if $w \in H_L$,
then there is a primitive vector $w'$ and a positive divisor $c$ of $L$ such that $w = c w'$.  Since $\Sp_{2g}(\Z/L)$ acts
transitively on the set of primitive vectors, there is some $\phi \in \Sp_{2g}(\Z/L)$ such that $v = \phi(w')$.  Thus
$w$ is in the same $\Sp_{2g}(\Z/L)$-orbit as $c v \in X$.  We conclude that $X$ contains a unique representative from every
$\Sp_{2g}(\Z/L)$-orbit, and we are done.
\end{proof}

Finally, we discuss Theorem \ref{theorem:mainclosed}.
Lemma \ref{lemma:decomposecover} implies that there is a $\Mod_g^1$-invariant decomposition
$$\HH_1(\KKK{g};\Q) \cong \HH_1(\Sigma_{g};\Q) \oplus \CCC{g},$$
so we have a $\Mod_{g}^1$-invariant decomposition
$$\HH_1(\Mod_{g}^1(L);\HH_1(\KKK{g};\Q)) \cong \HH_1(\Mod_{g}^1(L);\HH_1(\Sigma_{g};\Q)) \oplus
\HH_1(\Mod_{g}^1(L);\CCC{g}).$$
Also, Theorem \ref{theorem:h1modlh} says that
$$\HH_1(\Mod_{g}^1(L);\HH_1(\Sigma_{g};\Q)) \cong \Q,$$
so we obtain a $\Mod_{g}^1$-invariant decomposition
$$\HH_1(\Mod_{g}^1(L);\HH_1(\KKK{g};\Q)) \cong \Q \oplus \HH_1(\Mod_{g}^1(L);\CCC{g}).$$
By Lemma \ref{lemma:groupalgebra}, we have $\Q[H_L] \cong \Q \oplus I$, where $I$ is the augmentation ideal
of $\Q[H_L]$.  Lemma \ref{lemma:identifyi} says that $\III{g}$ is isomorphic to the augmentation ideal
of $\Q[H_L]$, so we conclude that it is enough to prove the following theorem.

\begin{theorem}
For $g \geq 4$ and $L \geq 2$, we have a $\Mod_{g}^1$-equivariant isomorphism
$\HH_1(\Mod_{g}^1(L);\CCC{g}) \cong \III{g}$.
\end{theorem}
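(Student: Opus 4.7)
The plan is to exhibit $\HH_1(\Mod_g^1(L);\CCC{g})$ as a coinvariant quotient of $\HH_1(\KKK{g};\CCC{g})$ and then identify that quotient with $\III{g}$ using the three preparatory results (Lemmas \ref{lemma:sinimage}, \ref{lemma:sallofimage}, and \ref{lemma:kills}) that have already been established.

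First, I would feed the Birman exact sequence
$$1 \longrightarrow \pi_1(\Sigma_g) \longrightarrow \Mod_g^1(L) \longrightarrow \Mod_g(L) \longrightarrow 1$$
into the 5-term exact sequence in $\CCC{g}$-homology. By Lemma \ref{lemma:killcg}, $(\CCC{g})_{\pi_1(\Sigma_g)} = 0$, so the outer terms involving $\HH_{\ast}(\Mod_g(L);(\CCC{g})_{\pi_1(\Sigma_g)})$ both vanish, and what remains is an isomorphism $(\HH_1(\pi_1(\Sigma_g);\CCC{g}))_{\Mod_g(L)} \cong \HH_1(\Mod_g^1(L);\CCC{g})$. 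Applying Lemma \ref{lemma:supertransfer} to the finite-index normal subgroup $\KKK{g} \lhd \pi_1(\Sigma_g)$ and then iterating coinvariants yields the cleaner identification
$$\HH_1(\Mod_g^1(L);\CCC{g}) \cong (\HH_1(\KKK{g};\CCC{g}))_{\Mod_g^1(L)}.$$

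Next, I would pin down the kernel $K$ of the resulting quotient map $\pi : \HH_1(\KKK{g};\CCC{g}) \twoheadrightarrow \HH_1(\Mod_g^1(L);\CCC{g})$. Lemma \ref{lemma:kills} gives $\Span{\SSS} \subseteq K$ directly. For the reverse inclusion, I would use the connecting map $\partial : \HH_1(\KKK{g};\CCC{g}) \to \III{g}$, which is $\Mod_g^1(L)$-equivariant since $0 \to \III{g} \to \CCC{g,1} \to \CCC{g} \to 0$ is a sequence of $\Mod_g^1(L)$-modules. The key observation is that $\Mod_g^1(L)$ acts trivially on $\III{g}$: the remark after Lemma \ref{lemma:identifyi} gives this for $\Mod_{g,1}(L)$, and since $T_{\beta}$ acts trivially on $\CCC{g,1}$, the action descends to $\Mod_g^1(L)$. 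Consequently every action difference $\phi \cdot v - v$ with $\phi \in \Mod_g^1(L)$ lies in $\ker \partial$, and together with $\Span{\SSS} \subseteq \ker \partial$ (from Lemma \ref{lemma:sallofimage}) this gives $K \subseteq \ker \partial = \Span{\SSS}$. Hence $K = \Span{\SSS}$.

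Combining the two steps produces the isomorphism $\HH_1(\Mod_g^1(L);\CCC{g}) \cong \HH_1(\KKK{g};\CCC{g})/\Span{\SSS} \cong \III{g}$, with the second map induced by $\partial$. The $\Mod_g^1$-equivariance follows by naturality: $\Mod_g^1$ acts on $\HH_1(\Mod_g^1(L);\CCC{g})$ via conjugation on its normal subgroup $\Mod_g^1(L)$ and on $\III{g} \subset \CCC{g,1}$ via the already-defined $\Mod_g^1$-action on $\CCC{g,1}$, and every map in the argument (the 5-term sequence, Lemma \ref{lemma:supertransfer}, and $\partial$) is natural. I do not anticipate a serious obstacle here, since all the technical difficulty has been packaged inside Lemmas \ref{lemma:sinimage}, \ref{lemma:sallofimage}, and \ref{lemma:kills}; the one point requiring genuine care is verifying that $\Mod_g^1(L)$ acts trivially on $\III{g}$, which is handled by the $T_{\beta}$ observation above.
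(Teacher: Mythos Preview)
Your argument is correct and in fact yields the same isomorphism as the paper (namely the connecting map $\partial' : \HH_1(\Mod_g^1(L);\CCC{g}) \to \III{g}$ for the short exact sequence $0 \to \III{g} \to \CCC{g,1} \to \CCC{g} \to 0$), but you reach it by a genuinely different route. The paper passes through the boundary case: it invokes Lemma \ref{lemma:boundarytopuncture} to replace $\Mod_g^1(L)$ by $\Mod_{g,1}(L)$ and then reads off the isomorphism from the long exact sequence in $\Mod_{g,1}(L)$-homology, using Theorem \ref{theorem:mainboundary2} (the vanishing $\HH_1(\Mod_{g,1}(L);\CCC{g,1})=0$) and Lemma \ref{lemma:killcg} to kill the flanking terms. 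Your approach instead stays on the punctured surface, identifies $\HH_1(\Mod_g^1(L);\CCC{g})$ with $(\HH_1(\KKK{g};\CCC{g}))_{\Mod_g^1(L)}$ via the Birman five-term sequence and Lemma \ref{lemma:supertransfer}, and then pins down the kernel of the coinvariant quotient as exactly $\Span{\SSS}$ by sandwiching it between Lemma \ref{lemma:kills} and $\ker\partial$.

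What your approach buys is that it bypasses Theorem \ref{theorem:mainboundary2} (and with it Theorem \ref{theorem:h1modl} and Lemma \ref{lemma:boundarytopuncture}) entirely, deducing the present theorem directly from Lemmas \ref{lemma:sinimage}, \ref{lemma:sallofimage}, \ref{lemma:kills}, and \ref{lemma:killcg}. What the paper's approach buys is economy of exposition: since Theorem \ref{theorem:mainboundary2} is a main result in its own right, once it is in hand the present theorem drops out of a four-term exact sequence with no further appeal to $\SSS$. One small point worth making explicit in your write-up: the inclusion $K \subseteq \ker\partial$ is most cleanly seen via the commutative square relating $\partial$ on $\HH_1(\KKK{g};\CCC{g})$ to the connecting map on $\HH_1(\Mod_g^1(L);\CCC{g})$ (exactly the diagram appearing in Case 1 of the proof of Lemma \ref{lemma:sinimage}), which immediately gives $\ker j \subseteq \ker\partial$ without needing to unpack the coinvariants description of $K$.
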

\begin{proof}
Lemma \ref{lemma:boundarytopuncture} says that there is an isomorphism
\begin{equation}
\label{eqn:boundarytopuncture}
\HH_1(\Mod_{g,1}(L);\CCC{g}) \cong \HH_1(\Mod_{g}^1(L);\CCC{g}).
\end{equation}
The action of $\Mod_{g,1}$ on $\HH_1(\Mod_{g,1}(L);\CCC{g})$ factors through $\Mod_{g}^1$, and it is easy to see
that the isomorphism in \eqref{eqn:boundarytopuncture} is $\Mod_{g}^1$-equivariant.
We deduce that it is enough to construct a $\Mod_{g,1}$-equivariant isomorphism $\HH_1(\Mod_{g,1}(L);\CCC{g}) \cong \III{g}$.
The long exact sequence in $\Mod_{g,1}(L)$-homology associated to the short exact sequence
$$0 \longrightarrow \III{g} \longrightarrow \CCC{g,1} \longrightarrow \CCC{g} \longrightarrow 0$$
of $\Mod_{g,1}(L)$-modules contains the segment
$$\HH_1(\Mod_{g,1}(L);\CCC{g,1}) \longrightarrow \HH_1(\Mod_{g,1}(L);\CCC{g}) \longrightarrow \III{g} \longrightarrow (\CCC{g,1})_{\Mod_{g,1}(L)}.$$
Here we are using the fact that $\Mod_{g,1}(L)$ acts trivially on $\III{g}$, so
$$\HH_0(\Mod_{g,1}(L);\III{g}) = (\III{g})_{\Mod_{g,1}(L)} = \III{g}.$$
Theorem \ref{theorem:mainboundary2} says that $\HH_1(\Mod_{g,1}(L);\CCC{g,1}) = 0$, and Lemma \ref{lemma:killcg} implies
that $(\CCC{g,1})_{\Mod_{g,1}(L)} = 0$.  We obtain an isomorphism $\HH_1(\Mod_{g,1}(L);\CCC{g}) \cong \III{g}$, which
is easily verified to be $\Mod_{g,1}$-equivariant.  The theorem follows.
\end{proof}

\section{Skeleton of the proof of Lemma \ref{lemma:mainlemma}}
\label{section:mainlemma}

This section sets the stage for the remainder of the paper by reducing
the proof of Lemma \ref{lemma:mainlemma} to two further lemmas which
are proven in the remaining sections.  It also
proves Lemma \ref{lemma:killcg}.  This is all done
in \S \ref{section:mainlemmaproof}.  Before that, \S \ref{section:notation}
sets up some notation which will be used in the remainder of the paper.

\subsection{Notation for $\KKK{g}$}
\label{section:notation}

In this section, we will introduce notation for elements of $\HH_1(\KKK{g};\Q)$.  Let $\rho : \KKK{g} \rightarrow \HH_1(\KKK{g};\Q)$
be the abelianization map.  Recall that the notation $g_1^{g_2}$ stands for $g_2 g_1 g_2^{-1}$.  Observe that
if $a \in \KKK{g}$ and $x,y \in \pi_1(\Sigma_g)$ are such that $x y^{-1} \in \KKK{g}$, then
$$\rho(a^x) = \rho(a^{(x y^{-1})y}) = \rho((xy^{-1})(a^y)(xy^{-1})^{-1}) = \rho(xy^{-1})+\rho(a^y)-\rho(xy^{-1}) = \rho(a^y).$$
This implies that for $a \in \KKK{g}$ and $x,y \in \pi_1(\Sigma_g)$ and $v \in H_L$, we may unambiguously define
$$\CL{a}{v} = \rho(a^{\tilde{v}}) \quad \quad \text{and} \quad \quad \CL{x,y}{v} = \rho([x,y]^{\tilde{v}}),$$ 
where $\tilde{v} \in \pi_1(\Sigma_g)$ is any lift of $v$ under the
map $\pi_1(\Sigma_g) \rightarrow H_L$.  

As another bit of notation, for $x \in \pi_1(\Sigma_g)$ denote by $\CAB{x}$ the associated element of 
$H_L = \HH_1(\Sigma_g;\Z/L)$.

With this notation, we have the following identities. 

\begin{lemma}
\label{lemma:commutatoridentities}
\mbox{}
\begin{enumerate}
\item For $a \in \KKK{g}$ and $x \in \pi_1(\Sigma_g)$, we have $\CLL{a,x} = \CLL{a} - \CL{a}{\CAB{x}}$.
\item For $x,y \in \pi_1(\Sigma_g)$, we have $\CLL{x,y} = -\CLL{y,x}$.
\item For $x,y,z \in \pi_1(\Sigma_g)$, we have $\CLL{xy,z} = \CLL{x,z} + \CL{y,z}{\CAB{x}}$.  
\item For $x,y \in \pi_1(\Sigma_g)$, we have $\CLL{x^{-1},y} = - \CL{x,y}{-\CAB{x}}$.
\end{enumerate}
\end{lemma}
\begin{proof}
Items 1 and 2 are obvious, item 3 follows from the commutator identity $[xy,z] = [x,z][y,z]^x$, and
item 4 follows from item 3.
\end{proof}

\subsection{Skeleton of the proof of Lemma \ref{lemma:mainlemma}}
\label{section:mainlemmaproof}

Recall that we want to prove that the map
$$\HH_1(\KKK{g};\CCC{g,1}) \rightarrow \HH_1(\Mod_{g}^1(L);\CCC{g})$$
is the zero map.  This map factors through $\HH_1(\KKK{g};\CCC{g})$.  Since
$\KKK{g}$ acts trivially on $\CCC{g}$, this latter group is isomorphic
to $\HH_1(\KKK{g};\Z) \otimes \CCC{g}$.

Our first order of business is
to characterize the image of $\HH_1(\KKK{g};\CCC{g,1})$ in $\HH_1(\KKK{g};\CCC{g})$.
The long exact sequence in $\KKK{g}$-homology associated to the short exact sequence
$$0 \longrightarrow \III{g} \longrightarrow \CCC{g,1} \longrightarrow \CCC{g} \longrightarrow 0$$
of $\KKK{g}$-modules contains the segment
\begin{equation}
\label{eqn:kkklong}
\HH_1(\KKK{g};\CCC{g,1}) \longrightarrow \HH_1(\KKK{g};\Z) \otimes \CCC{g} \stackrel{\partial}{\longrightarrow} \III{g}.
\end{equation}
Here we have used the fact that $\HH_0(\KKK{g};\III{g}) = (\III{g})_{\KKK{g}} = \III{g}$.
The image of $\HH_1(\KKK{g};\CCC{g,1})$ in $\HH_1(\KKK{g};\CCC{g})$ is the kernel of $\partial$.
It turns out that $\partial$ is closely related to the Reidemeister pairing $\REID{\cdot}{\cdot}$ from
Lemma \ref{lemma:higherpairing}.

\begin{lemma}
\label{lemma:boundary}
The map $\partial : \HH_1(\KKK{g};\Z) \otimes \CCC{g} \rightarrow \III{g}$ is the
restriction of the linear map $\HH_1(\KKK{g};\Q) \otimes \HH_1(\KKK{g};\Q) \rightarrow \III{g}$
induced by the bilinear Reidemeister pairing $\REID{\cdot}{\cdot}$.
\end{lemma}
\begin{proof}
Consider $f \otimes y \in \HH_1(\KKK{g};\Z) \otimes \CCC{g}$.  Tracing through
the construction of the long exact sequence (see, e.g.,\ \cite[\S III.7]{BrownCohomology}), we can calculate
$\partial(f \otimes y)$ as follows.  Lift $y$ to $\tilde{y} \in \CCC{g,1}$.  Then
$$\partial(f \otimes y) = f(\tilde{y}) - \tilde{y} \in \III{g}.$$
The lemma then follows from Lemma \ref{lemma:higherpairing}.
\end{proof}

We thus want to determine which pairs of elements evaluate to zero under $\REID{\cdot}{\cdot}$.
This will require the following definitions.

\begin{definition}
Consider sets of curves $s = \{x_1,\ldots,x_n\} \subset \pi_1(\Sigma_g)$ and
$s' = \{y_1,\ldots,y_m\} \subset \pi_1(\Sigma_g)$.
\begin{itemize}
\item The sets $s$ and $s'$ are {\em essentially separate} if there exist connected subsurfaces
$X$ and $X'$ of $\Sigma_g$ with the following properties.
\begin{itemize}
\item Both $X$ and $X'$ contain the basepoint.
\item $\Sigma_g = X \cup X'$ and $X \cap X' = \partial X = \partial X'$.
\item $s \subset \Image(\pi_1(X) \rightarrow \pi_1(\Sigma_g))$ and
$s' \subset \Image(\pi_1(X') \rightarrow \pi_1(\Sigma_g))$.
\end{itemize}
\item The sets $s$ and $s'$ are {\em strongly essentially separate} if we can choose $X$ and $X'$
as above such that both $X$ and $X'$ have exactly one boundary component (which necessarily
contains the basepoint).
\end{itemize}
\end{definition}

\noindent
See Figure \ref{figure:essentiallyseparate} for examples of curves that are essentially
separate and strongly essentially separate.

This brings us to the following key definition.  Recall that $H_L = \HH_1(\Sigma_g;\Z/L)$.

\Figure{figure:essentiallyseparate}{EssentiallySeparate}{a. $x$ and $y$ are strongly essentially separate
\CaptionSpace b. $\{x,y\}$ is strongly essentially separate from $\{x,z\}$.
\CaptionSpace c. $x$ and $y$ are essentially separate but not strongly essentially separate}

\begin{definition}
Define $\SSS \subset \HH_1(\KKK{g};\Q) \times \CCC{g}$ to equal $\SSSS{1} \cup \SSSS{2}$,
where the $\SSSS{i}$ are as follows.  To simplify our notation, we will denote $\pi_1(\Sigma_g)$ by $\pi$.
\begin{align*}
\SSSS{1} &= \Set[,]{$(\CL{x}{v}, \CL{y}{v'})$}{$v,v' \in H_L$, $x \in \KKK{g}$, $y \in [\pi,\pi]$, and $x$ and $y$ are essentially separate}\\
\SSSS{2} &= \Set[.]{$(\CL{x}{v}, \CL{y,z^L}{v'})$}{$v,v' \in H_L$, $x \in \KKK{g}$, $y,z \in \pi$,
$z$ can be realized by a simple\\ closed nonseparating curve, and
$\{z\}$ and $\{x,y\}$ are\\ strongly essentially separate}
\end{align*}
\end{definition}

\noindent
We now prove the following.

\begin{lemma}
\label{lemma:reidvanish}
For $(x,y) \in \SSS$, we have $\REID{x}{y}=0$.
\end{lemma}
\begin{proof}
We must deal with both $\SSSS{1}$ and $\SSSS{2}$.

\BeginSteps
\begin{step}
$\REID{\CL{x}{v}}{\CL{y}{v'}} = 0$ for $(\CL{x}{v}, \CL{y}{v'}) \in \SSSS{1}$.
\end{step}

Since $x$ and $y$ are essentially separate, they can be freely homotoped to disjoint curves.  This
implies that any two lifts of $x$ and $y$ to the cover of $\Sigma_g$ corresponding to $\KKK{g}$
can be homotoped so as to be disjoint. 
Examining the formula for $\REID{\cdot}{\cdot}$ in Lemma \ref{lemma:higherpairing}, this
immediately implies that $\REID{\CL{x}{v}}{\CL{y}{v'}} = 0$, as desired.

\begin{step}
$\REID{\CL{x}{v}}{\CL{y,z^L}{v'}}=0$ for $(\CL{x}{v}, \CL{y,z^L}{v'}) \in \SSSS{2}$.
\end{step}

Recall that if $w \in \pi_1(\Sigma_g)$, then $\CAB{w}$ denotes
the element of $H_L = \HH_1(\Sigma_g;\Z/L)$ associated to $w$.  By Lemma \ref{lemma:commutatoridentities}, we have
$$\CL{y,z^L}{v'} = \CL{z^L}{v'+\CAB{y}} - \CL{z^L}{v'}.$$
Since $x$ and $z^L$ are essentially separate, an argument similar to the argument in Step 1
shows that
$$\REID{\CL{x}{v}}{\CL{y,z^L}{v'}} = \REID{\CL{x}{v}}{\CL{z^L}{v'+\CAB{y}}} - \REID{\CL{x}{v}}{\CL{z^L}{v'}} = 0 - 0 = 0,$$
as desired.
\end{proof}

\begin{remark}
In Step 2 of the above, we only used the fact that $z$ is essentially separate from $x$.  The
remainder of the assumptions on elements of $\SSSS{2}$ will be later used in the proof of
Lemma \ref{lemma:killsker}.
\end{remark}

It follows that the set $\{\text{$x \otimes y$ $|$ $(x,y) \in \SSS$}\}$ is contained in
$\ker(\partial)$.  This is not everything, however.  Since $\Mod_{g}^1(L)$ acts trivially
on $\III{g}$, the kernel of $\partial$ also contains $x \otimes y - f(x) \otimes f(y)$ for
$x \in \HH_1(\KKK{g};\Z)$ and $y \in \CCC{g}$ and $f \in \Mod_g^1(L)$.
Define
$\SKER < \HH_1(\KKK{g};\Z) \otimes \CCC{g}$ be the span of the set
$$\{\text{$x \otimes y$ $|$ $(x,y) \in \SSS$}\} \cup \{\text{$x \otimes y - f(x) \otimes f(y)$ $|$ $x \in \HH_1(\KKK{g};\Z)$, $y \in \CCC{g}$, $f \in \Mod_g^1(L)$}\}$$
and let $\QQQ{g} = \HH_1(\KKK{g};\Z) \otimes \CCC{g} / \SKER$.  Since $\SKER \subset \Ker(\partial)$, the map
$\partial : \HH_1(\KKK{g};\Z) \otimes \CCC{g} \rightarrow \III{g}$ induces a map
$\psi : \QQQ{g} \rightarrow \III{g}$.  Following preliminary results in
\S \ref{section:generatorsandrelations}, we will prove the following lemma in \S \ref{section:linearalgebra}.

\begin{lemma}
\label{lemma:psiisomorphism}
For $g \geq 3$, the map $\psi$ is an isomorphism.
\end{lemma}

It follows that
$$\SKER = \Ker(\partial) = \Image(\HH_1(\KKK{g};\CCC{g,1}) \rightarrow \HH_1(\KKK{g};\CCC{g})).$$
To prove Lemma \ref{lemma:mainlemma}, which asserts that the map
$\HH_1(\KKK{g};\CCC{g,1}) \rightarrow \HH_1(\Mod_g^1(L);\CCC{g})$ is the zero map, it therefore
suffices to prove the following lemma, whose proof is in \S \ref{section:kills}.

\begin{lemma}
\label{lemma:killsker}
For $g \geq 4$, the image of $\SKER$ in $\HH_1(\Mod_g^1(L);\CCC{g})$ is zero.
\end{lemma}

This completes the outline of the proof of Lemma \ref{lemma:mainlemma} (and our outline of the
remainder of the paper).  However, we also owe the reader a proof of Lemma \ref{lemma:killcg},
which asserts that
$$(\CCC{g})_{\pi_1(\Sigma_g)} = (\CCC{g,1})_{\pi_1(\Sigma_g)} = 0$$
for $g \geq 3$.

\begin{proof}[{Proof of Lemma \ref{lemma:killcg}}]
We can extend the long exact sequence \eqref{eqn:kkklong} to the right to get an exact sequence
$$\HH_1(\KKK{g};\Z) \otimes \CCC{g} \stackrel{\partial}{\longrightarrow} \III{g} \longrightarrow (\CCC{g,1})_{\KKK{g}} \longrightarrow \CCC{g} \longrightarrow 0.$$
Lemma \ref{lemma:psiisomorphism} implies that $\partial$ is surjective, so we deduce that
$(\CCC{g,1})_{\KKK{g}} \cong \CCC{g}$.  It is thus enough to prove that $(\CCC{g})_{\pi_1(\Sigma_g)} = 0$.
Lemma \ref{lemma:decomposecover} implies that there is a $\pi_1(\Sigma_g)$-invariant decomposition
$$\HH_1(\KKK{g};\Q) \cong \CCC{g} \oplus \HH_1(\Sigma_g;\Q).$$
Also, Lemma \ref{lemma:supertransfer} implies that
$(\HH_1(\KKK{g};\Q))_{\pi_1(\Sigma_g)} \cong \HH_1(\Sigma_g;\Q)$.  We conclude that
$(\CCC{g})_{\pi_1(\Sigma_g)} = 0$.
\end{proof}

\section{Generators and relations for $\QQQ{g}$}
\label{section:generatorsandrelations}

This section contains preliminaries for the proof of Lemma \ref{lemma:psiisomorphism}.
We begin in \S \ref{section:intersectionpattern} by introducing the notion of the intersection pattern
of curves, which will play an important role in both this section and in \S \ref{section:kills}.  Next,
in \S \ref{section:x}, we introduce certain important elements $X(v,w_1,w_2)$ of $\QQQ{g}$.  We
calculate the image of $X(v,w_1,w_2)$ under $\psi$ in \S \ref{section:xpsiimage}.  We show
that the $X(v,w_1,w_2)$ span $\QQQ{g}$ in \S \ref{section:qgenerators1}.  Finally, in
\S \ref{section:qrelations}, we determine some relations between these elements.

Lemma \ref{lemma:psiisomorphism} is proven in \S \ref{section:linearalgebra} below.  The proof is
essentially a lengthy calculation with generators and relations.

Throughout this section, let $\eta : \HH_1(\KKK{g};\Z) \otimes \CCC{g} \rightarrow \QQQ{g}$
be the projection.

\subsection{Intersection patterns}
\label{section:intersectionpattern}

We will have to perform some detailed calculations with elements of $\pi_1(\Sigma_g)$.  These calculations
will depend on certain pictures of curves on the surface, and in this section we will establish
some vocabulary for this.  We begin with the following definition.

\begin{definition}
Let $S$ be a surface, possibly with boundary.  Assume that $S$ has a fixed basepoint.
An embedding $i : S \rightarrow \Sigma_g$ is a {\em simple embedding} if $i$ takes
the basepoint of $S$ to the basepoint of $\Sigma_g$ and 
all components of $\Sigma_g \setminus i(S)$ have one boundary component.
\end{definition}

\begin{remark}
We allow $S = \Sigma_g$ and $i=\text{id}$.
\end{remark}

\begin{remark}
The key property of simple embeddings is as follows.  Let $i : S \rightarrow \Sigma_g$ be a simple
embedding.  Then if $\gamma$ is a simple closed separating
curve on $S$, then $i(\gamma)$ is a simple closed separating curve on $\Sigma_g$.
\end{remark}

Next, we make the following definition.

\begin{definition}
Let $S$ be a surface, possibly with boundary, and let  $\{x_1',\ldots,x_k'\} \subset \pi_1(S)$.
We will say that a set $\{x_1,\ldots,x_k\} \subset \pi_1(\Sigma_g)$
of curves {\em has the same unoriented intersection pattern} as $\{x_1',\ldots,x_k'\}$ 
if there is a simple embedding $f:S \rightarrow \Sigma_g$ such that $f_{\ast}(x_i') = x_i^{\pm 1}$
for all $1 \leq i \leq k$.  If $f$ can be chosen such that $f(x_i') = x_i$ for all $1 \leq i \leq k$,
then we will say that that the curves {\em have the same oriented intersection pattern}.
\end{definition}

\begin{remark}
In what follows, the surface $S$ and the curves $\{x_1',\ldots,x_k'\}$ will often
be given by pictures.  To avoid cluttering the pictures, we will often depict boundary
components via gaps in their edges.  For instance, there are boundary components at
the top and bottom of Figure \ref{figure:xwelldefined}.a below.
\end{remark}

\Figure{figure:intersectionpattern}{IntersectionPattern}{The curves $\{x,y,z\}$ in a have the same
oriented intersection pattern as the curves $\{x,y',z\}$ in b.}

We will frequently assert without proof that a set of curves has a given (un)oriented
intersection pattern. In all these cases, the assertion will be a trivial consequence of
the ``change of coordinates'' principle from \cite[\S 1.3]{FarbMargalitPrimer}.  Rather
than give a formal description of this principle, we will illustrate it with a concrete example (there
are many more examples in \cite[\S 1.3]{FarbMargalitPrimer}).  Namely, we will prove
that the curves $\{x,y,z\}$ in Figure \ref{figure:intersectionpattern}.a have the same
oriented intersection pattern as the curves $\{x,y',z\}$ in Figure \ref{figure:intersectionpattern}.b (we
remark that $y' = y^{-1} x$).  

The proof is as follows.  The union of the curves in
Figure \ref{figure:intersectionpattern}.a (resp.\ Figure \ref{figure:intersectionpattern}.b)
forms an oriented graph $\Gamma_1$ (resp.\ $\Gamma_2$) embedded in $\Sigma_2$ with one vertex (the basepoint)
and three loops labeled with $\{x,y,z\}$ (resp.\ $\{x,y',z\}$).  
There is an isomorphism $f: \Gamma_1 \rightarrow \Gamma_2$ taking the edge labeled $x$ to the
edge labeled $x$, the edge labeled $y$ to the edge labeled $y'$, and the edge labeled $z$ to 
the edge labeled $z$.  The
embedding of $\Gamma_i$ in $\Sigma_2$ induces a cyclic order on the oriented edges entering
and leaving the single vertex, and the isomorphism $f$ respects these cyclic orderings.  This
implies that $f$ extends to a diffeomorphism $f' : N_1 \rightarrow N_2$, where $N_i$ is a regular
neighborhood of $\Gamma_i$.  An Euler characteristic computation shows that the components
of $\Sigma_2 \setminus N_i$ are diffeomorphic, and we thus obtain a basepoint-preserving diffeomorphism 
$f'' : \Sigma_2 \rightarrow \Sigma_2$ such that $(f'')_{\ast}(x) = x$ and $(f'')_{\ast}(y)=y'$
and $(f'')_{\ast}(z)=z$, as desired.

\subsection{The elements $X(v,w_1,w_2)$}
\label{section:x}

The purpose of this section is to introduce certain elements $X(v,w_1,w_2)$ in $\QQQ{g}$.  The
key will be the following lemma.  In it, recall that if $x \in \pi_1(\Sigma_g)$, then $\CAB{x}$ denotes
the element of $H_L = \HH_1(\Sigma_g;\Z/L)$ associated to $x$.

\Figure{figure:xwelldefined}{XWellDefined}{a. The configuration of curves such that
$\phi(\CLL{x} \otimes \CL{y,z}{v}) = X(v,w_1,w_2)$.  Also, the one-holed tori $B_1$ and $B_2$ will be needed in the
proof of Lemma \ref{lemma:xpsiimage}.  The central four-holed sphere in the picture will be called $A$ in that proof.
\CaptionSpace b. The curves $f(x_1)$ and $x_2$ leave at the top and come back at the bottom.
\CaptionSpace c. The product $f(x_1) x_2^{-1}$ is essentially disjoint from $[y_2,z_2]$.  The orientations
of the ``top'' and ``bottom'' piece depend on the manner in which $f(x_1)$ and $x_2$ leave and
come back to the basepoint}

\begin{lemma}
\label{lemma:xwelldefined}
Fix $w \in H_L = \HH_1(\Sigma_g;\Z/L)$.  For $1 \leq i \leq 2$, let $x_i,y_i,z_i \in \pi_1(\Sigma_g)$ be such that $\{x_i,y_i,z_i\}$
has the same oriented intersection pattern as the curves $\{x,y,z\}$ in Figure \ref{figure:xwelldefined}.a.  Assume that
$\CAB{y}_1 = \CAB{y}_2$ and $\CAB{z}_1 = \CAB{z}_2$.  Then
$\eta(\CLL{x_1} \otimes \CL{y_1,z_1}{w}) = \eta(\CLL{x_2} \otimes \CL{y_2,z_2}{w})$.
\end{lemma}
\begin{proof}
It is easy to see that there exists some $f \in \Mod_g^1(L)$ such that $f(y_1) = y_2$ and $f(z_1) = z_2$
(the proof of this is a slight variation on the proof of \cite[Proposition 6.7]{PutmanSecondHomologyLevel}, which
proves the analogous result for unbased curves).  We then have
$$\eta(\CLL{x_1} \otimes \CL{y_1,z_1}{w}) = \eta(\CLL{f(x_1)} \otimes \CL{f(y_1),f(z_1)}{w}) = \eta(\CLL{f(x_1)} \otimes \CL{y_2,z_2}{w}).$$
The curves $\{f(x_1),y_2,z_2\}$ have the same oriented intersection pattern as the curves $\{x,y,z\}$ in
Figure \ref{figure:xwelldefined}.a.  Moreover (see Figures \ref{figure:xwelldefined}.b--c), the curves
$f(x_1) x_2^{-1}$ and $[y_2,z_2]$ are essentially separate, so we conclude that
$$\eta(\CLL{f(x_1)} \otimes \CL{y_2,z_2}{w}) = \eta(\CLL{x_2} \otimes \CL{y_2,z_2}{w}),$$
as desired.
\end{proof}

We will need the following definition.  Let $i(\cdot,\cdot)$ be the $\Z/L$-valued algebraic
intersection pairing on $H_L$.

\begin{definition}
A $k$-element set $\{w_1,\ldots,w_k\} \subset H_L$ will be said to be
{\em isotropic} if $i(w_i,w_j)=0$ for all $1 \leq i,j \leq k$ and {\em unimodular} if $\Span{w_1,\ldots,w_k}$
is direct summand of $H_L$ that is isomorphic to a $k$-dimensional free $\Z/L$-submodule.
\end{definition}

\noindent
It is clear that if the curves $\{x,y,z\}$ have the same oriented intersection pattern
as the curves in Figure \ref{figure:xwelldefined}.a, then $\{\CAB{y},\CAB{z}\} \subset H_L$ is
isotropic and unimodular.  The converse is true as well.  This will require the following lemma.

\begin{lemma}
\label{lemma:realizehomology}
For some $n,m \geq 0$, let $\{w_1,\ldots,w_n,w_1',\ldots,w_m'\} \subset H_L$ be a unimodular set.  Assume that
$$i(w_i,w_j) = i(w_{i'}',w_{j'}') = 0 \quad \text{and} \quad i(w_i,w_{i'}) = 
\begin{cases}
1 & \text{if $i=i'$}\\
0 & \text{if $i \neq i'$}
\end{cases}$$
for $1 \leq i,j \leq n$ and $1 \leq i',j' \leq m$.  There then exists a set $\{\alpha_1,\ldots,\alpha_n,\alpha_1',\ldots,\alpha_m'\}$ of unbased
oriented simple closed curves on $\Sigma_g$ with the
following properties.
\begin{itemize}
\item The $\Z/L$-homology class of $\alpha_i$ is $w_i$ for $1 \leq i \leq n$ and the $\Z/L$-homology
class of $\alpha_i'$ is $w_i'$ for $1 \leq i \leq m$.
\item $\alpha_i$ and $\alpha_{i}'$ intersect once for $1 \leq i \leq \Min(n,m)$.  Otherwise,
the curves $\{\alpha_1,\ldots,\alpha_n,\alpha_1',\ldots,\alpha_m'\}$ are pairwise disjoint.
\end{itemize}
\end{lemma}
\begin{proof}
Identical to the proof of \cite[Lemma A.3]{PutmanCutPaste}.
\end{proof}

If $\{w_1,w_2\} \subset H_L$ is isotropic and unimodular, then Lemma \ref{lemma:realizehomology}
says that we can find unbased, disjoint simple closed curves $Y$ and $Z$ such that the $\Z/L$-homology
classes of $Y$ and $Z$ are $w_1$ and $w_2$, respectively.  Connecting $Y$ and $Z$ to the basepoint
in an appropriate way, we find $y,z \in \pi_1(\Sigma_g)$ such that $\CAB{y}=w_1$ and $\CAB{z}=w_2$ and
$\{y,z\}$ has the same oriented intersection pattern as the curves in Figure \ref{figure:xwelldefined}.a.  It
is then clear that we can find some $x \in \pi_1(\Sigma_g)$ such that $\{x,y,z\}$ has the same
oriented intersection pattern as the curves in Figure \ref{figure:xwelldefined}.a.

We now introduce notation for the elements of $\QQQ{g}$ we have been discussing.

\begin{definition}
For $v,w_1,w_2 \in H_L$ such that $\{w_1,w_2\}$ is isotropic and unimodular, define $X(v,w_1,w_2) = \eta(\CLL{x} \otimes \CL{y,z}{v}) \in \QQQ{g}$,
where $x,y,z \in \pi_1(\Sigma_g)$ have the same oriented intersection pattern as the curves in Figure \ref{figure:xwelldefined}.a and
$w_1 = \CAB{y}$ and $w_2 = \CAB{z}$.
\end{definition}

The paragraph before the definition shows that we can find appropriate $\{x,y,z\}$, and Lemma \ref{lemma:xwelldefined} implies
that $X(v,w_1,w_2)$ only depends on $\{v,w_1,w_2\}$.

\subsection{The image of $X(v,w_1,w_2)$ under $\psi$}
\label{section:xpsiimage}

Recall that $\psi$ is the natural map $\QQQ{g} \rightarrow \III{g}$ induced by the Reidemeister pairing $\REID{\cdot}{\cdot}$
from Lemma \ref{lemma:higherpairing}.  We now prove the following.

\begin{lemma}
\label{lemma:xpsiimage}
Consider $v,w_1,w_2 \in H_L$ such that $\{w_1,w_2\}$ is isotropic and unimodular.  Then
$\psi(X(v,w_1,w_2)) = \KRHO{v} - \KRHO{v+w_1} - \KRHO{v+w_2} + \KRHO{v+w_1+w_2}$.
\end{lemma}
\begin{proof}
Let $x,y,z \in \pi_1(\Sigma_g)$ be curves with the same oriented intersection pattern as the curves in Figure \ref{figure:xwelldefined}.a
such that $\CAB{y} = w_1$ and $\CAB{z} = w_2$.  We then have $X(v,w_1,w_2) = \eta(\CLL{x} \otimes \CL{y,z}{v})$, and the lemma
is equivalent to proving that
$$\REID{\CLL{x}}{\CL{y,z}{v}} = \KRHO{v} - \KRHO{v+\CAB{y}} - \KRHO{v+\CAB{z}} + \KRHO{v+\CAB{y}+\CAB{z}}.$$
Let $\rho : \CVR{g} \rightarrow \Sigma_g$ be the cover corresponding to $\KKK{g}$.  The group
of deck transformations is thus $H_L$.  Let $B_1 \subset \Sigma_g$ (resp. $B_2 \subset \Sigma_g)$)
be the one-holed torus on the left (resp.\ right) side of Figure \ref{figure:xwelldefined}.a.  Also, let
$A \subset \Sigma_g$ be the four-holed sphere ``between'' $B_1$ and $B_2$ in Figure \ref{figure:xwelldefined}.a.
We then have the following.
\begin{itemize}
\item $\rho^{-1}(A)$ is the disjoint union of $|H_L| = L^{2g}$ four-holed spheres each of which
projects homeomorphically onto $A$.
\item $\rho^{-1}(B_i)$ is the disjoint union of $|H_L|/L^2 = L^{2g-2}$ components.  If $\tilde{B}_i$
is one of those components, then $\tilde{B}_i$ is an $L^2$-holed torus and
$\rho|_{\tilde{B}_i} : \tilde{B}_i \rightarrow B_i$ is a cover with deck group $(\Z/L)^2$.
\end{itemize}
The homology class $\CL{y,z}{v}$ on $\CVR{g}$ can be realized by a simple closed curve $\gamma$ as
in Figure \ref{figure:calculate}.  If $\tilde{A}$
is the component of $\rho^{-1}(A)$ containing the basepoint, then this simple closed curve does the following.
\begin{itemize}
\item Beginning in the component $\KRHO{v} \cdot \tilde{A}$ of $\rho^{-1}(A)$, it goes through
a component of $\rho^{-1}(B_2)$ to arrive in $\KRHO{v+\CAB{y}} \cdot \tilde{A}$.
\item It then goes through a component of $\rho^{-1}(B_1)$ to arrive in
$\KRHO{v+\CAB{y}+\CAB{z}} \cdot \tilde{A}$.
\item It then goes through a component of $\rho^{-1}(B_2)$ to arrive in
$\KRHO{v+\CAB{z}} \cdot \tilde{A}$.
\item It finally goes through a component of $\rho^{-1}(B_1)$ to arrive back in $\KRHO{v} \cdot \tilde{A}$.
\end{itemize}
Let $\tilde{x}$ be the lift of $x$ contained in $\tilde{A}$.
As is evident from Figure \ref{figure:calculate}, the curve $\gamma$ intersects four different $H_L$-translates
of $\tilde{x}$, two with positive sign and two with negative sign.  Examining these intersections,
we see that
$$\REID{\CLL{x}}{\CL{y,z}{v}} = \KRHO{v} - \KRHO{v+\CAB{y}} - \KRHO{v+\CAB{z}} + \KRHO{v+\CAB{y}+\CAB{z}},$$
as desired.
\end{proof}

\Figure{figure:calculate}{Calculate}{The curve $\gamma$ in $\CVR{g}$ whose homology class
is $\CL{y,z}{v}$.  The dark portions are the lifts of $y^{\pm 1}$ and the dashed portions
are the lifts of $z^{\pm 1}$.  The numbers at the end of each segment indicate where the curve
goes next.  The 4-holed spheres are components of $\rho^{-1}(A)$ and the $L^2$-holed
tori (depicted here for $L=2$; the sides of the squares should be glued up in the indicated ways)
are components of $\rho^{-1}(B_1)$ and $\rho^{-1}(B_2)$.  The curve $\gamma$ intersects four
lifts $\tilde{x}_1,\ldots,\tilde{x}_4$ of $\rho^{-1}(x)$.  If $\tilde{x}$ is the lift of $x$
starting at the basepoint, then $\tilde{x}_1 = \KRHO{v} \cdot \tilde{x}$ and 
$\tilde{x}_2 = \KRHO{v + \CAB{y}} \cdot \tilde{x}$ and 
$\tilde{x}_3 = \KRHO{v + \CAB{y} + \CAB{z}} \cdot \tilde{x}$ and $\tilde{x}_4 = \KRHO{v+\CAB{z}} \cdot \tilde{x}$}

\subsection{$\QQQ{g}$ is spanned by the $X(v,w_1,w_2)$}
\label{section:qgenerators1}

In this section, we prove that the $X(v,w_1,w_2)$ span $\QQQ{g}$.  The proof will use the following lemma.

\begin{lemma}[{\cite[Lemma A.1]{PutmanCutPaste}}]
\label{lemma:commutatorgenerators}
Fix $g \geq 1$ and set $\pi = \pi_1(\Sigma_g)$.  The group $[\pi,\pi]$ is then generated by the set
$$\{\text{$\gamma$ $|$ $\gamma \in \pi$ can be realized by a simple closed separating curve}\}.$$
\end{lemma}

We now prove our lemma.

\begin{lemma}
\label{lemma:qgenerators1}
For $g \geq 1$, the vector space $\QQQ{g}$ is spanned by the set
$$\{\text{$X(v,w_1,w_2)$ $|$ $v,w_1,w_2 \in H_L$, $\{w_1,w_2\}$ is isotropic and unimodular}\}.$$
\end{lemma}
\begin{proof}
Define
$$\CCC{g}(\Z) = \Ker(\HH_1(\KKK{g};\Z) \longrightarrow \HH_1(\Sigma_g;\Z)).$$
Our proof will have three steps.  As notation, for $q,q' \in \QQQ{g}$,
write $q \equiv q'$ if $q$ and $q'$ are equal modulo $\eta(\CCC{g}(\Z) \otimes \CCC{g})$.

\BeginSteps
\begin{step}
Let $x,y,z \in \pi_1(\Sigma_g)$ be such that $z$ can be realized by a simple closed nonseparating curve
and $\{z\}$ is strongly essentially separate from $\{x,y\}$.  Also, let $v \in H_L$.  
Then $\eta(\CLL{x^L} \otimes \CL{y,z}{v}) \equiv 0$.
\end{step}

\Figure{figure:xgenerators}{XGenerators}{a. A standard basis for $\pi_1(\Sigma_g)$
\CaptionSpace b. The curve $[\alpha_1,\alpha_2]$.}

Since $\Mod_g^1(L)$ contains all inner automorphisms
of $\pi_1(\Sigma_g)$, we can use the $\Mod_g^1(L)$-invariance of $\QQQ{g}$ to deduce that
$$\eta(\CLL{x^L} \otimes \CL{y,z}{v+j \cdot \CAB{z}}) = \eta(\CLL{z^{-j} x^L z^{j}} \otimes \CL{y,z}{v}) \equiv \eta(\CLL{x^L} \otimes \CL{y,z}{v}).$$
for all $j \in \Z$.  Consequently,
$$\eta(\CLL{x^L} \otimes \CL{y,z}{v}) \equiv \frac{1}{L} \sum_{j=0}^{L-1} \eta(\CLL{x}^L \otimes \CL{y,z}{v+j \cdot \CAB{z}}) = \frac{1}{L} \eta(\CLL{x^L} \otimes \CL{y,z^L}{v}) = 0.$$
This last equality follows from the fact that $(\CLL{x^L}, \CL{y,z^L}{v}) \in \SSSS{2}$

\begin{step}
$\eta(\CCC{g}(\Z) \otimes \CCC{g}) = \QQQ{g}$.
\end{step}

Our goal is to show that $q \equiv 0$ for all $q \in \QQQ{g}$.  To do this, it is enough to show
that $\eta(\CLL{\alpha_1^L} \otimes c) \equiv 0$ for all $\alpha_1 \in \pi_1(\Sigma_g)$ that can be realized by
a simple closed nonseparating curve and all $c \in \CCC{g}$.
Extend $\alpha_1$ to a standard basis $\{\alpha_1,\ldots,\alpha_{2g}\}$ for $\pi_1(\Sigma_g)$ as
in Figure \ref{figure:xgenerators}.a.  The vector space $\CCC{g}$ is spanned by the set
$$S=\{\text{$\CL{\alpha_i,\alpha_j}{v}$ $|$ $1 \leq i < j \leq 2g$, $v \in H_L$}\}.$$
Consider $\CL{\alpha_i,\alpha_j}{v} \in S$.  If $i \geq 3$ or $(i,j)=(1,2)$, then $[\alpha_i,\alpha_j]$ is
essentially separate from $\alpha_1^L$ (see Figure \ref{figure:xgenerators}.b for $(i,j)=(1,2)$) 
and thus $\eta(\CLL{x^L} \otimes \CL{\alpha_i,\alpha_j}{v}) = 0$.  Otherwise, $i \in \{1,2\}$
and $j > 2$.  It follows that $\{\alpha_j\}$ is strongly essentially separate from
$\{\alpha_1,\alpha_i\}$, and thus Step 1 implies 
$\eta(\CLL{\alpha_1^L} \otimes \CL{\alpha_i,\alpha_j}{v}) \equiv 0$. 

\begin{step}
$\eta(\CCC{g}(\Z) \otimes \CCC{g})$ is generated by the set
$$\{\text{$X(v,w_1,w_2)$ $|$ $v,w_1,w_2 \in H_L$, $\{w_1,w_2\}$ is isotropic and unimodular}\}.$$
\end{step}

\noindent
Set
$$S = \{\text{$\gamma \in \pi_1(\Sigma_g)$ $|$ $\gamma \neq 1$, $\gamma$ can be realized by a simple closed separating curve}\}.$$
Lemma \ref{lemma:commutatorgenerators} implies that $\CCC{g}(\Z)$ is generated by $\{\text{$\CLL{\gamma}$ $|$ $\gamma \in S$}\}$.  Consider
$\gamma \in S$.  Let $X_1$ and $X_2$ be the two surfaces into which $\gamma$ cuts $\Sigma_g$.  Order them so that
$X_1$ lies to the right of $\gamma$ and $X_2$ to the left.  We can then find a basis $B_{\gamma}^1 \cup B_{\gamma}^2$
for $\pi_1(\Sigma_g)$ with the following properties.
\begin{itemize}
\item For $\delta \in B_{\gamma}^i$, we have $\delta \in \Image(\pi_1(X_i) \rightarrow \pi_1(\Sigma_g))$.
\item Consider $\delta_1 \in B_{\gamma}^1$ and $\delta_2 \in B_{\gamma}^2$.  The
curves $\{\gamma,\delta_1,\delta_2\}$ then have the same oriented intersection pattern as the curves in Figure \ref{figure:xwelldefined}.a.
\end{itemize}
It follows that $\CCC{g}$ is spanned by the set $U_{\gamma} \cup V_{\gamma}$, where
$$U_{\gamma} = \{\text{$\CL{\delta_1,\delta_2}{v}$ $|$ $\delta_i \in B_{\gamma}^i$, $v \in H_L$}\} \quad \text{and} \quad V_{\gamma} = \{\text{$\CL{\delta,\delta'}{v}$ $|$ there exists $i$ such that $\delta,\delta' \in B_{\gamma}^i$, $v \in H_L$}\}.$$
We then have that $\CCC{g}(\Z) \otimes \CCC{g}$ is spanned by the set $Z \cup W$, where
$$Z = \{\text{$\CLL{\gamma} \otimes c$ $|$ $\gamma \in S$, $c \in U_{\gamma}$}\} \quad \text{and} \quad W = \{\text{$\CLL{\gamma} \otimes c$ $|$ $\gamma \in S$, $c \in V_{\gamma}$}\}.$$
For $\CLL{\gamma} \otimes \CL{\delta_1,\delta_2}{v} \in Z$, we have $\eta(\CLL{\gamma} \otimes \CL{\delta_1,\delta_2}{v}) = X(v,\CAB{\delta}_1,\CAB{\delta}_2)$.  For
$\CLL{\gamma} \otimes \CL{\delta,\delta'}{v} \in W$, the curves $\gamma$ and $[\delta,\delta']$ are essentially separate, so
$\eta(\CLL{\gamma} \otimes \CL{\delta,\delta'}{v})=0$.  The desired result follows.
\end{proof}

\subsection{Relations between the $X(w,v_1,v_2)$}
\label{section:qrelations}

The goal of this section is to prove the following lemma, which gives relations
between the $X(v,w_1,w_2)$.

\begin{lemma}
\label{lemma:qrelations}
Let $\{w_1,w_2\} \subset \HH_1(\Sigma_{g};\Z/L)$ be an isotropic and unimodular set.  Then the
following hold for all $v \in \HH_1(\Sigma_g;\Z/L)$.
\begin{enumerate}
\item $X(v,w_1,w_2) = X(v,w_2,w_1)$
\item $X(v,-w_1,w_2) = -X(v-w_1,w_1,w_2)$
\item $\sum_{i=0}^{L-1} X(v+i \cdot w_1,w_1,w_2) = 0$ and $\sum_{i=0}^{L-1} X(v+i \cdot w_2,w_1,w_2) = 0$
\item Let $w_3 \in \HH_1(\Sigma_g;\Z/L)$ be such that $\{w_1,w_2,w_3\}$ is unimodular, such
that $i(w_2,w_3)=0$, and such that $-1 \leq i(w_1,w_3) \leq 1$.  Then
$X(v,w_1+w_3,w_2) = X(v,w_1,w_2) + X(v+w_1,w_3,w_2)$.
\end{enumerate}
\end{lemma}

\begin{remark}
It is instructive to check (using the formula in Lemma \ref{lemma:xpsiimage}) that each of these
relations is taken to $0$ by $\psi$.
\end{remark}

\begin{proof}[{Proof of Lemma \ref{lemma:qrelations}}]
Let $x,y,z \in \pi_1(\Sigma_g)$ be curves such that $\{x,y,z\}$ has the same oriented intersection pattern as the curves
in Figure \ref{figure:xwelldefined}.a and such that $\CAB{y} = w_1$ and $\CAB{z} = w_2$.  Hence
$X(v,w_1,w_2) = \eta(\CLL{x} \otimes \CL{y,z}{v})$.

For item 1, observe that if we flip
$y$ and $z$, then our curves no longer have the same oriented intersection pattern as the curves
in Figure \ref{figure:xwelldefined}.a (they do have the same unoriented intersection pattern).  To restore the correct orientations, we must
reverse $x$.  In other words,
$$X(v,w_2,w_1) = \eta(\CLL{x^{-1}} \otimes \CL{z,y}{v}) = \eta((-\CLL{x}) \otimes (-\CL{y,z}{v})) = \eta(\CLL{x} \otimes \CL{y,z}{v}) = X(v,w_1,w_2),$$
as desired.

For item 2, observe that the set of curves $\{x,y^{-1} x, z\}$ has the same oriented intersection pattern
as $\{x,y,z\}$ (see the example in \S \ref{section:intersectionpattern}).  
Also, since $\CAB{x}=0$, we have $\CAB{y^{-1} x} = -\CAB{y}=-w_1$.  Thus
we can apply Lemma \ref{lemma:commutatoridentities} to get that
\begin{align*}
X(v,-w_1,w_2) &= \eta(\CLL{x} \otimes \CL{y^{-1} x,z}{v}) = \eta(\CLL{x} \otimes \CL{y^{-1},z}{v}) + \eta(\CLL{x} \otimes \CL{x,z}{v-w_1})\\
& = -\eta(\CLL{x} \otimes \CL{y,z}{v-w_1}) + \eta(\CLL{x} \otimes \CL{x}{v-w_1}) - \eta(\CLL{x} \otimes \CL{x}{v-w_1+w_2}).
\end{align*}
Since $x$ is essentially separate from itself, the last two terms vanish and this equals
$$-\eta(\CLL{x} \otimes \CL{y,z}{v-w_1}) = -X(v-w_1,w_1,w_2),$$
as desired.

For item 3, item 1 implies that it is enough to prove that $\sum_{i=0}^{L-1} X(v+i \cdot w_2,w_1,w_2) = 0$.
Observe that
$(\CLL{x}, \CL{y,z^L}{v}) \in \SSSS{2}$, so by Lemma \ref{lemma:commutatoridentities} we have
$$0 = \eta(\CLL{x} \otimes \CL{y,z^L}{v}) = \sum_{i=0}^{L-1} \eta(\CLL{x} \otimes \CL{y,z}{v+i \cdot \CAB{z}}) = \sum_{i=0}^{L-1} X(v+i \cdot w_2,w_1,w_2),$$
as desired.

\Figure{figure:xrelations}{XRelations}{a. $i(y,y')=0$ and the sets of curves $\{x,y,z\}$ and
$\{x,y',z\}$ and $\{x,yy',z\}$ have the same oriented intersection pattern as the curves in
Figure \ref{figure:xwelldefined}.a
\CaptionSpace b. $i(y,y')=-1$ and the sets of curves $\{x,y,z\}$ and
$\{x,y',z\}$ and $\{x,yy',z\}$ and $\{x,y(y')^{-1},z\}$ have the same oriented intersection pattern
as the curves in Figure \ref{figure:xwelldefined}.a}

We conclude with item 4.  Here we will have to change our curves $x$ and $y$ and $z$.  
We will prove shortly that we can find $x,y,y',z \in \pi_1(\Sigma_g)$ with the following
properties.
\begin{itemize}
\item $\CAB{y} = w_1$ and $\CAB{y}' = w_3$ and $\CAB{z} = w_2$.
\item The sets of curves $\{x,y,z\}$ and $\{x,y',z\}$ and $\{x,yy',z\}$ each have the same
oriented intersection pattern as the curves in Figure \ref{figure:xwelldefined}.a.
\end{itemize}
See Figures \ref{figure:xrelations}.a,b.
Assuming this for the moment, the proof is completed by appealing to Lemma
\ref{lemma:commutatoridentities} to deduce that
\begin{align*}
X(v,w_1+w_3,w_2) &= \eta(\CLL{x} \otimes \CL{yy',z}{v}) = \eta(\CLL{x} \otimes \CL{y,z}{v}) + \eta(\CLL{x} \otimes \CL{y',z}{v+\CAB{y}})\\
&= X(v,w_1,w_2) + X(v+w_1,w_3,w_2).
\end{align*}
It remains to prove the above claim.  By Lemma \ref{lemma:realizehomology}, we can find unbased
oriented simple closed curves $Y$ and $Y'$ and $Z$ on $\Sigma_g$ with the following properties.
\begin{itemize}
\item The $\Z/L$-homology classes of $Y$, $Y'$, and $Z$ are $w_1$, $w_3$, and $w_2$, respectively.
\item $Z$ is disjoint from $Y$ and $Y'$.  Also, $Y$ and $Y'$ are disjoint if $i(w_1,w_3)=0$ and intersect
once if $i(w_1,w_3)=\pm 1$.
\end{itemize}
We can then connect $Y$, $Y'$, and $Z$ to the basepoint to obtain curves $y,y',z \in \pi_1(\Sigma_g)$
such that $\{y,y',z\}$ has the same oriented intersection pattern as the curves in Figure
\ref{figure:xrelations}.a (if $i(w_1,w_3)=0$) or \ref{figure:xrelations}.b (if $i(w_1,w_3)=-1$) or
\ref{figure:xrelations}.b with the orientation of the curve $y'$ reversed (if $i(w_1,w_3)=1$).  It
is then clear that we can find $x \in \pi_1(\Sigma_g)$ such that $\{x,y,y',z\}$ has the indicated
oriented intersection pattern, as desired.
\end{proof}

\section{The map $\psi : \QQQ{g} \rightarrow \III{g}$ is an isomorphism}
\label{section:linearalgebra}

The purpose of this section is to prove that the map $\psi : \QQQ{g} \rightarrow \III{g}$
is an isomorphism.  The actual proof is in \S \ref{section:psiisomorphism}.  This
is proceeded by \S \ref{section:qgenerators2}, which constructs a generating set $V$ for $\QQQ{g}$
that is slightly smaller than the generating set determined in \S \ref{section:qgenerators1}.

Throughout this section, we will freely use the main results of \S \ref{section:generatorsandrelations} (i.e.\ Lemmas \ref{lemma:xpsiimage} and
\ref{lemma:qgenerators1} and \ref{lemma:qrelations}).

\subsection{A smaller generating set}
\label{section:qgenerators2}

Fix a symplectic basis $B=\{a_1,b_1,\ldots,a_{g},b_{g}\}$ for $H_L$.
Define $V = V_1 \cup V_2 \subset \QQQ{g}$, where the $V_i$ are as follows.
\begin{align*}
\begin{split}
V_1 = \{\text{$X(v,s_1,s_2)$ $|$ }&\text{$v \in H_L$, $s_1,s_2 \in B$ distinct, $i(s_1,s_2)=0$}\},\\
\end{split}\\
\begin{split}
V_2 = \{\text{$X(v,s_1,s_1 + e s_2)$ $|$ }&\text{$v \in H_L)$, $s_1,s_2 \in B$ distinct, $e \in \{-1,1\}$, $i(s_1,s_2)=0$}\},\\
\end{split}
\end{align*}
The goal of this section is to prove the following.

\begin{lemma}
\label{lemma:qgenerators2}
For $g \geq 4$, the vector space $\QQQ{g}$ is spanned by $V$.
\end{lemma}

We begin with the following relations in $\QQQ{g}$.

\begin{lemma}
\label{lemma:newrelation1}
Fix $g \geq 1$, and let $\{s_1,s_2,s_3\} \subset H_L$ be a unimodular set
such that $i(s_1,s_3)=i(s_2,s_3)=0$ and $-1 \leq i(s_1,s_2) \leq 1$.
Then for all $v \in H_L$ we have the following two relations.
\begin{align*}
X(v,s_2,s_3) &= X(v+s_1,s_2,s_3) + X(v,s_1,s_3) - X(v+s_2,s_1,s_3), \\
X(v-s_1,s_2,s_3) &= X(v,s_2,s_3) + X(v-s_1,s_1,s_3) - X(v-s_1+s_2,s_1,s_3).
\end{align*}
\end{lemma}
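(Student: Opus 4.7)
The plan is to derive both identities directly from relation~(4) in the definition of $\AAA{g}$, applied twice with the roles of $s_1$ and $s_2$ interchanged, and then combined.

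First, I will apply relation~(4) with $(w_1,w_2,w_3)=(s_1,s_3,s_2)$. The required hypotheses read: $\{s_1,s_3,s_2\}$ is unimodular (given), $i(s_3,s_2)=0$ (given, by antisymmetry of $i$), and $-1\le i(s_1,s_2)\le 1$ (given). Relation~(4) then yields
\[
X(v,\,s_1+s_2,\,s_3)=X(v,s_1,s_3)+X(v+s_1,\,s_2,\,s_3).
\]
Next, I will apply relation~(4) a second time, this time with $(w_1,w_2,w_3)=(s_2,s_3,s_1)$. The hypotheses are verified in exactly the same way, using antisymmetry of $i$ to get $-1\le i(s_2,s_1)\le 1$. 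This produces
\[
X(v,\,s_2+s_1,\,s_3)=X(v,s_2,s_3)+X(v+s_2,\,s_1,\,s_3).
\]

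Since $s_1+s_2=s_2+s_1$, the two left-hand sides are identical. Subtracting and rearranging immediately gives
\[
X(v,s_2,s_3)=X(v+s_1,s_2,s_3)+X(v,s_1,s_3)-X(v+s_2,s_1,s_3),
\]
which is the first relation of the lemma. The second relation follows at once by substituting $v\mapsto v-s_1$ in the first.

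There is essentially no obstacle here: the entire argument amounts to checking that relation~(4) may legitimately be invoked in both orderings, and that check is immediate from the symmetric role played by $s_1$ and $s_2$ in the hypothesis (both are orthogonal to $s_3$, both interact with the other via intersection number in $\{-1,0,1\}$, and both sit inside the given unimodular triple). If anything counts as the delicate point, it is simply remembering that relation~(4) is not symmetric in $w_1$ and $w_3$, so the two applications produce genuinely different right-hand sides even though their left-hand sides match.
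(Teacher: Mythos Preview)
Your proof is correct and is essentially identical to the paper's own argument: the paper also derives the first relation by expanding $X(v,s_1+s_2,s_3)$ in two ways via relation~(4) and equating the results, then obtains the second relation by the substitution $v\mapsto v-s_1$. Your version simply makes the hypothesis-checking for each application of relation~(4) explicit.
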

\begin{proof}
The first relation follows from the fact that
$$X(v,s_1+s_2,s_3) = X(v,s_1,s_3) + X(v+s_1,s_2,s_3) \quad \text{and} \quad 
X(v,s_1+s_2,s_3) = X(v,s_2,s_3) + X(v+s_2,s_1,s_3).$$
The second follows from the first via the substitution $v \mapsto v-s_1$.
\end{proof}

We next show that $\Span{V}$ contains several other classes of elements.  Define
\begin{align*}
\begin{split}
V_3 = \{\text{$X(v,s_1 + e s_2,s_3 + e' s_4)$ $|$ }&\text{$v \in H_L$, $s_1,s_2,s_3,s_4 \in B$ distinct, 
$e,e' \in \{-1,1\}$,}\\
&\text{$i(s_1,s_3)=\pm 1$, $i(s_1 + e s_2,s_3 + e' s_4)=0$}\},\\
\end{split}\\
\begin{split}
V_4 = \{\text{$X(v,s_1+e s_2, s_1 + e s_2 +e' s_3)$ $|$ }&\text{$v \in H_L$, $s_1,s_2,s_3 \in B$ distinct, $e,e' \in \{-1,1\}$,}\\
&\text{$i(s_1,s_2) = \pm 1$, $i(s_1,s_3)=i(s_2,s_3)=0$}\}.\\
\end{split}
\end{align*}
We then have the following.

\begin{lemma}
\label{lemma:eliminatev34}
For $g \geq 1$, we have $V_3,V_4 \subset \Span{V}$.
\end{lemma}
\begin{proof}
For $x,y \in \QQQ{g}$, write $x \equiv y$ if $x$ and $y$ are equal modulo $\Span{V_1,V_2}$.
First consider $X(v,w_1,w_2) \in V_3$.  Our goal is to show that $X(v,w_1,w_2) \equiv 0$.  For
concreteness, we will do the case $X(v,w_1,w_2) = X(v,a_i + b_j, b_i + a_j)$ for
some $1 \leq i,j \leq g$ with $i \neq j$; the other cases are similar.

Observe first that Lemma \ref{lemma:newrelation1} implies that
$$X(v,b_i+a_j,a_i+b_j) = X(v+a_i,b_i+a_j,a_i+b_j) + X(v,a_i,a_i+b_j) - X(v+b_i+a_j,a_i,a_i+b_j).$$
Since $X(v,a_i,a_i+b_j), X(v+b_i+a_j,a_i,a_i+b_j) \in V_2$, we deduce
that $X(v,a_i + b_j, b_i + a_j) \equiv X(v+a_i,a_i+b_j, b_i+a_j)$.  In a similar manner, we
have $X(v+a_i,a_i+b_j,b_i+a_j) \equiv X(v+(a_i+b_j),a_i+b_j,b_i+a_j)$.  Iterating
this, we obtain $X(v,a_i+b_j,b_i+a_j) \equiv X(v + k(a_i+b_j),a_i+b_j,b_i+a_j)$ for all
$k \in \Z$.  But this implies that
$$X(v,a_i+b_j,b_i+a_j) \equiv \frac{1}{L} \sum_{k=0}^{L-1} X(v + k(a_i+b_j), a_i+b_j, b_i+a_j) = 0,$$
as desired.

Now consider $X(v',w_1',w_2') \in V_4$.  We will show that $X(v',w_1',w_2')$ can be written as a
a linear combination of elements of $V_1 \cup V_2 \cup V_3$.  For concreteness, we will do the case
$X(v',w_1',w_2') = X(v,a_i+b_i,a_i+b_i+a_j)$ for some $1 \leq i,j \leq g$ with $i \neq j$; the
other cases are similar.  In this case, we have
\begin{align*}
X(v,a_i+b_i,a_i+b_i+a_j) = &X(v,(a_i+b_j)+(b_i-b_j),a_i+b_i+a_j)\\
= &X(v,a_i+b_j,a_i+b_i+a_j) + X(v+a_i+b_j,b_i-b_j,a_i+b_i+a_j)\\
= &X(v,a_i+b_j,a_i) + X(v+a_i,a_i+b_j,b_i+a_j) \\
&+ X(v+a_i+b_j,b_i-b_j,b_i) + X(v+a_i+b_j+b_i,b_i-b_j,a_i+a_j),
\end{align*}
as desired.
\end{proof}

Next, say that $v \in H_L$ is a {\em simple element of length at most $k$} if
it can be written as
$$v = \sum_{i=1}^g (c_i a_i + d_i b_i)$$
for some $c_i,d_i \in \{-1,0,1\}$ such that at most $k$ of the $c_i$ and $d_i$ are nonzero.  Define
\begin{align*}
V' = \{\text{$X(v,w_1,w_2)$ $|$ }&\text{$v,w_1,w_1 \in H_L$, $\{w_1,w_2\}$ is
isotropic and unimodular, and}\\
&\text{$w_1$ is a simple element of length at most $3$}
\end{align*}
We then have the following

\begin{lemma}
\label{lemma:eliminatevprime}
For $g \geq 1$, we have $V' \subset \Span{V}$.
\end{lemma}
\begin{proof}
An easy case-by-case check shows that one can use the ``bilinearity relations'' (relations 1, 2, and
4 in Lemma \ref{lemma:qrelations}) to express every element of $V'$ as a linear combination
of elements of $V \cup V_3 \cup V_4$, and thus via Lemma \ref{lemma:eliminatev34} as a linear combination
of elements of $V$.
\end{proof}

We finally prove Lemma \ref{lemma:qgenerators2}.

\Figure{figure:modgenerators}{ModGenerators}{Generators for $\Mod_g^1$}

\begin{proof}[{Proof of Lemma \ref{lemma:qgenerators2}}]
The mapping class group $\Mod_{g}^1$ acts on both $H_L$ and $\QQQ{g}$.  The action on $H_L$
is transitive on pairs $\{w_1,w_2\}$ of vectors that are isotropic and unimodular.  It follows that the
$\Mod_g^1$-orbit of the set $V$ contains every element $X(v,w_1,w_2)$.  Lemma
\ref{lemma:qgenerators1} says that these generate $\QQQ{g}$, so we conclude that it is enough
to show that $\Span{V}$ is invariant under $\Mod_g^1$.  Let $\{\delta_1,\ldots,\delta_{2g+1}\}$
be the simple closed curves depicted in Figure \ref{figure:modgenerators}.  The set
$$S = \{\text{$T_{\delta_i}$ $|$ $1 \leq i \leq 2g+1$}\}$$
then generates $\Mod_g^1$ (see \cite{FarbMargalitPrimer}).  It is enough to prove that
$s \cdot X(v,w_1,w_2) \in \Span{V}$ for $s \in S^{\pm 1}$ and $X(v,w_1,w_2) \in V$.  However,
it is easy to see that $s(w_1)$ is a simple element of length at most $3$, so
$s \cdot X(v,w_1,w_2) \in V'$ and the desired result follows from Lemma \ref{lemma:eliminatevprime}.
\end{proof}

\subsection{$\psi$ is an isomorphism}
\label{section:psiisomorphism}

In this section, we prove Lemma \ref{lemma:psiisomorphism}, which we recall asserts that 
for $g \geq 3$, the map $\psi : \QQQ{g} \rightarrow \III{g}$ is an isomorphism.
Our proof is lengthy, but the basic idea is as follows.
\begin{itemize}
\item Using the relations in $\QQQ{g}$, we will show that the set
$\Span{V}$ is generated by a set containing $\Dim{\Q[H_L]}-1$ elements.
\item By carefully examining the image of $\psi$, we will show that
$\Q[H_L] = \psi(\Span{V}) + \Span{\KRHO{0}}$.
\end{itemize}
A simple dimension count will then establish the lemma.

Some parts of our proof will be by induction on $g$.  We will thus need notation for $\Q[H_L]$ which
takes $g$ into account, so define $\BBB{g} = \Q[\HH_1(\Sigma_g;\Z/L)]$.

To make the calculations a bit more palatable, we will break this down into several steps.
We first determine what $\psi$ does to $V_1$.

\begin{lemma}
\label{lemma:v1injective}
Fix $g \geq 1$.
Set $\BBB{g}^1 = \Span{\{\text{$\KRHO{c a_i + d b_i}$ $|$ $c,d \in \Z/L$, $1 \leq i \leq g$}\}}$.
Then the map $\psi|_{\Span{V_1}}$ is injective and
$\BBB{g} = \psi(\Span{V_1}) \oplus \BBB{g}^1$.
\end{lemma}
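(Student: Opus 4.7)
The plan is to compute the image $\psi(\Span{V_1})$ explicitly via a tensor product decomposition of $\BBB{g}$, identify it as a complement to $\BBB{g}^1$, and then use a dimension count to conclude injectivity. Set $U_i = \Span{a_i, b_i} \subseteq H_L$, so that $H_L = \bigoplus_{i=1}^g U_i$ and $\BBB{g} \cong \bigotimes_{i=1}^g \Q[U_i]$. Refining each tensor factor as $\Q[U_i] = \Q\theta_i \oplus I(U_i)$, with $\theta_i = \sum_{v \in U_i} \KRHO{v}$ and $I(U_i)$ the augmentation ideal (of dimension $L^2 - 1$), yields $\BBB{g} = \bigoplus_{S \subseteq \{1, \ldots, g\}} T_S$, where $T_S = \bigotimes_{i \in S} I(U_i) \otimes \bigotimes_{i \notin S} \Q\theta_i$ has dimension $(L^2 - 1)^{|S|}$.

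For a generator $X(v, s_1, s_2) \in V_1$ with $s_1 \in U_\alpha \cap B$, $s_2 \in U_\beta \cap B$, $\alpha \neq \beta$, and $v = \sum_i v_i$, direct computation gives
\[
\psi(X(v, s_1, s_2)) = (\KRHO{v_\alpha} - \KRHO{v_\alpha + s_1}) \otimes (\KRHO{v_\beta} - \KRHO{v_\beta + s_2}) \otimes \bigotimes_{i \neq \alpha, \beta} \KRHO{v_i},
\]
which lies in $I(U_\alpha) \otimes I(U_\beta) \otimes \bigotimes_{i \neq \alpha, \beta} \Q[U_i] \subseteq \bigoplus_{S \supseteq \{\alpha, \beta\}} T_S$. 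For the reverse containment, a telescoping calculation shows that $\Span{\KRHO{v} - \KRHO{v + a_\alpha} : v \in U_\alpha} + \Span{\KRHO{v} - \KRHO{v + b_\alpha} : v \in U_\alpha} = I(U_\alpha)$; combining this with the analogous statement for $U_\beta$ and letting the remaining components of $v$ vary freely, we obtain $\psi(\Span{V_1}) = \bigoplus_{|S| \geq 2} T_S$.

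To conclude, $\dim \BBB{g}^1 = gL^2 - g + 1$ and $\dim \bigoplus_{|S| \geq 2} T_S = L^{2g} - gL^2 + g - 1$ sum to $L^{2g} = \dim \BBB{g}$, so the direct sum decomposition $\BBB{g} = \BBB{g}^1 \oplus \psi(\Span{V_1})$ follows once we verify $\BBB{g}^1 + \psi(\Span{V_1}) = \BBB{g}$. I would prove this by induction on the size of $\{i : v_i \neq 0\}$: if this set has size $\leq 1$ then $\KRHO{v} \in \BBB{g}^1$; otherwise, choosing $\alpha, \beta$ in the support, the element $\KRHO{v} - \KRHO{v - v_\alpha} - \KRHO{v - v_\beta} + \KRHO{v - v_\alpha - v_\beta}$ factors as $\bigotimes_{i \neq \alpha, \beta} \KRHO{v_i} \otimes (\KRHO{v_\alpha} - \KRHO{0}) \otimes (\KRHO{v_\beta} - \KRHO{0})$ and lies in $\psi(\Span{V_1})$, while the three subtracted terms have strictly smaller support. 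Finally, for injectivity of $\psi|_{\Span{V_1}}$, the surjection onto $\bigoplus_{|S| \geq 2} T_S$ gives $\dim \Span{V_1} \geq L^{2g} - gL^2 + g - 1$; the reverse bound would be established by using relations 1 and 3 in the definition of $\AAA{g}$ to cut each unordered pair's contribution to $\Span{V_1}$ down to dimension $L^{2g-2}(L-1)^2$, and then extracting the commutation identity $X(v, a_\alpha, s) + X(v + a_\alpha, b_\alpha, s) = X(v, b_\alpha, s) + X(v + b_\alpha, a_\alpha, s)$ (valid for $s \in B \cap U_\gamma$, $\gamma \neq \alpha$) from two applications of relation 4 to $X(v, a_\alpha + b_\alpha, s)$ to account for the remaining redundancy. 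This final dimension bound on $\Span{V_1}$ is the main obstacle; the preceding steps are relatively direct from the tensor product viewpoint.
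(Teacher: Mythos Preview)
Your tensor-product computation of the image $\psi(\Span{V_1}) = \bigoplus_{|S|\ge 2} T_S$ is correct and elegant, and so is the spanning argument showing $\BBB{g} = \BBB{g}^1 + \psi(\Span{V_1})$; together with the dimension identity this genuinely establishes the direct-sum decomposition $\BBB{g} = \psi(\Span{V_1}) \oplus \BBB{g}^1$. This is a cleaner route to that half of the lemma than the paper's, which obtains the decomposition only as a byproduct of its induction.

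The gap is in the injectivity of $\psi|_{\Span{V_1}}$, and it is more than a detail. Your sketch proposes to bound $\dim\Span{V_1}$ from above using relations 1 and 3 (giving at most $L^{2g-2}(L-1)^2$ per unordered pair $\{s_1,s_2\}$) plus the single ``commutation identity''
\[
X(v,a_\alpha,s)+X(v+a_\alpha,b_\alpha,s)=X(v,b_\alpha,s)+X(v+b_\alpha,a_\alpha,s).
\]
But this identity only relates generators within a fixed block-pair $(\alpha,\gamma)$: it swaps $a_\alpha$ for $b_\alpha$ while keeping $s\in U_\gamma$. The overcount you need to kill is not of that shape. After relations 1 and 3, the $2g(g-1)$ pairs contribute a total of $2g(g-1)L^{2g-2}(L-1)^2$, whereas even the coarser target $\sum_{\alpha<\beta}\dim W_{\alpha\beta}=\binom{g}{2}(L^2-1)^2 L^{2g-4}$ already requires relations linking $(a_\alpha,s),(b_\alpha,s)$ to one another, and the further drop from $\sum_{\alpha<\beta}\dim W_{\alpha\beta}$ down to $\dim\bigoplus_{|S|\ge 2}T_S$ requires relations linking generators attached to \emph{different} block-pairs $(\alpha,\beta)$ and $(\alpha',\beta)$. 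Those cross-pair relations do exist in $\AAA{g}$ --- from two expansions of $X(v,s_1+s_2,s_3)$ via relation 4 one gets
\[
X(v,s_2,s_3)-X(v+s_1,s_2,s_3)=X(v,s_1,s_3)-X(v+s_2,s_1,s_3)
\]
for suitable $s_1,s_2,s_3\in B$ in three distinct $U$'s --- but you have not invoked them, and turning the full collection of within-pair and cross-pair relations into an exact dimension count is precisely where the work lies.

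The paper handles this by induction on $g$: it splits $V_1$ according to whether the generators involve $a_g$ or $b_g$, uses the inductive hypothesis on the part avoiding $\{a_g,b_g\}$, and then adjoins first the $a_g$-generators and then the $b_g$-generators. At each adjunction step it uses exactly the cross-pair relation above to shrink the $v$-parameter down to a single block, producing an explicit spanning set whose cardinality matches the required complementary dimension. Your tensor framework could likely absorb this --- the induction amounts to peeling off one tensor factor $\Q[U_g]$ at a time --- but as written, the ``main obstacle'' you flag is a genuine one, and the commutation identity alone does not resolve it.
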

\begin{proof}
The proof will by induction on $g$.  For the base case $g=1$, the set $V_1$ is empty
and the assertion is trivial.  Assume now that $g \geq 2$ and that the lemma is true
for all smaller $g$.  Define
\begin{align*}
V_1^I &= \{\text{$X(v,s_1,s_2) \in V_1$ $|$ $s_1,s_2 \notin \{a_{g},b_{g}\}$}\},\\
V_1^A &= \{\text{$X(v,a_g,s)$ $|$ $X(v,a_g,s) \in V_1$}\},\\
V_1^B &= \{\text{$X(v,b_g,s)$ $|$ $X(v,b_g,s) \in V_1$}\},
\end{align*}
so $V = V_1^I \cup V_1^A \cup V_1^B$.  The proof will consist of three steps.

\BeginSteps
\begin{step}
Set $\BBB{g}^2 = \Span{\{\text{$\KRHO{c a_i + d b_i + e a_{g} + f b_{g}}$ $|$ $c,d,e,f \in \Z/L$, $1 \leq i \leq g-1$}\}}$.
Then the map $\psi|_{\Span{V_1^I}}$ is injective and
$\BBB{g} = \psi(\Span{V_1^I}) \oplus \BBB{g}^2$.
\end{step}

\noindent
For $e,f \in \Z/L$, define
\begin{align*}
\BBB{g}(e,f) &= \Span{\{\text{$\KRHO{v + e a_{g} + f b_{g}}$ $|$ $v \in \Span{a_1,b_1,\ldots,a_{g-1},b_{g-1}}$}\}},\\
\BBB{g}^2(e,f) &= \Span{\{\text{$\KRHO{c a_i + d b_i + e a_{g} + f b_{g}}$ $|$ $c,d \in \Z/L$, $1 \leq i \leq g-1$}\}},\\
V_1^{I}(e,f) &= \{\text{$X(v + e a_{g} + f b_{g},s_1,s_2) \in V_1$ $|$ $s_1,s_2 \notin \{a_g,b_g\}$, $v \in \Span{a_1,b_1,\ldots,a_{g-1},b_{g-1}}$}\}.
\end{align*}
Observe that
$$\BBB{g} = \bigoplus_{e,f \in \Z/L} \BBB{g}(e,f) \quad \text{and} \quad 
\BBB{g}^2 = \bigoplus_{e,f \in \Z/L} \BBB{g}^2(e,f) \quad \text{and} \quad
V_1^I = \bigsqcup_{e,f \in \Z/L} V_1^{I}(e,f).$$
Moreover, for all $e,f \in \Z/L$ we have $\psi(V_1^{I}(e,f)) \subset \BBB{g}(e,f)$.  We conclude that it
is enough to prove that $\psi|_{\psi(\Span{V_1^{I}(e,f)})}$ is injective and
$\BBB{g}(e,f) = \psi(\Span{V_1^{I}(e,f)}) \oplus \BBB{g}^2(e,f)$
for all $e,f \in \Z/L$.

Consider $e,f \in \Z/L$.  The map $\KRHO{v} \mapsto \KRHO{v + e a_{g} + f b_{g}}$ induces an isomorphism
$\rho:\BBB{g-1} \rightarrow \BBB{g}(e,f)$ that restricts to an isomorphism $\BBB{g-1}^1 \cong \BBB{g}^2(e,f)$.
Let $V_{1,g-1} \subset \QQQ{g-1}$ be the
$(g-1)$-dimensional analogue of $V_1$ and $\psi_{g-1} : \QQQ{g-1} \rightarrow \BBB{g-1}$ be the $(g-1)$-dimensional
analogue of $\psi$.  The map $X(v,w_1,w_2) \mapsto X(v + e a_{g} + f b_{g},w_1,w_2)$
induces a homomorphism $\QQQ{g-1} \rightarrow \QQQ{g}$ that restricts to a
surjection $\rho' : \Span{V_{1,g-1}} \rightarrow \Span{V_1^{I}(e,f)}$.  Observe that the diagram
$$\begin{CD}
\Span{V_{1,g-1}} @>{\psi_{g-1}}>> \BBB{g-1}\\
@V{\rho'}VV                        @V{\rho}VV\\
\Span{V_1^{I}(e,f)} @>{\psi}>> \BBB{g}(e,f)
\end{CD}$$
commutes.  By the induction hypothesis, $\psi_{g-1}|_{\Span{V_{1,g-1}}}$ is injective and
$\BBB{g-1} = \psi_{g-1}(\Span{V_{1,g-1}}) \oplus \BBB{g-1}^1$.
We conclude that $\rho'$ is injective and thus an isomorphism.  Moreover, $\psi|_{\Span{V_1^{I}(e,f)}}$ is injective and
$\BBB{g}(e,f) = \psi(\Span{V_1^{I}(e,f)}) \oplus \BBB{g}^2(e,f)$,
as desired.

\begin{step}
Set
$\BBB{g}^3 = \Span{\{\text{$\KRHO{c a_i + d b_i + f b_{g}}$, $\KRHO{e a_{g} + f b_{g}}$ $|$ $c,d,e,f \in \Z/L$, $1 \leq i \leq g-1$}\}}$.
Then the map $\psi|_{\Span{V_1^I, V_1^A}}$ is injective and
$\BBB{g} = \psi(\Span{V_1^I, V_1^A}) \oplus \BBB{g}^3$.
\end{step}

\noindent
Define
$$V_1^{A,1} = \{\text{$X(c a_i + d b_i + e a_g + f b_g,a_g,s)$ $|$ $1 \leq i \leq g-1$, $c,d,e,f \in \Z/L$, $s \in \{a_i,b_i\}$}\} \subset V_1^A.$$
We claim that $\Span{V_1^I,V_1^A} = \Span{V_1^I,V_1^{A,1}}$.  Indeed, consider $X(w,a_g,s) \in V_1^A$.  By
Lemma \ref{lemma:newrelation1}, for any $s' \in \{a_1,b_1,\ldots,a_{g-1},b_{g-1}\}$ with $s' \neq s$ and $i(s,s')=0$, we
have
$$X(w-s',a_g,s) = X(w,a_g,s) + X(w-s',s',s) - X(w-s'+a_g,s',s).$$
Hence modulo $\Span{V_1^I}$, we have $X(w,a_g,s)$ equal to $X(w-s',a_g,s)$.  Iterating this, modulo $\Span{V_1^I}$
we have $X(w,a_g,s)$ equal to an element of $V_1^{A,1}$, as desired.

For $x \in \Z/L$, we will denote by $|x|$ the
unique integer representing $x$ with $0 \leq |x| < L-1$.
Noting that $\psi(V_1^{A,1}) \subset \BBB{g}^2$, we claim that $\BBB{g}^2 = \psi(\Span{V_1^{A,1}}) + \BBB{g}^3$.
Indeed, assume that there is some $\KRHO{c a_i + d b_i + e a_g + f b_g} \in \BBB{g}^2$ that is
not in $\psi(\Span{V_1^{A,1}}) + \BBB{g}^3$.
Choose $\KRHO{c a_i + d b_i + e a_g + f b_g}$ such that
$|c|+|d|+|e|$ is minimal among elements with this
property.  By assumption we must have $|e|$ and one of $|c|$ or $|d|$ (say $|c|$) nonzero.  Setting
$w' = c a_i + d b_i + e a_g + f b_g$, we then
have $X(w'-a_i-a_g,a_g,a_i) \in V_1^{A,1}$ and
$$\KRHO{w'} - \psi(X(w'-a_i-a_g,a_g,a_i)) = \KRHO{w'-a_i} + \KRHO{w'-a_g} - \KRHO{w'-a_i-a_g}.$$
We conclude that one of $\KRHO{w'-a_i}$, $\KRHO{w'-a_g}$, or $\KRHO{w'-a_i-a_g}$ is not in
$\psi(\Span{V_1^{A,1}}) + \BBB{g}^3$, contradicting the minimality of $|c|+|d|+|e|$.

Now define
\begin{align*}
V_1^{A,2} = &\{\text{$X(c a_i + d b_i + e a_g + f b_g,a_g,a_i)$ $|$ $1 \leq i \leq g-1$, $c,d,e,f \in \Z/L$}\} \\
&\quad \cup \{\text{$X(d b_i + e a_g + f b_g,a_g,b_i)$ $|$ $1 \leq i \leq g-1$, $d,e,f \in \Z/L$}\} \subset V_1^{A,1},\\
V_1^{A,3} = &\{\text{$X(v,a_g,s) \in V_1^{A,2}$ $|$ the $s$ and $a_g$-coordinates of $v$ do not equal $L-1$}\} \subset
V_1^{A,2}.
\end{align*}
We will prove that $\Span{V_1^{A,3}} = \Span{V_1^{A,1}}$.  Using the third relation in Lemma \ref{lemma:qrelations},
we see that $\Span{V_1^{A,3}} = \Span{V_1^{A,2}}$.  It is thus enough to prove that
$\Span{V_1^{A,1}} = \Span{V_1^{A,2}}$.  An element of
$V_1^{A,1} \setminus V_1^{A,2}$ is of the form $X(w'', a_g, b_i)$.  Lemma \ref{lemma:newrelation1} says that
$$X(w''-a_i,b_i,a_g) = X(w'',b_i,a_g) + X(w''-a_i,a_i,a_g) - X(w''-a_i+b_i,a_i,a_g).$$
Iterating this, we conclude that modulo $\Span{V_1^{A,2}}$, we have $X(w'',a_g,b_i)$ equal to an
element of $V_1^{A,2}$, as desired.

We deduce from the above two paragraphs
that $\BBB{g}^2 = \psi(\Span{V_1^{A,3}}) + \BBB{g}^3$.  Since $V_1^{A,3}$ contains
\begin{align*}
(g-1)(L^2(L-1)^2 + L(L-1)^2) &= ((g-1) (L^2-1) L^2 + L^2) - ((g-1)(L^2-1)L + L^2)\\
&= \Dim(\BBB{g}^2) - \Dim(\BBB{g}^3)
\end{align*}
elements, we obtain that $\psi|_{\Span{V_1^{A,3}}}$ is injective and
$\BBB{g}^2 = \psi(\Span{V_1^{A,3}}) \oplus \BBB{g}^3$.  By Step 1,
the fact that $\Span{V_1^I,V_1^A} = \Span{V_1^I,V_1^{A,1}}$, and the fact that
$\Span{V_1^{A,3}} = \Span{V_1^{A,1}}$,
we conclude that $\psi|_{\Span{V_1^I, V_1^{A}}}$
is injective and $\BBB{g} = \psi(\Span{V_1^I, V_1^{A}}) \oplus \BBB{g}^3$, as desired.

\begin{step}
Recall that $\BBB{g}^1 = \Span{\{\text{$\KRHO{c a_i + d b_i}$ $|$ $c,d \in \Z/L$, $1 \leq i \leq g$}\}}$.  The
map $\psi|_{\Span{V_1^I, V_1^A, V_1^B}}$ is injective and
$\BBB{g} = \psi(\Span{V_1^I, V_1^A, V_1^B}) \oplus \BBB{g}^1$.
\end{step}

\noindent
The argument for this step is very similar to the argument in Step 2, so we only sketch it.  Define
\begin{align*}
V_1^{B,1} = &\{\text{$X(c a_i + d b_i + f b_g,b_g,s)$ $|$ $1 \leq i \leq g-1$, $c,d,f \in \Z/L$, $s \in \{a_i,b_i\}$}\} \subset V_1^B,\\
V_1^{B,2} = &\{\text{$X(c a_i + d b_i + f b_g,b_g,a_i)$ $|$ $1 \leq i \leq g-1$, $c,d,f \in \Z/L$}\} \\
&\quad \cup \{\text{$X(d b_i + f b_g,b_g,b_i)$ $|$ $1 \leq i \leq g-1$, $d,f \in \Z/L$}\} \subset V_1^{B,1},\\
V_1^{B,3} = &\{\text{$X(v,b_g,s) \in V_1^{B,2}$ $|$ the $s$ and $b_g$-coordinates of $v$ do not equal $L-1$}\} \subset V_1^{B,2}.
\end{align*}
Noting that $\psi(V_1^{B,1}) \subset \BBB{g}^3$, arguments similar to those in Step 2
show that $\Span{V_1^I, V_1^A, V_1^B} = \Span{V_1^I, V_1^A, V_1^{B,1}}$, that
$\BBB{g}^3 = \psi(\Span{V_1^{B,1}}) + \BBB{g}^1$, that
$\Span{V_1^{B,1}} = \Span{V_1^{B,2}}$, and that
$\Span{V_1^{B,2}} = \Span{V_1^{B,3}}$.

We deduce that $\BBB{g}^3 = \psi(\Span{V_1^{B,3}}) + \BBB{g}^1$.  Since $V_1^{B,3}$ contains
\begin{align*}
(g-1)((L-1)^2 L+(L-1)^2) &= ((g-1)(L^2-1)L + L^2) - (g(L^2-1)+1)\\
&= \Dim(\BBB{g}^3) - \Dim(\BBB{g}^1)
\end{align*}
elements, we obtain that $\psi|_{\Span{V_1^{B,3}}}$ is injective and
$\BBB{g}^3 = \psi(\Span{V_1^{B,3}}) \oplus \BBB{g}^1$.  By Step 2 and
the identities
$$\Span{V_1^I,V_1^A,V_1^B} = \Span{V_1^I,V_1^{A},V_1^{B,1}} \quad \text{and} \quad \Span{V_1^{B,3}} = \Span{V_1^{B_1}},$$
we conclude that
$\psi|_{\Span{V_1^I,V_1^{A}, V_1^B}}$
is injective and $\BBB{g} = \psi(\Span{V_1^I, V_1^{A}, V_1^B}) \oplus \BBB{g}^1$, as desired.
\end{proof}

Our final lemma is a further relation in $\QQQ{g}$.

\begin{lemma}
\label{lemma:newrelation2}
Let $\{a_1',b_1',a_2',b_2'\}$ be a unimodular subset of $\HH_1(\Sigma_g;\Z/L)$ with
$i(a_1',b_1')=i(a_2',b_2')=1$ and $i(a_1',a_2')=i(a_1',b_2')=i(b_1',a_2')=i(b_1',b_2')=0$.  Then
for all $v \in \HH_1(\Sigma_g;\Z/L)$ we have
\begin{align*}
&X(v,a_1',a_2') - X(v+b_1',a_1',a_2') - X(v+b_2',a_1',a_2') + X(v+b_1'+b_2',a_1',a_2')\\
&\quad \quad = X(v,b_1',b_2') - X(v+a_1',b_1',b_2') - X(v+a_2',b_1',b_2') + X(v+a_1'+a_2',b_1',b_2')
\end{align*}
\end{lemma}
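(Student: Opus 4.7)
The plan is to recognize both sides of the claim as iterated finite-difference operators and then apply Lemma \ref{lemma:newrelation1} twice. Define the finite-difference operator $\Delta_s$ on functions of $v \in \HH_1(\Sigma_g;\Z_L)$ by $\Delta_s f(v) = f(v+s) - f(v)$; these operators commute with each other since they act only on the $v$-variable. A direct check shows that the left-hand side of the claim equals $\Delta_{b_1'} \Delta_{b_2'} X(v, a_1', a_2')$ and the right-hand side equals $\Delta_{a_1'} \Delta_{a_2'} X(v, b_1', b_2')$. Moreover, Lemma \ref{lemma:newrelation1} rearranges into the clean symmetry
\[
\Delta_{s_1} X(v, s_2, s_3) = \Delta_{s_2} X(v, s_1, s_3)
\]
whenever $\{s_1, s_2, s_3\}$ is unimodular with $i(s_1, s_3) = i(s_2, s_3) = 0$ and $-1 \leq i(s_1, s_2) \leq 1$. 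Our job is to use this symmetry to migrate $b_2'$ and $b_1'$ from the $v$-slot into the second and third arguments of $X$.

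First I would apply the symmetry with $(s_1, s_2, s_3) = (b_2', a_2', a_1')$: the intersection conditions $i(b_2', a_1') = 0$, $i(a_2', a_1') = 0$ and $i(b_2', a_2') = -1$ are all satisfied, and the subset $\{b_2', a_2', a_1'\}$ inherits unimodularity. Combined with relation 1 of $\AAA{g}$ (which lets me swap the last two arguments of $X$), this yields
\[
\Delta_{b_2'} X(v, a_1', a_2') = \Delta_{a_2'} X(v, a_1', b_2').
\]
Applying $\Delta_{b_1'}$ to both sides and using that $\Delta_{b_1'}$ and $\Delta_{a_2'}$ commute, the left-hand side of the claim becomes $\Delta_{a_2'} \Delta_{b_1'} X(v, a_1', b_2')$.

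Next I would apply Lemma \ref{lemma:newrelation1} again, now with $(s_1, s_2, s_3) = (b_1', a_1', b_2')$. The intersection conditions $i(b_1', b_2') = 0$, $i(a_1', b_2') = 0$, $i(b_1', a_1') = -1$ are satisfied and $\{b_1', a_1', b_2'\}$ is unimodular. This yields
\[
\Delta_{b_1'} X(v, a_1', b_2') = \Delta_{a_1'} X(v, b_1', b_2').
\]
Substituting into the previous expression gives $\Delta_{a_2'} \Delta_{a_1'} X(v, b_1', b_2') = \Delta_{a_1'} \Delta_{a_2'} X(v, b_1', b_2')$, which is exactly the right-hand side of the claim. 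There is no real obstacle here; the only thing requiring vigilance is tracking the signs of the cross intersection numbers so that the hypothesis $-1 \leq i(s_1, s_2) \leq 1$ of Lemma \ref{lemma:newrelation1} is met at each step, which is automatic from the hypotheses on $\{a_1', b_1', a_2', b_2'\}$.
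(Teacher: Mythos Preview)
Your proof is correct and is genuinely different from the paper's argument. The paper first invokes the $\Sp_{2g}(\Z)$-action on $\AAA{g}$ to reduce to the standard symplectic basis $a_i'=a_i$, $b_i'=b_i$; in that case both sides of the identity lie in $\Span{V_1}$, and the paper then checks that $\psi$ sends both sides to the same element of $\BBB{g}$ and concludes using the injectivity of $\psi|_{\Span{V_1}}$ established in Lemma~\ref{lemma:v1injective}. Your argument bypasses both of these steps: recasting Lemma~\ref{lemma:newrelation1} as the symmetry $\Delta_{s_1}X(v,s_2,s_3)=\Delta_{s_2}X(v,s_1,s_3)$ and applying it twice (once with $(b_2',a_2',a_1')$, once with $(b_1',a_1',b_2')$) yields the identity directly, using only relations 1 and 4 of $\AAA{g}$. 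The hypotheses of Lemma~\ref{lemma:newrelation1} are satisfied at each step since any subset of the given unimodular set $\{a_1',b_1',a_2',b_2'\}$ is again unimodular and the relevant intersection numbers are $0$ or $\pm 1$. Your route is more elementary and avoids the dependence on the lengthy Lemma~\ref{lemma:v1injective}; the paper's route, by contrast, illustrates how the map $\psi$ can be used to verify relations in $\AAA{g}$ once injectivity on $\Span{V_1}$ is in hand.
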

\begin{proof}
The group $\Sp_{2g}(\Z)$ acts on $\QQQ{g}$, and there exists some $f \in \Sp_{2g}(\Z)$ such that
$f(a_i') = a_i$ and $f(b_i') = b_i$ for $i=1,2$.  We can therefore assume that $a_i'=a_i$ and $b_i'=b_i$ for $i=1,2$.
But an easy calculation shows that $\psi$ takes both sides of our relation to the same element of $\BBB{g}$,
so the lemma follows from Lemma \ref{lemma:v1injective}.
\end{proof}

We can now prove Lemma \ref{lemma:psiisomorphism}.

\begin{proof}[{Proof of Lemma \ref{lemma:psiisomorphism}}]
Define $\QQQ{g}' = \QQQ{g} / \Span{V_1}$ and $\BBB{g}' = \BBB{g} / \psi(\Span{V_1})$.  We
have an induced map $\psi' : \QQQ{g}' \rightarrow \BBB{g}'$.  Using the direct sum
decomposition of Lemma \ref{lemma:v1injective}, we
will identify $\BBB{g}'$ with the subspace
$$\Span{\{\text{$\KRHO{c a_i + d b_i}$ $|$ $c,d \in \Z/L$, $1 \leq i \leq g$}\}}$$
of $\BBB{g}$.  Letting $V_2' \subset \QQQ{g}'$ be the image of $V_2 \subset \QQQ{g}$,
Lemmas \ref{lemma:qgenerators2} and \ref{lemma:v1injective} say
that it is enough to prove that $\psi'|_{\Span{V_2'}}$ is injective
and $\BBB{g}' = \psi'(\Span{V_2'}) \oplus \Span{\KRHO{0}}$.

Let $\phi : \QQQ{g} \rightarrow \QQQ{g}'$ be the projection.  The proof will require
seven claims.  It follows the same pattern as Steps 2 and 3 of the proof of Lemma
\ref{lemma:v1injective}.  In Claims \ref{claim:ywelldefined}--\ref{claim:yrelation} and
\ref{claim:zrelation}--\ref{claim:vgenerators}, we will obtain a ``minimal'' size generating set
for $\Span{V_2'}$.  In Claims \ref{claim:yimage} and \ref{claim:v2primespans}, we will show that
$\BBB{g}' = \psi'(\Span{V_2'}) + \Span{\KRHO{0}}$.  A dimension count will then establish
the lemma.

\BeginClaims
\begin{claim}
\label{claim:ywelldefined}
Let $s,s_1,s_2 \in B$ satisfy $s \neq s_1,s_2$ and $i(s,s_1)=i(s,s_2)=0$.
Then for all $v \in \HH_1(\Sigma_g;\Z/L)$ and $e_1,e_2 \in \{-1,1\}$, we have
$\phi(X(v,s,s+e_1 s_1)) = \phi(X(v,s,s+e_2 s_2))$.
\end{claim}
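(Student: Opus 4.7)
The plan is to exhibit a single element of $\AAA{g}$ which, when expanded via relation 4 in two different ways, produces $X(v, s+e_1 s_1, s)$ and $X(v, s-e_2 s_2, s)$ together with ``extra'' terms that lie in $\Span{V_1}$. First I would apply the symmetry relation 1 to rewrite $X(v,s,s+e_i s_i) = X(v, s+e_i s_i, s)$, and then decompose $X(v, s + e_1 s_1 - e_2 s_2, s)$ via relation 4 using the two splits $(s+e_1 s_1)+(-e_2 s_2)$ and $(s-e_2 s_2)+(e_1 s_1)$.

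To invoke relation 4 I would verify the intersection conditions: since $s, s_1, s_2$ are distinct elements of the symplectic basis $B$, the triple $\{s, e_1 s_1, -e_2 s_2\}$ is unimodular, $i(s, e_i s_i) = 0$, and $|i(s + e_1 s_1, -e_2 s_2)| = |i(s - e_2 s_2, e_1 s_1)| = |i(s_1, s_2)| \leq 1$. The two expansions each produce one ``extra'' term of the form $X(\cdot, \pm s_j, s)$ with $s_j, s \in B$ distinct and $i(s_j, s) = 0$; applying relation 2 to absorb the sign, each such term lies in $V_1$. Equating the two expansions thus yields $X(v, s + e_1 s_1, s) \equiv X(v, s - e_2 s_2, s)$ modulo $\Span{V_1}$. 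Substituting $e_2 \mapsto -e_2$ and invoking relation 1 once more then gives the desired identity $\phi(X(v, s, s + e_1 s_1)) = \phi(X(v, s, s + e_2 s_2))$ whenever $s_1 \neq s_2$.

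The degenerate sub-case $s_1 = s_2$ with $e_1 \neq e_2$ I would handle by transitivity: choose an auxiliary $s_3 \in B$ distinct from $s$ and $s_1$ with $i(s, s_3) = 0$ (which exists since $g \geq 4$) and apply the non-degenerate case twice, comparing each of $\phi(X(v, s, s + e_1 s_1))$ and $\phi(X(v, s, s - e_1 s_1))$ with $\phi(X(v, s, s + s_3))$. I expect the only (mild) obstacle to be careful sign bookkeeping when pulling signs out through relation 2; otherwise the argument is a direct unwinding of the defining relations of $\AAA{g}$.
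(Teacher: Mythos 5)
Your proposal is correct and takes essentially the same approach as the paper: both proofs decompose an auxiliary element $X(v, e_1 s_1 \pm e_2 s_2 + s, s)$ via relation 4 in two ways, using the symmetry relation 1 and the sign-absorption relation 2 to see that the extra terms land in $\Span{V_1}$. Your sign choice ($-e_2 s_2$ rather than $+e_2 s_2$) just forces the final substitution $e_2 \mapsto -e_2$, so the two arguments are interconvertible. One small credit to you: you explicitly flag that the auxiliary element $X(v, s + e_1 s_1 - e_2 s_2, s)$ is not defined when $s_1 = s_2$ (since the relevant span is no longer unimodular) and note that this degenerate case must be recovered by transitivity via a third element of $B$ — a point the paper leaves implicit, and which is where $g \geq 4$ actually gets used.
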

\begin{proof}[{Proof of Claim}]
For $1 \leq i \leq 2$, using the second relation in Lemma \ref{lemma:qrelations}, we get that
$$X(v+e_1 s_1+s,e_2 s_2,s), X(v+e_2 s_2+s,e_1 s_1, s) \in \Span{V_1}.$$
Hence
$$\phi(X(v,e_1 s_1 + e_2 s_2+s,s)) = \phi(X(v,e_1 s_1+s,s) + X(v+e_1 s_1+s,e_2 s_2,s)) = \phi(X(v,s,s+e_1 s_1))$$
and
$$\phi(X(v,e_1 s_1 + e_2 s_2+s,s)) = \phi(X(v,e_2 s_2+s,s) + X(v+e_2 s_2+s,e_1 s_1, s)) = \phi(X(v,s,s+e_2 s_2)).$$
The claim follows.
\end{proof}

In light of Claim \ref{claim:ywelldefined}, we will denote by $Y(v,s)$ the image in $V_2'$ of
$X(v,s,s + e' s')$, where $e' \in \{-1,1\}$ and $s' \in B$ are arbitrary elements such that
$X(v,s,s + e' s') \in V_2$.

\begin{claim}
\label{claim:reparamy}
Consider $Y(v,s) \in V_2'$.  Pick $1 \leq i \leq g$ such that $s \in \{a_i,b_i\}$.
Write $v = v_1 + v_2$ with $v_1 \in \Span{a_i,b_i}$
and $v_2 \in \Span{\{\text{$a_j$, $b_j$ $|$ $j \neq i$}\}}$.  Then $Y(v,s) = Y(v_1,s)$.
\end{claim}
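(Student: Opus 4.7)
The plan is to reduce the claim to a single-step shift: show that $Y(v,s) = Y(v+t,s)$ for every basis vector $t \in \{a_j, b_j\}$ with $j \neq i$. Iterating this identity along the $a_j$- and $b_j$-coordinates of $v_2$ then yields $Y(v,s) = Y(v_1,s)$, so the real content of the claim lies in this single shift.

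To establish the shift, I will first use Claim \ref{claim:ywelldefined} to represent $Y(v,s) = \phi(X(v,s,s+e's'))$ for any convenient $s' \in B$ and $e' \in \{-1,1\}$ with $i(s,s') = 0$. Since $g \geq 4$, I can pick an index $k \notin \{i,j\}$ and take $s' \in \{a_k, b_k\}$, so that the indices $i$, $j$, $k$ are pairwise distinct. Then I apply Lemma \ref{lemma:newrelation1} with $s_1 = t$, $s_2 = s$, $s_3 = s + e's'$; the hypotheses $i(s_1,s_3) = i(s_2,s_3) = 0$, $i(s_1,s_2) = 0$, and unimodularity of $\{t, s, s+e's'\}$ are all immediate from the disjointness of $\{i\}, \{j\}, \{k\}$. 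Rearranging the relation of Lemma \ref{lemma:newrelation1} gives
\begin{equation*}
X(v,s,s+e's') - X(v+t,s,s+e's') = X(v,t,s+e's') - X(v+s,t,s+e's').
\end{equation*}

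It remains to show that the right-hand side lies in $\Span{V_1}$, so that applying $\phi$ annihilates it. Using the symmetry relation $X(w, w_1, w_2) = X(w, w_2, w_1)$ followed by relation 4 of $\AAA{g}$, each term of the form $X(w,t,s+e's')$ expands as $X(w,s,t) + X(w+s,e's',t)$; the hypotheses of relation 4 hold because $\{s, t, e's'\}$ is unimodular with $i(t, e's') = 0$ and $i(s, e's') = 0 \in [-1,1]$. Both summands lie in $V_1$, the second after at most one use of relation 2 to absorb the sign $e'$. Hence each term on the right-hand side above vanishes modulo $\Span{V_1}$, giving $Y(v,s) = Y(v+t,s)$; iterating over the coordinates of $v_2$ completes the argument. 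The only bookkeeping worth attention is the repeated verification of unimodularity and intersection hypotheses at each use of the defining relations; having a spare basis index $k$ at one's disposal (courtesy of $g \geq 4$) is what makes this routine, and I do not anticipate a serious obstacle anywhere.
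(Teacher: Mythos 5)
Your proof is correct and follows essentially the same route as the paper: reduce to a single-basis-vector shift, represent $Y(v,s)$ via a well-chosen $X(v,s,s+e's')$ using Claim \ref{claim:ywelldefined}, apply Lemma \ref{lemma:newrelation1} with that configuration, and then show the two error terms collapse into $\Span{V_1}$ via the symmetry relation and relation 4 of $\AAA{g}$ (plus relation 2 to absorb the sign $e'$). The paper's version phrases the shift as $Y(v,s) = Y(v-s',s)$ and fixes $e'=1$, while you phrase it as $Y(v,s) = Y(v+t,s)$ and track the sign, but these are the same computation; also, you only need $g \geq 3$ (three distinct indices $i,j,k$), not $g \geq 4$, though this is immaterial given the paper's standing hypothesis.
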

\begin{proof}[{Proof of Claim}]
Consider $s' \in \{\text{$a_j$, $b_j$ $|$ $j \neq i$}\}$.  It is enough to show that $Y(v-s',s) = Y(v,s)$.  Pick
$s'' \in \{\text{$a_j$, $b_j$ $|$ $j \neq i$}\}$ such that $s'' \neq s'$ and $i(s',s'')=0$ (this uses the fact
that $g \geq 3$).  Observe that
$Y(v,s) = \phi(X(v,s,s+s''))$ and $Y(v-s',s) = \phi(X(v-s',s,s+s''))$.  Lemma \ref{lemma:newrelation1} says that
\begin{equation}
\label{eqn:reparamy}
X(v-s',s,s+s'') = X(v,s,s+s'') + X(v-s',s',s+s'') - X(v-s'+s,s',s+s'').
\end{equation}
For $w$ equal to $v-s'$ or $v-s'+s$, we have
$$X(w,s',s+s'') = X(w,s+s'',s') = X(w,s,s') + X(w+s,s'',s') \in \Span{V_1}.$$
Applying $\phi$ to both sides of \eqref{eqn:reparamy}, we thus obtain that $Y(v,s) = Y(v-s',s)$, as desired.
\end{proof}

\begin{claim}
\label{claim:yrelation}
For all $1 \leq i \leq g$ and $v \in \Span{a_i,b_i}$, we have
$$Y(v,a_i)-2Y(v+b_i,a_i)+Y(v+2b_i,a_i) = Y(v,b_i) - 2Y(v+a_i,b_i) + Y(v+2a_i,b_i).$$
\end{claim}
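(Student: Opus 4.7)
The plan is to obtain Claim \ref{claim:yrelation} from a single application of Lemma \ref{lemma:newrelation2} to a carefully chosen pair of orthogonal symplectic pairs. Since $g \geq 4$, I fix distinct indices $k, l \in \{1, \ldots, g\} \setminus \{i\}$ and set
\[ a_1' = a_i + a_k, \quad b_1' = b_i - b_l, \quad a_2' = a_i + a_l, \quad b_2' = b_i - b_k. \]
A direct computation verifies $i(a_1', b_1') = i(a_2', b_2') = 1$ and that all four cross-intersections vanish; unimodularity of this 4-element set follows from exhibiting a $4 \times 4$ submatrix with determinant $\pm 1$.

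Having checked the hypotheses, I apply Lemma \ref{lemma:newrelation2} and then project to $\AAA{g}'$. The key simplifications are
\[ \phi(X(v, a_i + a_k, a_i + a_l)) = Y(v, a_i) \quad \text{and} \quad \phi(X(v, b_i - b_l, b_i - b_k)) = Y(v, b_i). \]
For the first, apply relation 4 of $\AAA{g}$ to write $X(v, a_i + a_k, a_i + a_l) = X(v, a_i, a_i + a_l) + X(v + a_i, a_k, a_i + a_l)$. The initial summand is $Y(v, a_i)$ by the definition of $Y$, and a second application of relation 4 (together with relation 1) decomposes the other summand into two elements of $V_1$, which vanish in $\AAA{g}'$. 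The identity on the $b$-side is handled similarly, using relation 2 to convert the stray $-b_l$ into $b_l$ where needed.

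Next I invoke Claim \ref{claim:reparamy}, which says $Y(\cdot, a_i)$ depends only on the $\Span{a_i, b_i}$-component of its first argument (and analogously for $Y(\cdot, b_i)$). The LHS shifts of Lemma \ref{lemma:newrelation2} are by $b_1' = b_i - b_l$, $b_2' = b_i - b_k$, and $b_1' + b_2' = 2b_i - b_k - b_l$, all of which reduce to $b_i$, $b_i$, and $2b_i$ after forgetting the $\Span{a_m, b_m : m \neq i}$-component. The LHS therefore collapses in $\AAA{g}'$ to $Y(v, a_i) - 2Y(v + b_i, a_i) + Y(v + 2b_i, a_i)$, and symmetrically the RHS becomes $Y(v, b_i) - 2Y(v + a_i, b_i) + Y(v + 2a_i, b_i)$, proving the claim.

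The main obstacle is locating the correct configuration for Lemma \ref{lemma:newrelation2}. A naive attempt to set $a_1' = a_i$ (so that $\phi(X(v, a_1', a_2'))$ is directly a $Y(v, a_i)$) is forced by the hypothesis $i(a_1', b_2') = 0$ to put $b_2'$ into the symplectic complement of $a_i$, which then prevents $b_2'$ from shifting $Y(v, a_i)$ by $b_i$ as needed. One is pushed to displace both $a_1'$ and $a_2'$ off $a_i$, and the choice above is essentially the simplest such configuration in which the additional $a_k, a_l$ (respectively $b_k, b_l$) terms can be absorbed into $\Span{V_1}$ via relation 4.
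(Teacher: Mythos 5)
Your proposal is correct and takes essentially the same route as the paper: the tuple $(a_i+a_k,\, b_i-b_l,\, a_i+a_l,\, b_i-b_k)$ to which you apply Lemma~\ref{lemma:newrelation2} is precisely the paper's choice (with the paper's index $j$ renamed to $l$), and the subsequent reduction via Claim~\ref{claim:reparamy} and the $\AAA{g}$-relations matches the paper's argument. The only difference is that you spell out in more detail why terms such as $X(v+a_i,a_k,a_i+a_l)$ lie in $\Span{V_1}$, a point the paper simply asserts.
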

\begin{proof}[{Proof of Claim}]
Pick $1 \leq j < k \leq g$ such that $i \neq j,k$ (this uses the fact that $g \geq 3$).  Lemma \ref{lemma:newrelation2} applied with
$(a_1',b_1',a_2',b_2') = (a_i+a_k,b_i-b_j,a_i+a_j,b_i-b_k)$ says that
\begin{align}
&X(v,a_i+a_k,a_i+a_j) - X(v+b_i-b_j,a_i+a_k,a_i+a_j) - X(v+b_i-b_k,a_i+a_k,a_i+a_j) \label{eqn:xrelation}\\
&+ X(v+b_i-b_j+b_i-b_k,a_i+a_k,a_i+a_j) \notag\\
= &X(v,b_i-b_j,b_i-b_k) - X(v+a_i+a_k,b_i-b_j,b_i-b_k) - X(v+a_i+a_j,b_i-b_j,b_i-b_k) \notag\\
&+ X(v+a_i+a_k+a_i+a_j,b_i-b_j,b_i-b_k). \notag
\end{align}
Since $X(v+a_i,a_k,a_i+a_j) \in \Span{V_1}$, we have that
$$\phi(X(v,a_i+a_k,a_i+a_j)) = \phi(X(v,a_i,a_i+a_j) + X(v+a_i,a_k,a_i+a_j)) = Y(v,a_i).$$
Similarly, we have $\phi(X(v+b_i-b_j,a_i+a_k,a_i+a_j)) = Y(v+b_i-b_j,a_i)$.  By Claim \ref{claim:reparamy},
this equals $Y(v+b_i,a_i)$.  Continuing in this manner, we deduce that $\phi$ maps \eqref{eqn:xrelation}
to the desired relation between the $Y(\cdot,\cdot)$.
\end{proof}

For the next claim, recall that we are using Lemma \ref{lemma:v1injective} to identify
$\BBB{g}' = \BBB{g} / \psi(\Span{V_1})$
with the subspace $\Span{\{\text{$\KRHO{c a_i + d b_i}$ $|$ $c,d \in \Z/L$, $1 \leq i \leq g$}\}}$
of $\BBB{g}$.

\begin{claim}
\label{claim:yimage}
For some $1 \leq i \leq g$, let $s \in \{a_i,b_i\}$ and $v \in \Span{a_i,b_i}$.  Then
$\psi'(Y(v,s)) = \KRHO{v} - 2\KRHO{v+s} + \KRHO{v+2s}$.
\end{claim}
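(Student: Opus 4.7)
The plan is to prove the claim by a direct calculation, unfolding the definition of $Y(v,s)$ and then killing the unwanted terms using a single relation coming from $V_1$. Since the target identity involves only the values $\KRHO{v}, \KRHO{v+s}, \KRHO{v+2s}$, essentially all the work lives in the quotient $\BBB{g}' = \BBB{g}/\psi(\Span{V_1})$ given to us by Lemma \ref{lemma:v1injective}.

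First, because $s \in \{a_i, b_i\}$ and $g \geq 4$, I can choose $s' \in B \setminus \{a_i, b_i\}$ (e.g.\ $s' = a_j$ for any $j \neq i$). Then $s,s' \in B$ are distinct with $i(s,s')=0$, so $X(v,s,s+s')$ is a bona fide element of $V_2$ and $Y(v,s) = \phi(X(v,s,s+s'))$; Claim \ref{claim:ywelldefined} lets me pick this particular auxiliary $s'$ and $e' = 1$ without loss of generality.

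Next, I would apply the explicit formula
$$\psi(X(v,w_1,w_2)) = \KRHO{v} - \KRHO{v+w_1} - \KRHO{v+w_2} + \KRHO{v+w_1+w_2}$$
with $(w_1,w_2) = (s, s+s')$ to get
$$\psi(X(v,s,s+s')) = \KRHO{v} - \KRHO{v+s} - \KRHO{v+s+s'} + \KRHO{v+2s+s'}.$$
Subtracting the desired target $\KRHO{v} - 2\KRHO{v+s} + \KRHO{v+2s}$, the remainder is
$$\KRHO{v+s} - \KRHO{v+2s} - \KRHO{v+s+s'} + \KRHO{v+2s+s'},$$
which I recognize as $\psi(X(v+s, s, s'))$. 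Since $s, s' \in B$ are distinct and $i(s,s') = 0$, we have $X(v+s, s, s') \in V_1$, so this remainder lies in $\psi(\Span{V_1})$ and therefore vanishes in $\BBB{g}'$. This yields $\psi'(Y(v,s)) = \KRHO{v} - 2\KRHO{v+s} + \KRHO{v+2s}$, as required.

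There is no real obstacle: the content of the claim is precisely that a single elementary $V_1$-relation converts the ``parallelogram'' expression $\psi(X(v,s,s+s'))$ into the clean one-variable second difference in the $s$-direction. The only minor care needed is to verify that the auxiliary $s'$ can be chosen at all (which uses $g \geq 2$, well within our hypothesis $g \geq 4$) and to invoke Claim \ref{claim:ywelldefined} so that the particular choice of $s'$ and $e'$ does not matter.
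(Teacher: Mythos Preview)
Your proof is correct and takes essentially the same approach as the paper: pick an auxiliary $s' \in B$ orthogonal to $s$, expand $\psi(X(v,s,s+e's'))$, and correct by a single $V_1$-element to obtain the second difference in $s$. The paper uses $e'=-1$ with $s'=a_j$ and adds $X(v+s-a_j,a_j,s)\in V_1$, whereas you use $e'=+1$ and subtract $X(v+s,s,s')\in V_1$; these are trivially equivalent variants of the same computation.
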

\begin{proof}[{Proof of Claim}]
Let $\rho : \BBB{g} \rightarrow \BBB{g}'$ be the projection.  Pick $1 \leq j \leq g$ such that $j \neq i$.  Observe
that $Y(v,s) = \phi(X(v,s,s-a_j))$ and $X(v+s-a_j,a_j,s) \in V_1$.  Thus $\psi'(Y(v,s))$ equals
\begin{align*}
\rho(\psi(X(v,s-a_j,s))) &= \rho(\psi(X(v,s-a_j,s) + X(v+s-a_j,a_j,s))) \\
&= \rho((\KRHO{v} - \KRHO{v+s-a_j} - \KRHO{v+s} + \KRHO{v+2s-a_j}) \\
&\quad\quad + (\KRHO{v+s-a_j} - \KRHO{v+s} - \KRHO{v+2s-a_j} + \KRHO{v+2s})\\
&= \rho(\KRHO{v} - 2\KRHO{v+s} + \KRHO{v+2s}).
\end{align*}
Since $v,v+s,v+2s \in \Span{a_i,b_i}$, this equals $\KRHO{v} - 2 \KRHO{v+2} + \KRHO{v+2s}$, as desired.
\end{proof}

For some $1 \leq i \leq g$, consider $s \in \{a_i,b_i\}$ and $v \in \Span{a_i,b_i}$.  Making use of Claim
\ref{claim:yimage}, an easy induction establishes that for $n \geq 1$, we have
$$\psi'(\sum_{k=1}^n k \cdot Y(v+(k-1)s,s)) = \KRHO{v} - (n+1) \KRHO{v+n s} + n \KRHO{v+(n+1) s}.$$
In particular, setting $Z(v,s) = \sum_{k=1}^{L} k \cdot Y(v+(k-1)s,s)$, we have
\begin{equation}
\label{eqn:zformula}
\psi'(Z(v,s)) = L \KRHO{v+s} - L \KRHO{v}.
\end{equation}
We now prove the following.

\begin{claim}
\label{claim:zrelation}
For all $1 \leq i \leq g$ and $v \in \Span{a_i,b_i}$, we have
$$Z(v,a_i) - 2 Z(v+b_i,a_i) + Z(v+2 b_i,a_i) = L \cdot Y(v+a_i,b_i) - L \cdot Y(v, b_i).$$
\end{claim}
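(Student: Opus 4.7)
The plan is to expand the left-hand side using the definition $Z(v,s) = \sum_{k=1}^{L} k \cdot Y(v+(k-1)s,s)$, apply Claim \ref{claim:yrelation} termwise to convert ``$a_i$-second-differences'' into ``$b_i$-second-differences,'' and then reorganize the resulting weighted sum so that most terms cancel.

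First I would write
\begin{align*}
Z(v,a_i) - 2 Z(v+b_i,a_i) + Z(v+2 b_i,a_i) = \sum_{k=1}^{L} k \cdot \bigl[& Y(v+(k-1)a_i,a_i) - 2 Y(v+b_i+(k-1)a_i,a_i) \\
& + Y(v+2b_i+(k-1)a_i,a_i)\bigr].
\end{align*}
For each $k$, the element $v + (k-1) a_i$ lies in $\Span{a_i,b_i}$, so Claim \ref{claim:yrelation} (applied with $v$ replaced by $v+(k-1)a_i$) transforms the bracketed expression into
$$Y(v+(k-1)a_i,b_i) - 2 Y(v+k a_i,b_i) + Y(v+(k+1)a_i,b_i).$$

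Next I would abbreviate $Y_j = Y(v + j a_i,b_i)$. Since $L \cdot a_i = 0$ in $\HH_1(\Sigma_g;\Z_L)$, the symbol $Y_j$ depends only on $j$ modulo $L$, so $Y_L = Y_0$ and $Y_{L+1} = Y_1$. The sum to compute becomes
$$\sum_{k=1}^{L} k \bigl( Y_{k-1} - 2 Y_k + Y_{k+1} \bigr).$$
Reindexing the first and third sums gives
$$\sum_{j=0}^{L-1} (j+1) Y_j \;-\; 2 \sum_{j=1}^{L} j\, Y_j \;+\; \sum_{j=2}^{L+1} (j-1) Y_j.$$
Using $Y_L = Y_0$ and $Y_{L+1} = Y_1$ to collect the coefficient of each $Y_j$ for $0 \le j \le L-1$, one finds the coefficient of $Y_0$ is $1 - 2L + (L-1) = -L$, the coefficient of $Y_1$ is $2 - 2 + L = L$, and the coefficient of $Y_j$ for $2 \le j \le L-1$ is $(j+1) - 2j + (j-1) = 0$. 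Hence the sum equals $L Y_1 - L Y_0 = L \cdot Y(v + a_i, b_i) - L \cdot Y(v, b_i)$, which is the desired right-hand side.

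There is no real obstacle here: the only nontrivial inputs are Claim \ref{claim:yrelation} and the $L$-periodicity of $Y_j$ coming from $L a_i = 0$; everything else is a bookkeeping computation of coefficients in a weighted telescoping sum.
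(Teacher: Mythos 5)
Your proof is correct and takes essentially the same approach as the paper's: expand $Z$ via its definition, apply Claim \ref{claim:yrelation} term by term with $v$ shifted to $v+(k-1)a_i$, and telescope the resulting weighted sum using $L$-periodicity of $Y(\cdot,b_i)$ in the $a_i$ direction. The paper merely compresses the final telescoping into a one-line reference to the earlier $\psi'(Z)$ computation, whereas you carry it out explicitly; the substance is identical.
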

\begin{proof}[{Proof of Claim}]
Observe that $Z(v,a_i) - 2 Z(v+b_i,a_i) + Z(v+2 b_i,a_i)$ equals
$$\sum_{k=1}^L k \cdot (Y(v+(k-1)a_i,a_i) - 2 Y(v+(k-1)a_i+b_i,a_i) + Y(v+(k-1)a_i+2 b_i,a_i)).$$
By Claim \ref{claim:yrelation}, this equals
$$\sum_{k=1}^L k \cdot (Y(v+(k-1)a_i,b_i) - 2Y(v+k a_i,b_i) + Y(v+(k+1)a_i,b_i)).$$
An argument similar to the argument used to calculate $\psi'$ of $Z(\cdot,\cdot)$ then
shows that this equals $L \cdot Y(v+a_i,b_i) - L \cdot Y(v, b_i)$, and we are done.
\end{proof}

\begin{claim}
\label{claim:vgenerators}
We have
\begin{align*}
\Span{V_2'} = &\langle \{\text{$Y(c a_i+d b_i, a_i)$ $|$ $1 \leq i \leq g$, $c,d \in \Z/L$, $c \neq L-1$}\} \\
&\quad \cup \{\text{$Y(d b_i,b_i)$ $|$ $1 \leq i \leq g$, $d \in \Z/L$, $d \neq L-1$}\} \rangle.
\end{align*}
\end{claim}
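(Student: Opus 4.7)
The plan is to start with the tautological generating set for $\Span{V_2'}$ and shrink it in two stages, the first using the sum relation inherited from the third defining relation of $\AAA{g}$, and the second using the telescoping identity supplied by Claim \ref{claim:zrelation}. By definition, $V_2'$ consists of the elements $Y(v,s)$ for $s \in B$ and $v \in \HH_1(\Sigma_g;\Z_L)$. Claim \ref{claim:reparamy} lets us replace $v$ by its projection onto $\Span{a_i,b_i}$ whenever $s \in \{a_i,b_i\}$, so $\Span{V_2'}$ is spanned over $\Q$ by the set $\{Y(c a_i + d b_i,a_i),\ Y(c a_i + d b_i,b_i) : 1 \leq i \leq g,\ c,d \in \Z_L\}$.

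Pushing the third defining relation of $\AAA{g}$ through $\phi$ gives $\sum_{k=0}^{L-1} Y(v+ks,s) = 0$ in $\AAA{g}'$ for every $v \in \HH_1(\Sigma_g;\Z_L)$ and every $s \in B$ (the free choice of the auxiliary element $s'$ used to define $Y(\cdot,\cdot)$ is exactly what Claim \ref{claim:ywelldefined} provides). Specializing $s = a_i$ and letting $v$ range over $\Span{b_i}$ solves for $Y((L-1) a_i + d b_i,a_i)$ in terms of the $Y(c a_i + d b_i,a_i)$ with $c < L-1$, so those $L$ generators (one per pair $(i,d)$) can be dropped. The analogous maneuver with $s = b_i$ and $v = 0$ will later dispose of $Y((L-1) b_i,b_i)$.

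The reduction of the $Y(\cdot,b_i)$ generators is the heart of the matter. Claim \ref{claim:zrelation} says that
$$L \cdot [Y(v+a_i,b_i) - Y(v,b_i)] = Z(v,a_i) - 2 Z(v+b_i,a_i) + Z(v+2 b_i,a_i),$$
and by definition each $Z(\cdot,a_i)$ is a $\Q$-linear combination of $Y(\cdot,a_i)$ elements. Hence every consecutive difference $Y((c+1) a_i + d b_i, b_i) - Y(c a_i + d b_i, b_i)$ lies in $\Span{\{Y(c' a_i + d' b_i,a_i) : c',d' \in \Z_L\}}$. Telescoping over $c$, the generator $Y(c a_i + d b_i,b_i)$ equals $Y(d b_i,b_i)$ modulo this subspace, so the $Y(\cdot,b_i)$ generators collapse to the single family $\{Y(d b_i,b_i) : d \in \Z_L\}$ without enlarging the $Y(\cdot,a_i)$ part.

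Combining the two stages gives the claimed generating set. The argument is essentially bookkeeping once Claims \ref{claim:reparamy} and \ref{claim:zrelation} are in hand; the only subtlety is making sure that the sum relations and the telescoping argument respect the constraint $v \in \Span{a_i,b_i}$ (guaranteed by Claim \ref{claim:reparamy}) throughout, so that no generators outside the advertised list are introduced. The dimension count $g(L-1)(L+1) = g(L^2-1)$ of the proposed generating set, together with the dimension of $\BBB{g}'/\Span{\KRHO{0}}$ computed from Lemma \ref{lemma:v1injective}, will then show in the remaining claims that this generating set is in fact a basis.
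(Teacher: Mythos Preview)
Your proof is correct and follows essentially the same route as the paper's: start from the generators supplied by Claim~\ref{claim:reparamy}, use Claim~\ref{claim:zrelation} (via the telescoping of consecutive differences $Y(v+a_i,b_i)-Y(v,b_i)$) to collapse the $Y(\cdot,b_i)$ family down to $\{Y(d b_i,b_i)\}$, and use the image under $\phi$ of the third defining relation of $\AAA{g}$ to discard the $c=L-1$ and $d=L-1$ cases. The only difference is cosmetic ordering---you apply the sum relation for $s=a_i$ before invoking Claim~\ref{claim:zrelation}, whereas the paper does it after---and this has no effect on the argument.
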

\begin{proof}[{Proof of Claim}]
Claim \ref{claim:reparamy} implies that
\begin{equation}
\label{eqn:vgenerators}
\Span{V_2'} = \{\text{$Y(v,s)$ $|$ $v \in \Span{a_i,b_i}$ and $s \in \{a_i,b_i\}$ for some $1 \leq  i \leq g$}\}.
\end{equation}
If $v \in \{a_i,b_i\}$ for some $1 \leq i \leq g$, then
$$Z(v,a_i) \in \langle \{\text{$Y(c a_i+d b_i, a_i)$ $|$ $1 \leq i \leq g$, $c,d \in \Z/L$}\} \rangle.$$
We can therefore use the relation in Claim \ref{claim:zrelation} to reduce \eqref{eqn:vgenerators} to
\begin{align}
\Span{V_2'} = &\langle \{\text{$Y(c a_i+d b_i, a_i)$ $|$ $1 \leq i \leq g$, $c,d \in \Z/L$}\} \label{eqn:bigset}\\
&\quad \cup \{\text{$Y(d b_i,b_i)$ $|$ $1 \leq i \leq g$, $d \in \Z/L$}\} \rangle. \notag
\end{align}
Finally, if $v \in \Span{a_i,b_i}$ and $s \in \{a_i,b_i\}$ for some $1 \leq i \leq g$, then from
the third relation in Lemma \ref{lemma:qrelations}, we obtain the relation
$\sum_{k=0}^{L-1} Y(v+k \cdot s,s) = 0$ in $\QQQ{g}'$.  This allows us to reduce \eqref{eqn:bigset} to
the claimed generating set, and we are done.
\end{proof}

\begin{claim}
\label{claim:v2primespans}
$\BBB{g}' = \Span{\psi'(V_2')} + \Span{\KRHO{0}}$.
\end{claim}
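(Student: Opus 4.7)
The plan is to exploit the formula for $\psi'(Z(v,s))$ recorded in equation \eqref{eqn:zformula}, namely
$$\psi'(Z(v,s)) = L\KRHO{v+s} - L\KRHO{v}$$
for $1 \leq i \leq g$, $s \in \{a_i,b_i\}$, and $v \in \Span{a_i,b_i}$. Since $Z(v,s)$ is by definition a $\Q$-linear combination of elements of $V_2'$, we have $Z(v,s) \in \Span{V_2'}$, so dividing by $L$ (we are working over $\Q$) we deduce that
$$\KRHO{v+s} - \KRHO{v} \in \psi'(\Span{V_2'})$$
for every such $v$ and $s$.

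First I would observe that this immediately implies, by induction on $|c| + |d|$, that
$$\KRHO{c a_i + d b_i} \equiv \KRHO{0} \pmod{\psi'(\Span{V_2'})}$$
for all $1 \leq i \leq g$ and all $c,d \in \Z_L$. Indeed, starting from $v = 0$ and repeatedly adding $a_i$ or $b_i$ (each step legal because the intermediate vectors remain in $\Span{a_i,b_i}$, which is the hypothesis needed for equation \eqref{eqn:zformula}) we reach any $c a_i + d b_i$ while only picking up elements of $\psi'(\Span{V_2'})$ along the way.

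Finally, I would conclude by recalling that under the identification of $\BBB{g}'$ with $\Span{\{\KRHO{c a_i + d b_i} \,|\, c,d \in \Z_L,\ 1 \leq i \leq g\}}$ from Lemma \ref{lemma:v1injective}, the elements $\KRHO{c a_i + d b_i}$ form a spanning set for $\BBB{g}'$. Since each such element is congruent to $\KRHO{0}$ modulo $\psi'(\Span{V_2'})$, this gives $\BBB{g}' = \psi'(\Span{V_2'}) + \Span{\KRHO{0}}$, as required. There is no real obstacle here: the claim is essentially a formal consequence of equation \eqref{eqn:zformula}, and the only point to verify is that each intermediate step of the induction stays inside a single summand $\Span{a_i, b_i}$ so that the $Z(v,s)$ formula applies.
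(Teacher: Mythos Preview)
Your proof is correct and follows essentially the same approach as the paper's: both arguments use equation \eqref{eqn:zformula} to deduce that $\KRHO{v+s}-\KRHO{v}\in\psi'(\Span{V_2'})$ for $v\in\Span{a_i,b_i}$ and $s\in\{a_i,b_i\}$, and then telescope (the paper writes out an explicit telescoping sum, you phrase it as an induction on $|c|+|d|$) to show every $\KRHO{ca_i+db_i}$ is congruent to $\KRHO{0}$.
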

\begin{proof}[{Proof of Claim}]
Consider $c,d \geq 0$ with $(c,d) \neq 0$.  With the convention that an empty sum
of abelian group elements is the zero element, we can use \eqref{eqn:zformula} to get that
\begin{align*}
\frac{1}{L} \psi'(\sum_{j=1}^c Z(j a_i, a_i) + \sum_{k=1}^d Z(c a_i + k b_i,b_i)) &= (\KRHO{c a_i} - \KRHO{0}) + (\KRHO{c a_i+d b_i} - \KRHO{c a_i}) \\
&= \KRHO{c a_i + d b_i} - \KRHO{0}.
\end{align*}
Here the first equality follows from the fact that the indicated sums become telescoping sums after applying
$\psi'$.  The claim follows.
\end{proof}

Observe now that the generating set for $\Span{V_2'}$ given by Claim \ref{claim:vgenerators} has
$$g(L(L-1) + L-1) = g(L^2-1) = \Dim(\BBB{g}') - 1$$
elements, so by Claim \ref{claim:v2primespans} we have that $\psi'|_{\Span{V_2'}}$ is injective
and $\BBB{g}' = \Span{\psi'(V_2')} \oplus \Span{\KRHO{0}}$, as desired.
\end{proof}

\section{Killing off $\SSS$}
\label{section:kills}

This section is devoted to the proof of Lemma \ref{lemma:killsker}.  The proof
itself is contained in \S \ref{section:killsproof}.  This is proceeded by \S \ref{section:killslemma},
which contains a technical lemma about essentially separate curves.

\subsection{Separating essentially separate curves}
\label{section:killslemma}

This section is devoted to the proof of the following lemma.

\begin{lemma}
\label{lemma:separateessentiallyseparate}
Consider $v,v' \in H_L$ and $x,y \in \KKK{g}$ such that $x$ and $y$ are essentially
separate and $\CLL{y} \in \CCC{g}$.  There then exists some $n \geq 1$ and $v_1,v_1',\ldots,v_n,v_n' \in H_L$ and 
$x_1,y_1,\ldots,x_n,y_n \in \KKK{g}$ with the following properties.
\begin{enumerate}
\item $\CL{x}{v} \otimes \CL{y}{v'} = \sum_{i=1}^n \CL{x_i}{v_i} \otimes \CL{y_i}{v_i'}$.
\item For all $1 \leq i \leq n$, one of the following two conditions is satisfied.
\begin{enumerate}
\item $\{x_i,y_i\}$ has the same unoriented intersection pattern 
as the curves in Figure \ref{figure:separate}.a, or
\item $x_i = (x'_i)^L$, where $\{x_i',y_i\}$ has the same unoriented 
intersection pattern as the curves in Figure \ref{figure:separate}.b.
\end{enumerate}
\end{enumerate}
\end{lemma}

\Figure{figure:separate}{Separate}{a,b. The intersection patterns needed for Lemma 
\ref{lemma:separateessentiallyseparate}
\CaptionSpace c. A nonseparating figure eight.
\CaptionSpace d. $[a,b]$ is a genus $1$ separating curve.}

\begin{remark}
The second condition in the conclusion of Lemma \ref{lemma:separateessentiallyseparate} implies
that $\CLL{y_i} \in \CCC{g}$.
\end{remark}

To prove Lemma \ref{lemma:separateessentiallyseparate}, we will need a definition and a lemma.

\begin{definition}
A curve $x \in \pi_1(\Sigma_g)$ is a {\em genus $k$ separating curve} if it can be realized
by a simple closed curve that separates $\Sigma_g$ into two pieces, one of which is homeomorphic
to $\Sigma_{k,1}$.  Two curves $y,z \subset \pi_1(\Sigma_g)$ form a {\em nonseparating figure eight} if
they have the same unoriented intersection pattern as the curves in Figure \ref{figure:separate}.c.
\end{definition}

\begin{remark}
If $a,b \in \pi_1(\Sigma_g)$ have the same unoriented intersection pattern as the curves
in Figure \ref{figure:separate}.d, then $[a,b] \in \pi_1(\Sigma_g)$ is a genus $1$ separating
curve.
\end{remark}

\begin{lemma}
\label{lemma:basis}
Let $S \subset \Sigma_g$ be a subsurface such that the basepoint lies on $\partial S$ and
$\Sigma_g \setminus S$ is connected.  Define $P_S = \Image(\pi_1(S) \rightarrow \pi_1(\Sigma_g))$
and $H_S = \Image(\HH_1(S;\Z/L) \rightarrow H_L)$.
Next, define the following sets.
\begin{align*}
U_1 &= \{\text{$\CLL{x^L}$ $|$ $x \in P_S$ is a simple closed nonseparating curve}\},\\
U_2 &= \{\text{$\CL{x}{v}$ $|$ $v \in H_S$, $x \in P_S$ is a genus $1$ separating curve}\},\\
U_3 &= \{\text{$\CL{y,z}{v}$ $|$ $v \in H_S$, $y,z \subset P_S$ form a nonseparating figure eight}\}.
\end{align*}
Then the following hold.
\begin{enumerate}
\item $\CCC{g} \cap \HH_1(\KKK{g} \cap P_S;\Q)$ is spanned by $U_2 \cup U_3$ and
$\HH_1(\KKK{g} \cap P_S;\Q)$ is spanned by $U_1 \cup U_2 \cup U_3$
\item If the genus of $S$ is positive, then $\CCC{g} \cap \HH_1(\KKK{g} \cap P_S;\Q)$ is spanned by $U_2$ and
$\HH_1(\KKK{g} \cap P_S;\Q)$ is spanned by $U_1 \cup U_2$.
\end{enumerate}
\end{lemma}
\begin{proof}
Define
$$\KKK{S} = \Ker(\pi_1(S) \rightarrow \HH_1(S;\Z/L)) \quad \quad \text{and} \quad \quad \CCC{S} = \Ker(\HH_1(\KKK{S};\Q) \rightarrow \HH_1(S;\Q)).$$
Since $\Sigma_g \setminus S$ is connected, the map $\HH_1(S;\Z/L) \rightarrow \HH_1(\Sigma_g;\Z/L)$ is injective.
Using the commutative diagram
$$\begin{CD}
\pi_1(S) @>>> \pi_1(\Sigma_g)\\
@VVV          @VVV \\
\HH_1(S;\Z/L) @>>> \HH_1(\Sigma_g;\Z/L)
\end{CD}$$
we deduce that the natural maps
$$\KKK{S} \longrightarrow \KKK{g} \cap P_S \quad \quad \text{and} \quad \quad \CCC{S} \longrightarrow \CCC{g} \cap \HH_1(\KKK{g} \cap P_S;\Q)$$
are surjective.  

Define
\begin{align*}
V_1' &= \Set[,]{$x^L \in \KKK{S}$}{$x \in \pi_1(S)$ can be realized by a simple closed curve that is either\\
nonseparating or freely homotopic to a boundary component}\\
V_2' &= \Set[,]{$x^w \in \KKK{S}$}{$x,w \in \pi_1(S)$ and $x$ maps to a genus $1$ separating curve in $\pi_1(\Sigma_g)$}\\
V_3' &= \Set[.]{$[y,z]^w \in \KKK{S}$}{$y,z,w \in \pi_1(S)$ and $\{y,z\}$ maps to a nonseparating figure\\
eight in $\pi_1(\Sigma_g)$}
\end{align*}
There is a short exact sequence
$$1 \longrightarrow [\pi_1(S),\pi_1(S)] \longrightarrow \KKK{S} \longrightarrow L \cdot \HH_1(S;\Z) \longrightarrow 1,$$
where $L \cdot \HH_1(S;\Z)$ denotes the subgroup $\{\text{$L \cdot v$ $|$ $v \in \HH_1(S;\Z)$}\}$.  The subgroup
of $\KKK{S}$ generated by $V_1'$ surjects onto $L \cdot \HH_1(S;\Z)$.  Also, making use
of a standard basis for $\pi_1(S)$, one can easily check that $V_2' \cup V_3'$ generates $[\pi_1(S),\pi_1(S)]$.
Finally, the proof of \cite[Theorem A.3]{PutmanCutPaste} shows that if the genus of $S$ is positive, then
$[\pi_1(S),\pi_1(S)]$ is generated by $V_2'$.  The upshot of this is that $\KKK{S}$ is generated
by $V_1' \cup V_2' \cup V_3'$ in all cases and by $V_1' \cup V_2'$ if the genus of $S$ is positive.

Defining $V_i$ to be the image of $V_i'$ in $\HH_1(\KKK{S};\Q)$, we obtain that
$\HH_1(\KKK{S};\Q)$ (resp. $\CCC{S}$) is spanned by $V_1 \cup V_2 \cup V_3$ (resp.\ $V_2 \cup V_3$) 
in all cases and by $V_1 \cup V_2$ (resp.\ $V_2$) if the genus of $S$ is positive.
The set $V_i$ maps to $U_i$ under the natural map $\HH_1(\KKK{S};\Q) \rightarrow \HH_1(\KKK{g} \cap P_S;\Q)$.  
Since this map and the restricted map $\CCC{S} \rightarrow \CCC{g} \cap \HH_1(\KKK{g} \cap P_S;\Q)$ 
are surjections, the lemma follows
\end{proof}

\begin{proof}[{Proof of Lemma \ref{lemma:separateessentiallyseparate}}]
Since $x$ and $y$ are essentially separate, we can decompose $\Sigma_g$ into the union
of two connected subsurfaces $S_1$ and $S_2$ with the following properties.
\begin{enumerate}
\item $S_1$ and $S_2$ both contain the basepoint.
\item $S_1 \cap S_2 = \partial S_1 = \partial S_2$.
\item $x \in \Image(\pi_1(S_1) \rightarrow \pi_1(\Sigma_g))$ and $y \in \Image(\pi_1(S_2) \rightarrow \pi_1(\Sigma_g))$.
\end{enumerate}
Applying Lemma \ref{lemma:basis} to each $S_i$, we can write
$$\CLL{x} = \sum_{i=1}^{k} \CL{x_i}{v_i} \quad \quad \text{and} \quad \quad \CLL{y} = \sum_{j=1}^{k'} \CL{y_j}{v_j'},$$
where $v_i,v_j' \in H_L$ and $x_i$ and $y_j$ satisfy the following conditions.
\begin{itemize}
\item $x_i \in \Image(\pi_1(S_1) \rightarrow \pi_1(\Sigma_g))$ and 
$y_j \in \Image(\pi_1(S_2) \rightarrow \pi_1(\Sigma_g))$ for all $i$ and $j$.  In particular, the curves
$x_i$ and $y_j$ are essentially separate for all $i$ and $j$.
\item $x_i$ is either a genus $1$ separating curve, the commutator of a nonseparating figure eight, or $z^L$ for some simple
closed nonseparating curve $z$.
\item $y_j$ is either a genus $1$ separating curve or the commutator of a nonseparating figure eight.
\end{itemize}
We then have
$$\CL{x}{v} \otimes \CL{y}{v'} = \sum_{i=1}^{k} \sum_{j=1}^{k'} \CL{x_i}{v+v_i} \otimes \CL{y_j}{v'+v_j'}.$$
We are almost done -- the only problem is that $x_i$ or $y_j$ might be the commutator of 
a nonseparating figure eight for some $i$ or $j$.  However, if that happens, then we may perform 
the above procedure again to the pair $\{x_i,y_j\}$,
but this time it is easy to see that we may ensure that the genera of $S_1$ and $S_2$ are both positive, in
which case we do not need to use nonseparating figure eights.
\end{proof}

\subsection{The proof of Lemma \ref{lemma:killsker}}
\label{section:killsproof}

In this final section, we prove Lemma \ref{lemma:killsker}.  We begin by
briefly recalling its statement.  The set
$$\SKER < \HH_1(\KKK{g};\Z) \otimes \CCC{g} = \HH_1(\KKK{g};\CCC{g})$$
is the span of the set
$$\{\text{$x \otimes y$ $|$ $(x,y) \in \SSS$}\} \cup \{\text{$x \otimes y - f(x) \otimes f(y)$ $|$ $x \in \HH_1(\KKK{g};\Z)$, $y \in \CCC{g}$, $f \in \Mod_g^1(L)$}\}.$$
Here $\SSS$ is the set defined in \S \ref{section:mainlemmaproof}.  Lemma
\ref{lemma:killsker} asserts that for $g \geq 4$, the image of $\SKER$ in
$\HH_1(\Mod_g^1(L);\CCC{g})$ is zero.

Let $\phi : \HH_1(\KKK{g};\Z) \otimes \CCC{g} \rightarrow \HH_1(\Mod_g^1(L);\CCC{g})$ be the
natural map.  Since inner automorphisms act trivially on homology, we have
\begin{equation}
\label{eqn:modsymmetry}
\phi(x \otimes y) = \phi(f(x) \otimes f(y))
\end{equation}
for all $x \otimes y \in \HH_1(\KKK{g};\Z) \otimes \CCC{g}$ and $f \in \Mod_g^1(L)$.  We
must show that in addition we have $\phi(s) = 0$ for $s \in \SSS$.  This will require
several steps.

\BeginSteps
\begin{step}
Consider $v,v' \in H_L$ and $x,y \in \KKK{g}$ such that $x$ and $y$ are essentially
separate and $\CLL{y} \in \CCC{g}$, so 
$(\CL{x}{v}, \CL{y}{v'}) \in \SSSS{1}$.  Then $\phi(\CL{x}{v} \otimes \CL{y}{v'})=0$.
\end{step}

By Lemma \ref{lemma:separateessentiallyseparate}, we can assume that one of the following
two conditions hold.
\begin{enumerate}
\item $\{x,y\}$ has the same unoriented intersection pattern as the curves in Figure \ref{figure:separate}.a, or
\item $x = (x')^L$, where $\{x',y\}$ has the same unoriented intersection pattern as
the curves in Figure \ref{figure:separate}.b.
\end{enumerate}
Thus $y$ is a separating curve separating $\Sigma_g$ into two subsurfaces $X_1$
and $X_2$ with $X_2 \cong \Sigma_{1,1}$ (see Figure \ref{figure:kills}.a).  Also,
$x$ is in $\Image(\pi_1(X_1) \rightarrow \pi_1(\Sigma_g))$.

\Figure{figure:kills}{KillS}{a. Figure needed to deal with elements of $\SSSS{1}$.
\CaptionSpace b. The curve $z$ can be realized by a simple closed nonseparating curve and
lies entirely in $X \subset \Sigma_g$.
\CaptionSpace c. $T_{\delta}^{e L}(w) = z^L w$.  The sign $e=\pm 1$ depends on the orientation
of $z$; with the orientation depicted in the figure, we have $e=1$.}

The group $\Mod_g^1(L)$ contains all inner automorphisms of $\pi_1(\Sigma_g)$, so
using \eqref{eqn:modsymmetry} we can assume that $v=0$.  We will produce
a subgroup $\Gamma < \Mod_g^1(L)$ with the following three properties.
\begin{enumerate}
\item $\HH_1(\Gamma;\Q) = 0$.
\item $\Gamma$ acts trivially on $\CLL{y}$.
\item $\Gamma$ contains the ``point-pushing'' mapping class corresponding to $x \in \pi_1(\Sigma_g)$.
\end{enumerate}
This is enough to prove the desired claim.  Indeed, since $\Gamma < \Mod_g^1(L)$, we have
$f(v')=v'$ for all $f \in \Gamma$.  This and item 2 imply that $\Gamma$ fixes $\CL{y}{v'}$.  There
is thus a map $j:\HH_1(\Gamma;\Q) \rightarrow \HH_1(\Mod_g^1(L);\CCC{g})$ corresponding
to the inclusion $\Gamma \hookrightarrow \Mod_g^1(L)$ and the map $\Q \hookrightarrow \CCC{g}$ taking
$1 \in \Q$ to $\CL{y}{v'}$.  By item 3, the image of $j$ contains $\phi(\CL{x}{v} \otimes \CL{y}{v'})$,
so by item 1 we have $\phi(\CL{x}{v} \otimes \CL{y}{v'})=0$, as desired.

We construct $\Gamma$ as follows.  Let $N$ be a regular neighborhood of $y$ and let $\delta_i$
be the boundary component of $N$ that is contained in $X_i$ (see Figure \ref{figure:kills}.a).
The curve $\delta_i$ separates $\Sigma_g$ into two components.  Let $Y_i$ be the component
of $\Sigma_g$ cut along $\delta_i$ that intersects $X_1$.  It follows that $Y_1 \cong \Sigma_{g-1,1}$
and $Y_2 \cong \Sigma_{g-1,1}^1$.  Denoting the mapping class group of $Y_i$ by $\Mod(Y_i)$, we 
have a Birman exact sequence
$$1 \longrightarrow \pi_1(\Sigma_{g-1,1}) \longrightarrow \Mod(Y_2) \longrightarrow \Mod_{g-1,1} \longrightarrow 1.$$
We have an isomorphism $\Mod_{g-1,1} \cong \Mod(Y_1)$, and this exact sequence splits via a map
that identifies $f \in \Mod_{g-1,1}$ with a corresponding element of $\Mod(Y_1)$ and then
extends $f$ by the identity to an element of $\Mod(Y_2)$.  Choosing such a splitting, we get
a decomposition
$$\Mod(Y_2) = \pi_1(\Sigma_{g-1,1}) \rtimes \Mod_{g-1,1}.$$
Define
$$\Gamma = \KKK{g-1,1} \rtimes \Mod_{g-1,1}(L) < \pi_1(\Sigma_{g-1,1}) \rtimes \Mod_{g-1,1}.$$
It is clear that $\Gamma$ contains the ``point-pushing'' mapping class corresponding
to $x \in \pi_1(\Sigma_g)$.  Also, the subgroup $\Mod_{g-1,1}(L) < \Gamma$ fixes $y \in \pi_1(\Sigma_g)$
and the subgroup $\KKK{g-1,1} < \Gamma$ acts on $y$ by conjugation, and thus fixes $\CLL{y}$.  We
deduce that $\Gamma$ acts trivially on $\CLL{y}$.

It remains to check that $\HH_1(\Gamma;\Q)=0$.  From its semidirect product decomposition,
we deduce that
$$\HH_1(\Gamma;\Q) \cong \HH_1(\Mod_{g-1,1}(L);\Q) \oplus (\HH_1(\KKK{g-1,1};\Q))_{\Mod_{g-1,1}(L)}.$$
Since $g \geq 4$, Theorem \ref{theorem:h1modl} implies that $\HH_1(\Mod_{g-1,1}(L);\Q) = 0$.  Again
using the fact that $g \geq 4$,
Lemma \ref{lemma:killcg} implies that $(\HH_1(\KKK{g-1,1};\Q))_{\Mod_{g-1,1}(L)}=0$, and we are done.

\begin{step}
$\phi(a \otimes b)=0$ if $(a,b) \in \SSSS{2}$.
\end{step}
Consider $(\CL{x}{v}, \CL{y,z^L}{v'}) \in \SSSS{2}$, so $z$ can be realized
by a simple closed nonseparating curve and $\{z\}$ and $\{x,y\}$ are strongly
essentially disjoint.  We can thus find subsurfaces $X$ and $X'$ of $\Sigma_g$
both of which contain the basepoint such that
$$z \in \Image(\pi_1(X) \rightarrow \pi_1(\Sigma_g)) \quad \quad \text{and} \quad \quad
\{x,y\} \subset \Image(\pi_1(X') \rightarrow \pi_1(\Sigma_g)).$$
and such that
$$\Sigma_g = X \cup X' \quad \quad \text{and} \quad \quad X \cap X' = \partial X = \partial X'.$$
Additionally, the surfaces $X$ and $X'$ can be chosen such that both have only one
boundary component.

As is shown in Figure \ref{figure:kills}.c, there exists
some $w \in \Image(\pi_1(X) \rightarrow \pi_1(\Sigma_g))$ together with an unbased simple closed curve $\delta$ 
in $X \subset \Sigma_g$ such that $T_{\delta}^{e L}(w) = z^L w$ for some $e = \pm 1$.  
Since $\delta \subset X$, we have
$T_{\delta}(x) = x$ and $T_{\delta}(y) = z$.  Also, since $T_{\delta}^{e L} \in \Mod_{g}^1(L)$,
it follows that $T_{\delta}^{e L}$ acts trivially on $v$ and $v'$.  This implies
that $T_{\delta}^{e L}$ takes $\CL{x}{v} \otimes \CL{y,w}{v'}$ to
\begin{align*}
\CL{x}{v} \otimes \CL{y,z^L w}{v'} &= \CL{x}{v} \otimes \CL{y,z^L}{v'} + \CL{x}{v} \otimes \CL{y,w}{v'+ L \cdot \CAB{z}} \\
&= \CL{x}{v} \otimes \CL{y,z^L}{v'} + \CL{x}{v} \otimes \CL{y,w}{v'}.
\end{align*}
The first calculation here uses Lemma \ref{lemma:commutatoridentities} and the second the fact that
$L \cdot \CAB{z} = 0$.  Since $\Mod_{g}^1(L)$ acts trivially on the image of $\phi$, we conclude that
$$\phi(\CL{x}{v} \otimes \CL{y,w}{v'}) = \phi(\CL{x}{v} \otimes \CL{y,w}{v'} + \CL{x}{v} \otimes \CL{y,z^L}{v'});$$
i.e.\ that $\phi(\CL{x}{v} \otimes \CL{y,z^L}{v'}) = 0$, as desired.

\noindent
{\raggedright
Andrew Putman\\
Department of Mathematics\\
Rice University, MS 136 \\
6100 Main St.\\
Houston, TX 77005\\ 
E-mail: {\tt andyp@rice.edu}}

\end{document}